\documentclass [article] {amsart}
\usepackage{amsfonts,amsthm,amsbsy,amsmath,amssymb}
\usepackage{latexsym,enumerate,enumitem}
\usepackage{tikz-cd}
\usetikzlibrary{positioning}
\usepackage{xcolor}
\usepackage{mathrsfs}
\usepackage{mathabx}

\numberwithin{equation}{section}

\newtheorem{theorem}{Theorem}[section]
\newtheorem{corollary}[theorem]{Corollary}
\newtheorem{lemma}[theorem]{Lemma}
\newtheorem{proposition}[theorem]{Proposition}

\theoremstyle{definition}
\newtheorem{remark}[theorem]{Remark}

\newtheorem*{theorem*}{Theorem}
\newtheorem*{lemma*}{Lemma}
\newtheorem*{prop*}{Proposition}
\newtheorem*{corollary*}{Corollary}

\def\wh{\widehat}

\usepackage{hyperref}
\hypersetup{colorlinks, breaklinks,
	linkcolor = blue,
	urlcolor = blue,
	anchorcolor = blue,
	citecolor = blue}

\usepackage{cleveref}

\def\N{{\mathbb N}}

\def\R{{\mathbb R}}
\def\Z{{\mathbb Z}}

\def\S{\mathbb{S}}

\def\a{\alpha}
\def\b{\beta}

\def\d{\delta}

\def\h{\eta}

\DeclareFontFamily{U}{mathx}{\hyphenchar\font45}
\DeclareFontShape{U}{mathx}{m}{n}{
	<5> <6> <7> <8> <9> <10>
	<10.95> <12> <14.4> <17.28> <20.74> <24.88>
	mathx10
}{}
\DeclareSymbolFont{mathx}{U}{mathx}{m}{n}
\DeclareFontSubstitution{U}{mathx}{m}{n}
\DeclareMathAccent{\widecheck}{0}{mathx}{"71}
\DeclareMathAccent{\wideparen}{0}{mathx}{"75}

\begin{document} 

\title{Rough averages of triangular Hilbert transforms}

\author{Fred Yu-Hsiang Lin}
\address{Department of Mathematics, Purdue University, Mathematical Sciences Bldg, 150 N University St, West Lafayette, IN 47907 USA}
\email{lin2311@purdue.edu}

\author{Lenka~Slav{\'i}kov{\'a}}
\address{Department of Mathematical Analysis, Faculty of Mathematics and Physics, Charles University, Sokolovsk\'a 83, 186 75 Praha 8, Czech Republic}
\email{slavikova@karlin.mff.cuni.cz}

\subjclass[2020]{42B15, 42B20, 42B25}
\keywords{Triangular Hilbert transform, twisted paraproduct, rough singular integral operator, smoothing inequality}

\begin{abstract}
We study a bilinear singular integral operator obtained by taking rough averages of certain directional variants of the triangular Hilbert transform. This operator can be interpreted as the twisted paraproduct with a rough homogeneous kernel. Under a balance condition on the $L^q$ integrability of the kernel on the unit sphere, we establish a range of $L^{p_1} \times L^{p_2} \rightarrow L^p$ bounds for this operator. Our results are optimal within the known boundedness range for the twisted paraproduct with a smooth kernel. 

\end{abstract}

\maketitle
\section{Introduction}

The \textit{triangular Hilbert transform} is a bilinear singular integral operator of the form
\[
H(f,g)(x,y)=\operatorname{p.v.} \int_{\R} f(x+t,y) g(x,y+t)\frac{dt}{t}, \quad (x,y)\in \R^2,
\]
initially defined for a pair of two-dimensional Schwartz functions $f$ and $g$. It is a major open problem whether the operator $H$ extends to a bounded operator from $L^{p_1}(\R^2) \times L^{p_2}(\R^2)$ into $L^p(\R^2)$ for some exponents $p_1$, $p_2$ and $p$. Such a result, if available, would unify and extend various prominent results in harmonic analysis, including Carleson's theorem \cite{Carleson1966,Fefferman1973,LaceyThiele2000} on the pointwise almost everywhere convergence of Fourier series of $L^p$ functions, as well as uniform bounds for the family of directional bilinear Hilbert transforms \cite{Thiele2002,GrafakosLi2004,Li2006,UraltsevWarchalski2022}. Partial progress towards the conjectured bounds for the triangular Hilbert transform was obtained by Kova\v{c}, Thiele and Zorin-Kranich~\cite{KTZ}, who studied the trilinear form associated with the dyadic triangular Hilbert transform and proved $L^p$ bounds for it under the assumption that one of the three input functions takes a special form. Additionally, cancellation estimates for the truncated triangular Hilbert transform were established by Zorin-Kranich~\cite{Z} and Durcik, Kova\v{c} and Thiele~\cite{DKT}. 

In this article, we will focus on a family of directional operators related to the triangular Hilbert transform. For any $\theta=(\theta_1,\theta_2) \in \R^2 \setminus \{(0,0)\}$, the corresponding operator $H_\theta$ acts on a pair of two-dimensional Schwartz functions $f$ and $g$ as
\[
H_\theta(f,g)(x,y)=\operatorname{p.v.} \int_{\R} f(x+\theta_1 t,y) g(x,y+\theta_2 t)\frac{dt}{t}, \quad (x,y)\in \R^2.
\]
When $\theta_1=0$, the operator $H_\theta$ reduces to the product of the function $f$ with the Hilbert transform applied to the function $g$ in the second variable. Consequently, $H_\theta$ extends to a bounded operator from $L^{p_1}(\R^2) \times L^{p_2}(\R^2)$ into $L^p(\R^2)$ whenever $1<p_1,p_2<\infty$, $0<p<\infty$ and $1/p_1+1/p_2=1/p$. A similar discussion applies to the case when $\theta_2=0$. Conversely, in the generic case when $\theta_1\neq 0$ and $\theta_2 \neq 0$, the problem of the $L^{p_1}(\R^2) \times L^{p_2}(\R^2) \rightarrow L^p(\R^2)$ boundedness of $H_\theta$ is equivalent to the same question for the triangular Hilbert transform $H=H_{(1,1)}$ and, as such, it remains open. 

In the absence of any $L^p$ bounds for the operators $H_\theta$, it is of interest to investigate whether we can bound at least a suitable superposition of such operators. Specifically, we will study averaged operators of the form
\begin{equation}\label{E:average}
A_\Omega(f,g)=\int_{\S^1} \Omega(\theta) H_\theta(f,g)\,d\sigma(\theta),
\end{equation}
where $\S^1$ denotes the unit sphere in $\R^2$ and $\Omega$ is a suitable function on $\S^1$. Since $H_{-\theta}=-H_\theta$ for any $\theta \in \S^1$, there is no loss of generality in assuming that $\Omega$ is odd. Setting formally $\Omega=\frac{1}{2}\delta_{\theta}-\frac{1}{2} \delta_{-\theta}$, where $\delta_z$ denotes the Dirac delta measure at the point $z$, we recover the original directional operator $H_\theta$. In this article, we establish $L^p$ estimates for the operator $A_\Omega$ under the assumption that $\Omega$ is an odd function belonging to $L^q(\S^1)$ for some $q>1$. Before stating our main result in detail, we discuss relevant known results to put our work into context.

For a suitable two-dimensional singular integral kernel $K$ and a pair of two-dimensional Schwartz functions $f$ and $g$, let us consider the operator 
\begin{equation}\label{E:twisted-paraproduct}
T_K(f,g)(x,y)=\operatorname{p.v.} \int_{\R^2} f(s,y)g(x,t)K(x-s,y-t) dsdt, \quad (x,y)\in \R^2.
\end{equation}
The operator $T_K$ is usually referred to as the \textit{twisted paraproduct}. Given a positive integer $L$, we say that $K$ is a \textit{Calder\'on-Zygmund kernel of order $L$} if it is a tempered distribution whose Fourier transform satisfies the smoothness condition 
\begin{equation}\label{E:sigma}
|\partial^{\alpha} \wh{K}(\xi,\eta)| \leq |(\xi,\eta)|^{-|\alpha|}, \quad (\xi,\eta)\in \R^2 \setminus \{(0,0)\},
\end{equation}
for all multiindices $\alpha$ such that $|\alpha|\leq L$. It follows from the results of Kova\v{c}~\cite{Kovac} that whenever $1<p_1,p_2<\infty$, $0<p<2$ satisfy $1/p_1+1/p_2=1/p$, then there are positive constants $C$ and $L$ such that for all Calder\'on-Zygmund kernels $K$ of order $L$, the following estimate holds:
\begin{equation}\label{E:kovac}
\|T_K(f,g)\|_{L^{p}(\mathbb{R}^{2})}\leq C \|f\|_{L^{p_{1}}(\mathbb{R}^{2})}\|g\|_{L^{p_{2}}(\mathbb{R}^{2})}\, .
\end{equation}
It remains an open problem whether inequality~\eqref{E:kovac} is valid also for some $p\geq 2$.

To connect the two operator families $A_\Omega$ and $T_K$, we observe that (a multiple of) $A_\Omega$ can be written in the form~\eqref{E:twisted-paraproduct}, where
\begin{equation}\label{E:kernel-omega}
K(x,y) = \dfrac{\Omega\left((x,y)/|(x,y)|\right)}{|(x,y)|^2}.
\end{equation}
The kernel $K$ from~\eqref{E:kernel-omega} is not a Calder\'on-Zygmund kernel unless we impose additional smoothness assumptions on the function $\Omega$. A prototypical example of an operator of the form as in~\eqref{E:twisted-paraproduct} and~\eqref{E:kernel-omega} which does not fall into the Calder\'on-Zygmund framework of~\cite{Kovac} is the operator
\begin{equation}\label{E:B}
B(f,g)(x,y)=\int_0^1 H_{(\alpha,1)}(f,g)(x,y) \,d\alpha \, ,
\end{equation}
studied by Durcik and Roos in~\cite{DR2021}. The operator $B$ is associated with the odd function
\[
\Omega(\theta_1,\theta_2)=\frac{\chi_{(0,\infty)}(\theta_2)-\chi_{(0,\infty)}(\theta_2-\theta_1)}{\theta_1^2}, \quad (\theta_1,\theta_2)\in \S^1,
\]
which is of bounded variation on $\S^1$. The corresponding kernel~\eqref{E:kernel-omega} is the kernel of Calder\'on's first commutator. 

In the present paper, we study boundedness properties of the twisted paraproduct~\eqref{E:twisted-paraproduct} associated with a kernel of the form~\eqref{E:kernel-omega}, where $\Omega$ is a function with vanishing integral on $\S^1$ which belongs to the space $L^q(\S^1)$ for some $q>1$. With a slight abuse of notation, this operator will be denoted by $T_\Omega$. Boundedness properties of the operator $A_\Omega$ from~\eqref{E:average} follow by specializing our results to the case when $\Omega$ is odd. 

\begin{theorem}\label{T:main-banach}
    Let $1<p_{1},p_{2}<\infty$, $1\leq p<2$ satisfy 
    $\frac{1}{p}=\frac{1}{p_{1}}+\frac{1}{p_{2}}$, and let $q>1$. Suppose that $\Omega \in L^{q}(\mathbb{S}^{1})$ fulfills $\int_{\S^{1}}\Omega(\theta)d\sigma (\theta)=0$. Then the operator $T_\Omega$ admits a bounded extension from $L^{p_1}(\R^2) \times L^{p_2}(\R^2)$ into $L^p(\R^2)$.
\end{theorem}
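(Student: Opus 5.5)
Our plan follows the Duoandikoetxea--Rubio de Francia strategy for rough singular integrals, combined with Kova\v{c}'s bounds~\eqref{E:kovac} for smooth twisted paraproducts and a smoothing (decay) estimate that produces geometric gains.

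\emph{Decomposition.} Fix a radial Littlewood--Paley partition $1=\sum_{j\in\Z}\psi(2^{-j}\cdot)$ in physical space, with $\psi$ supported in $\{1/2\le|z|\le 2\}$. Write $K=\sum_j K_j$, where $K_j(z)=\Omega(z/|z|)|z|^{-2}\psi(2^{-j}z)$. Next take a frequency Littlewood--Paley decomposition $P_k$ in $\R^2$, and set
\[
K=\sum_{s\in\Z} \mathcal{K}_s,\qquad \mathcal{K}_s=\sum_j P_{-j+s}K_j .
\]
The vanishing integral of $\Omega$ gives $|\wh{K_j}(\zeta)|\lesssim |2^j\zeta|$ for small frequencies. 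The rough $L^q$ bound, via the standard Duoandikoetxea--Rubio de Francia argument (stationary phase on each ray, then H\"older in $\theta$), gives $|\wh{K_j}(\zeta)|\lesssim \|\Omega\|_{L^q}|2^j\zeta|^{-\delta}$ for some $\delta=\delta(q)>0$. Hence $\mathcal{K}_s$ for $s\le 0$ is a Calder\'on--Zygmund kernel of every order with constants $\lesssim 2^{-c|s|}$, up to the usual convolution of smooth bumps. For $s>0$, $\mathcal{K}_s$ has Fourier transform bounded by $2^{-\delta s}$, but its derivatives grow like $2^{Ls}$ when measured at the CZ scale.

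\emph{Two estimates for each piece.} For each $s$ we prove:
\begin{enumerate}[label=(\roman*)]
\item a crude bound $\|T_{\mathcal{K}_s}\|_{L^{p_1}\times L^{p_2}\to L^p}\lesssim 2^{Cs}$ (at worst polynomial in $s$) for all exponents in the stated range;
\item a decaying bound $\|T_{\mathcal{K}_s}\|_{L^{2}\times L^{2}\to L^1}\lesssim 2^{-\epsilon s}$, or alternatively at some interior point.
\end{enumerate}
For (i), we apply~\eqref{E:kovac} to $2^{-Ls}\mathcal{K}_s$, which satisfies~\eqref{E:sigma} of order $L$. A better route is to observe that $\mathcal{K}_s$ is a Calder\'on--Zygmund kernel of order $L$ with constant $\lesssim s$ at the rescaled level. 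Either way we get growth at most $2^{Ls}$.

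For (ii), we use the twisted paraproduct structure. In frequency, $T_K(f,g)$ pairs $\wh f(\xi,\cdot)$ and $\wh g(\cdot,\eta)$ with the symbol $\wh K(\xi,\eta)$. At the point $(2,2,1)$ one can run Kova\v{c}'s telescoping/positivity argument with the symbol's size $\sup|\wh{\mathcal{K}_s}|\lesssim 2^{-\delta s}$ as the main input. The derivative losses then enter only logarithmically. This is the ``smoothing inequality'' alluded to in the keywords: a bound for the trilinear form with a bump multiplier at frequency scale $2^{-j+s}$ against kernel scale $2^j$, losing only $s^{C}$ and gaining $2^{-\delta s}$.

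\emph{Conclusion.} Interpolating (i) and (ii) via multilinear (complex or real) interpolation yields $\|T_{\mathcal{K}_s}\|\lesssim 2^{-\epsilon' s}$ throughout the open range $1\le p<2$, after shrinking $\epsilon'$ near the boundary. Summing over $s$ then gives boundedness of $T_\Omega$. For $p=1$ we interpolate within the Banach range, which is fine since $L^1$ is normed.

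The main obstacle is (ii): obtaining a single estimate with exponential decay in $s$ that does not depend on the (large) smoothness constants of $\mathcal{K}_s$. Kova\v{c}'s method uses order-$L$ regularity essentially. I would first try to prove an $L^2\times L^2\to L^1$ bound for the twisted paraproduct with a symbol $m$ that is, at each dyadic scale, a bump of width $2^{-s}$ times the scale. This would control the operator by $\|m\|_\infty^{\theta}$ times a polynomial in $s$. The approach would be to reduce to Kova\v{c}'s Gaussian-type local forms by expanding $m$ into $O(2^{2s})$ translated bumps, and then use the $L^\infty$ smallness through an orthogonality (Cauchy--Schwarz in the frequency parameter) argument.
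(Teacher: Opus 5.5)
Your architecture (low frequencies by Kova\v{c}, high frequencies by interpolating a decaying $L^2\times L^2\to L^1$ bound against crude bounds) matches the paper's. However, the step that carries the proof, the decaying bound (ii), is left open, and neither route you suggest can produce it. Kova\v{c}'s telescoping/positivity argument consumes the order-$L$ regularity \eqref{E:sigma} and yields nothing in terms of $\sup|m|$. Expanding into bumps and using orthogonality in $\xi$ and $\eta$ only recovers the trivial bound. Indeed, if $m$ is supported in $\{|(\xi,\eta)|\le\lambda\}$, then $|\mathcal T_m(F,G)(x,y)|\le\|m\|_{L^\infty}\|F(\cdot,y)\|_{L^1(-\lambda,\lambda)}\|G(x,\cdot)\|_{L^1(-\lambda,\lambda)}$, and Cauchy--Schwarz on the fibers gives $\|\mathcal T_m(F,G)\|_{L^1}\lesssim\lambda\|m\|_{L^\infty}\|F\|_{L^2}\|G\|_{L^2}$. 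The rough symbol only decays like $\lambda^{-\delta}$ with $\delta<1/q'\le 1$, so this gives no decay at the balanced frequencies $|\xi|\approx|\eta|\approx\lambda$. What is needed is a genuine smoothing inequality that beats the exponent $1$. The paper uses the Hsu--Lin theorem (Theorem~\ref{thm_universal_multiplier_bound}), which gives $\lambda^{3/5}\|m\|_{L^\infty}$ (Corollary~\ref{C:corollary-multiplier}) and hence decay for $q>5/2$. Smaller $q$ follow by Riesz--Thorin in $\Omega$ against the trivial $\|\Omega\|_{L^1(\S^1)}$ bound, and the cutoff $1_{[0,1]^2}$ is removed by a commutator argument.

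Two further gaps remain even granting that. First, your isotropic pieces $\mathcal K_s$ contain, at each scale, the region where (after normalization) $|\xi|\approx 2^s$ but $|\eta|\lesssim 1$. There $g$ enters through low-pass projections that are not orthogonal across scales, and the symbol gives no decay in the dyadic size of $|\eta|$. So the scales cannot be summed by Cauchy--Schwarz: this is a genuine twisted paraproduct. The paper therefore splits anisotropically (diagonal versus off-diagonal in $(k_1,k_2)$) and handles the off-diagonal part by Kova\v{c}'s theorem for shifted twisted paraproducts. The large shift $2^js'$ is moved onto $f$ at logarithmic cost, and the unscaled shift $t'$ costs $(1+|t'|)^L$ (Lemma~\ref{L:shifted-twisted-paraproduct}). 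The decay then comes from the weighted $L^1$ norm of $K_j^{0,2}$ (Lemma~\ref{L:integral-estimate}), not from $\sup|\wh{K_j^{0,2}}|$. Second, your crude bound (i) grows like $2^{Ls}$. The constants in \eqref{E:sigma} for $\mathcal K_s$ are in general exponentially large in $s$ (up to $2^{(|\alpha|-\delta)s}$), not $O(s)$. What is $O(s)$ is a H\"ormander-type integral condition on the kernel, which is not a hypothesis of Kova\v{c}'s theorem. Interpolating $2^{Ls}$ against $2^{-\epsilon s}$ yields decay only in an $O(\epsilon/L)$-neighbourhood of $(1/2,1/2)$, not on the whole range $1\le p<2$. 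You need a crude bound that is polynomial in $s$. For the diagonal part the paper gets $j^3$ from shifted square functions (Lemma~\ref{L:shifted}) without using Kova\v{c}, while the off-diagonal bound $j\,2^{-\delta j}$ holds directly throughout the range.
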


It is known that the triangular Hilbert transform is never bounded from $L^{p_1}(\R^2) \times L^{p_2}(\R^2)$ into $L^1(\R^2)$, see \cite[Appendix A.3]{KTZ} and \cite[Theorem 1.4]{L2025}. Nevertheless, Theorem~\ref{T:main-banach} shows that the averaged operator $A_\Omega$ remains bounded as long as $\Omega \in L^q(\S^1)$ for some $q>1$. In our second main result, we show that the operator $A_\Omega$ can be bounded from $L^{p_1}(\R^2) \times L^{p_2}(\R^2)$ into $L^p(\R^2)$ even for $p<1$, but more restrictive assumptions on the function $\Omega$ are required in this case. 

\begin{theorem}\label{T:main}
    Let $1<p_{1},p_{2}<\infty$, $0<p<1$ satisfy 
    $\frac{1}{p}=\frac{1}{p_{1}}+\frac{1}{p_{2}}$, and let $q>1$. Then the following conditions are equivalent.

    \textup{(i)} $\frac{1}{p}+\frac{1}{q}<2$;

    \textup{(ii)} the operator $T_\Omega$ admits a bounded extension from $L^{p_1}(\R^2) \times L^{p_2}(\R^2)$ into $L^p(\R^2)$ for every function $\Omega \in L^{q}(\mathbb{S}^{1})$ with $\int_{\S^{1}}\Omega(\theta)d\sigma (\theta)=0$.
\end{theorem}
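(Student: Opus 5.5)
The plan is to prove the two implications separately: (i)$\Rightarrow$(ii) by a decomposition that sums in $L^p$ quasi-norm (with $p<1$), and (ii)$\Rightarrow$(i) by an explicit counterexample.

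\textbf{(i)$\Rightarrow$(ii).} I would decompose the kernel $K$ from~\eqref{E:kernel-omega} dyadically in space and in angular frequency. Write $K=\sum_{j\in\Z}K_j$ with $K_j(x)=K(x)\phi(2^{-j}x)$. Then decompose $\Omega$ in Littlewood--Paley pieces on $\S^1$ of angular frequency $2^k$, $k\ge0$, so that $K=\sum_{k\ge0}K^{(k)}$. For each $k$, the piece $K^{(k)}$ is essentially a Calder\'on--Zygmund kernel of order $L$ after normalization. Its constant in~\eqref{E:sigma} is $\lesssim 2^{Lk}\|\Omega\|_{L^1}$. Applying Kova\v{c}'s bound~\eqref{E:kovac} then gives
\[
\|T_{K^{(k)}}\|_{L^{p_1}\times L^{p_2}\to L^p}\lesssim 2^{Ck}\|\Omega\|_{L^q}.
\]
The main step is the complementary decay estimate. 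Here I would use the smoothing inequality technique adapted to the twisted paraproduct. The aim is a bound $\lesssim 2^{-\epsilon k}\|\Omega\|_{L^q}$ at a Banach point, for instance $L^2\times L^2\to L^1$ or the range of Theorem~\ref{T:main-banach}, which can be taken as established earlier. Interpolating the two bounds gives decay in $k$ for triples near the Banach range. At the same time the growth must be controlled at $p<1$.

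The real constraint at $p<1$ comes from the singularities of $\Omega$. Split $\Omega=\sum_m\Omega_m$ by level sets $|\Omega|\sim 2^m$, or equivalently into pieces supported on arcs $I$ of length $\delta$ with $|\Omega|\lesssim \delta^{-1/q}\|\Omega\|_q$. The piece localized to an arc of length $\delta$ behaves like an average of $H_\theta$ over a $\delta$-sector. By scaling and the known quasi-Banach bounds (for instance localized versions of Theorem~\ref{T:main-banach}), I expect its norm into $L^p$ to be $\lesssim \delta^{1-1/q}\,\delta^{-(1/p-1)}\cdot(\text{mild loss})$. This is summable over dyadic $\delta$, using $\|\sum f_i\|_p^p\le\sum\|f_i\|_p^p$, exactly when $1/p+1/q<2$. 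The vanishing integral condition is needed only for the global piece, and it is handled by Theorem~\ref{T:main-banach}'s argument. I expect the main obstacle to be this sector-localized quasi-Banach estimate with the sharp power $\delta^{1-1/p}$. To prove it, I would first try rescaling the sector to unit width through an anisotropic dilation. This turns the operator into a twisted paraproduct with a smooth kernel, and Kova\v{c}'s bound then applies uniformly. The power of $\delta$ is then tracked through the change of variables, which acts separately on $f(\cdot,y)$ and $g(x,\cdot)$.

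\textbf{(ii)$\Rightarrow$(i).} Suppose $1/p+1/q\ge2$. By the closed graph or uniform boundedness principle, (ii) would give $\|T_\Omega\|\lesssim\|\Omega\|_{L^q}$. I would then test on $\Omega_\delta=\delta^{-1}(\chi_{I_\delta}-\chi_{-I_\delta})$ around a generic direction $\theta_0$, which satisfies $\|\Omega_\delta\|_q\sim\delta^{-1+1/q}$. Take $f,g$ to be indicators adapted so that $H_\theta(f,g)$ is of size $\sim\log$ or $\sim1$ on a set whose measure makes the $L^p$ norm behave like $\delta^{-1/p+1}$ relative to $\|f\|_{p_1}\|g\|_{p_2}$. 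This mimics the $L^1$ counterexample of \cite{KTZ}: $f,g$ are supported on thin rectangles, and the output spreads along a long region. The resulting inequality $\delta^{1-1/p}\lesssim\delta^{1/q-1}$ fails as $\delta\to0$ when $1/p+1/q>2$. The endpoint $1/p+1/q=2$ requires a logarithmic gain, which I would obtain by summing such examples over dyadic scales with a lacunary $\Omega=\sum_j \Omega_{2^{-j}}/j$ type construction.
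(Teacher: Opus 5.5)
The gap is in (i)$\Rightarrow$(ii), at the step you flag as the main obstacle, and the mechanism you propose for it cannot work.

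The only dilations compatible with the twisted paraproduct structure are $(x,y)\mapsto(ax,by)$. On directions they act by $\tan\theta\mapsto(b/a)\tan\theta$, whose derivative at $\theta$ is at most $1/|\sin 2\theta|$. So a $\delta$-sector around a direction away from the coordinate axes can never be stretched to unit width.

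Worse, since $\frac{1}{p}=\frac{1}{p_1}+\frac{1}{p_2}$, these dilations preserve exactly both the $L^{p_1}\times L^{p_2}\to L^p$ norm and $\|\Omega\|_{L^1(\S^1)}$. There is therefore no power of $\delta$ to be ``tracked through the change of variables''. Any rescaling argument ending in Kova\v{c}'s bound would control $L^1$-normalized $\delta$-sector averages uniformly in $\delta$, and in the limit would bound $H_{\theta_0}$ itself. That contradicts the lower bound $\gtrsim\delta^{1-1/p}$ which your own converse example gives for $p<1$.

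So the exponent $\delta^{1-1/q}\delta^{1-1/p}$ you predict is the right one, but nothing in the proposal proves it. Note also that the sector pieces are merely bounded, not smooth, and that Theorem~\ref{T:main-banach} gives nothing for $p<1$.

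The bookkeeping around this step needs repair as well. A level set $\{|\Omega|\sim 2^m\}$ need not be a single arc. Cutting it into $M$ short arcs and summing with $\|\sum_i F_i\|_{L^p}^p\le\sum_i\|F_i\|_{L^p}^p$ loses a factor that grows with $M$. In addition, each piece has nonzero mean, so $T_{\Omega_m}$ is not even defined until the means are subtracted, and after that the pieces are no longer localized.

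What is missing is a weak-type endpoint below $p=1$ whose constant grows only polynomially in a frequency parameter, and in which the $L^q$ integrability of $\Omega$ is used directly. The $2^{Ck}$ growth from Kova\v{c} is why your first step stalls near the Banach range. The paper proceeds in three steps:
\begin{enumerate}
\item It splits $T_\Omega=\sum_j T_j$ according to the frequency of $\widehat{K^0}$ and proves $2^{-\delta j}$ decay for $1<p<2$ (Corollary~\ref{P:interpolation-banach}).
\item It performs a Calder\'on--Zygmund decomposition of $f(\cdot,y)$ on each horizontal fiber (Lemma~\ref{pr:fiberCZ}), obtaining $L^1\times L^{p_0}\to L^{\frac{p_0}{p_0+1},\infty}$ bounds of size $O(j)$. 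There the roughness of $\Omega$ is absorbed by H\"older's inequality in $t$,
\[
\int_{-2}^{2}|K^0(s,t)|\,\mathcal{M}^{(2)}g(x,y-2^{\ell}t)\,dt\lesssim\Big(\int_{-2}^{2}|K^0(s,t)|^{q}\,dt\Big)^{\frac{1}{q}}\,\mathcal{M}^{(2)}_{q'}\mathcal{M}^{(2)}g(x,y),
\]
and $\mathcal{M}^{(2)}_{q'}h=(\mathcal{M}^{(2)}|h|^{q'})^{1/q'}$ is bounded on $L^{p_0}$ precisely when $q>p_0'$. With $p_0=p/(1-p+\varepsilon)$ this reads $\frac{1+\varepsilon}{p}+\frac{1}{q}<2$, which is exactly where (i) enters.
\item Bilinear Marcinkiewicz interpolation between the decaying Banach bound and these weak endpoints gives summable bounds at $(p_1,p_2,p)$.
\end{enumerate}

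Your converse direction is fine in outline. The paper avoids the closed graph theorem by writing down one explicit odd $\Omega$ with a logarithmically corrected singularity $|\theta_1-\theta_2|^{-1/q}$ at the diagonal. One correction: the pieces of your lacunary sum must be $L^q$-normalized, with coefficients in $\ell^q\setminus\ell^p$ such as $1/j$. As written, $\sum_j\Omega_{2^{-j}}/j$ with $\Omega_\delta=\delta^{-1}(\chi_{I_\delta}-\chi_{-I_\delta})$ is not even in $L^1(\S^1)$.
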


The proofs of Theorems~\ref{T:main-banach} and~\ref{T:main} build on the techniques developed in~\cite{GHH,GHS,HP,DS2024} in connection with the study of the more standard Coifman-Meyer type bilinear singular integral operators with rough kernels. These techniques are then combined with known results on the boundedness properties of the twisted paraproduct with a Calder\'on-Zygmund kernel~\cite{Kovac} and with examples showing the unboundedness of the triangular Hilbert transform for $p=1$~\cite{KTZ,L2025}.
In addition, inspired by the recent preprint~\cite{BSpreprint} by Bhojak and Shrivastava, we investigate the methodological connections between our problem and the problem of the boundedness of the triangular Hilbert transform along curves~\cite{CDR2021,HLpreprint}.

To be more specific, the first main ingredient of our proofs, which is used to handle the ``high-frequency'' part of the operator $T_\Omega$, is a bilinear Fourier multiplier theorem developed in \cite{HLpreprint}. Originally, this multiplier theorem was introduced to prove smoothing inequalities for certain bilinear operators related to the triangular Hilbert transform along non-flat curves. Alternatively, bounds for the high-frequency part of $T_\Omega$ can be obtained via the wavelet decomposition technique of Grafakos, He and Honz\'ik~\cite{GHH}. We do not pursue the latter approach here but we briefly comment on it in Section~~\ref{S:smoothing} below.

The broader study of multilinear smoothing inequalities has recently gained considerable attention. For instance, a smoothing inequality for a bilinear averaging operator acting on one-dimensional functions and associated with certain curves was introduced by Christ~\cite{CZ2024}. Subsequently, Christ, Durcik and Roos~\cite{CDR2021} obtained a smoothing inequality for a bilinear averaging operator acting on two-dimensional functions and related to the triangular Hilbert transform along a parabola. Their approach relied on local Fourier series expansions, which in turn reduced the problem to sublevel set estimates. This strategy was also employed by Bhojak and Shrivastava in \cite{BSpreprint}, where the authors established a new smoothing inequality in order to provide an alternative proof of the boundedness of one-dimensional rough bilinear singular integrals of Coifman-Meyer type. 
In contrast, the framework used to prove the smoothing inequality in \cite{HLpreprint} is inspired by the LGC method developed by Gaitan and Lie \cite{GL2025}. A comparative analysis of these two distinct methodologies and of their relationship to the wavelet decomposition method from~\cite{GHH} remains a topic of independent interest.

The second main ingredient of our proofs is the realization that bounds for the ``mid-frequency'' part of the operator $T_\Omega$ can be deduced from logarithmic estimates for the family of \textit{shifted twisted paraproducts}, and that these estimates are in fact a direct consequence of bounds for the twisted paraproduct with a Calder\'on-Zygmund kernel. To be more specific, the family of shifted twisted paraproducts consists of operators of the form~\eqref{E:twisted-paraproduct} associated with the two-parameter family of kernels
\begin{equation}\label{E:kuv}
K^{u,v}(s,t)=\sum_{k\in \Z} 2^{-2k} \phi(2^{-k}s-u) \rho(2^{-k}t-v),
\end{equation}
where $(u,v)\in \R^2$ and $\phi$, $\rho$ are Schwartz functions with compact Fourier support such that $\wh{\phi}$ is supported away from the origin.
In order to establish Theorem~\ref{T:main-banach} for a tuple of exponents $(p_1,p_2,p)$, we need the following logarithmic-in-$u$ and polynomial-in-$v$ bound for the shifted operators:
\begin{equation}\label{E:logarithmic-blowup}
\|T_{K^{u,v}}(f,g)\|_{L^p(\R^2)} \leq C \log(2+|u|) (1+|v|)^{L}\|f\|_{L^{p_1}(\R^2)} \|g\|_{L^{p_2}(\R^2)}.
\end{equation}
By making use of the standard shifted square function estimates~\cite{Muscalu}, we show that~\eqref{E:logarithmic-blowup} holds whenever the twisted paraproduct is $L^{p_1}(\R^2) \times L^{p_2}(\R^2) \rightarrow L^{p}(\R^2)$ bounded for all Calder\'on-Zygmund kernels of order $L$ with norm depending only on $p_1$, $p_2$ and $L$. Consequently, if the available estimates for the twisted paraproduct with a Calder\'on-Zygmund kernel could be extended to a larger range of exponents, we would immediately get an extension of Theorem~\ref{T:main-banach} to the same range of exponents. A similar observation was made in~\cite{DR2021} in connection with the operator~\eqref{E:B}. 

We expect that the estimate~\eqref{E:logarithmic-blowup} can be improved from polynomial-in-$v$ to logarithmic-in-$v$. Such a result, however, is not needed for our applications, and we thus do not pursue it here. We point out that estimates for certain shifted quadrilinear
forms related to the shifted twisted paraproduct are the stepping stones towards proving quantitative convergence results for ergodic averages with respect to two commuting transformations, see the work by Durcik, Kova\v{c},  \v{S}kreb  and Thiele on double ergodic averages~\cite{DKST} and by Durcik and \v{S}kreb~\cite{DS2026} on cubic ergodic averages. We also direct the reader to the preprint~\cite{DBpreprint} by Becker and Durcik on the shifted bilinear Hilbert transform and its connections to double recurrence ergodic averages. The shifted estimates most related to ours are those from ~\cite{DS2026}; we refer to Section 4 for further details.

\medskip
A detailed overview of the proofs of Theorems~\ref{T:main-banach} and~\ref{T:main} is provided in Section~\ref{S:overview}. In that section, we decompose the operator $T_\Omega$ into various pieces and we formulate a number of propositions to control these pieces. The proofs of the corresponding propositions can then be found in Sections~\ref{S:smoothing}--\ref{S:counterexample}.

\subsection*{Acknowledgements}
We thank Georgios Dosidis for his work on this project in its early stages, in particular for drafting preliminary versions of the proofs of Lemmas~\ref{L:shifted} and~\ref{pr:fiberCZ} and for investigating the wavelet decomposition approach towards Proposition~\ref{P:smoothing-diagonal}. We also thank Ankit Bhojak, Polona Durcik and Saurabh Shrivastava for helpful discussions. The second author was partially supported by the grant no.\ 26-21107S of the Czech Science Foundation.


\section{Overview of the proof}\label{S:overview}

Let us fix a $q$ satisfying $1 < q \leq \infty$ and a function $\Omega\in L^q(\S^{1})$ with mean value zero. Let $K$ be the kernel given by~\eqref{E:kernel-omega}.
We fix a smooth function $\a$ in $\R^+_0$ such that $\a(t) = 1$ for $t \in [0,1]$, $0 < \a(t) < 1$ for $t \in (1,2)$ and $\a(t) = 0$ for $t \geq 2$. For $(y,z) \in\R^2$ and $\ell\in\Z$ we introduce the function
$$\b^\ell(y,z) = \a\left(2^{-\ell}|(y,z)|\right) - \a\left(2^{-\ell+1}|(y,z)|\right).$$
Since $\sum_{\ell \in \mathbb Z} \beta^\ell(y,z)=1$ whenever $(y,z) \neq (0,0)$, we may decompose the kernel $K$ as 
\[
K=\sum_{\ell\in \Z} K^\ell,
\]
where $K^\ell = \b^\ell K$. By the homogeneity of $K$, we have $K^\ell(y,z)=2^{-2\ell}K^0(2^{-\ell}(y,z))$.

Let $0 < \d < 1/q'$, where $q'$ is the H\"older conjugate of $q$, i.e.\ $1/q+1/q'=1$. We recall that the symbol $\wh{K^0}$ satisfies the decay estimate
\begin{equation}\label{E:decay-K0}
|\widehat{K^0}(\xi,\h)|\leq C\|\Omega\|_{L^q(\S^1)}|(\xi,\h)|^{-\d},
\end{equation}
see, e.g., \cite[Lemma 8.20]{Duo}. We also point out that this estimate in fact holds for any function $\Omega \in L^q(\S^1)$, without the need to require the mean zero condition. 

We proceed by decomposing the symbol $\wh{K^0}$. Let $\psi$ be a smooth function on $\R$ whose Fourier transform is given by
\[
\wh{\psi}(\xi)=\alpha(|\xi|)-\alpha(2|\xi|), \quad \xi \in \R.
\]
Then $\wh{\psi}$ is supported in $[-2,-1/2] \cup [1/2,2]$ and satisfies $\sum_{k\in \Z} \wh{\psi}(2^{-k} \xi)=1$ whenever $\xi \neq 0$. We write
\begin{equation*}
\wh{K^0}(\xi,\eta)=\sum_{k_1\in \Z} \sum_{k_2\in \Z} \wh{K^0}(\xi,\eta) \wh{\psi}(2^{-k_1}\xi) \wh{\psi}(2^{-k_2}\eta)
=\sum_{j\in \Z} \wh{K_j^0}(\xi,\eta),
\end{equation*}
where we define
\begin{equation}\label{E:kj}
\wh{K_j^0}(\xi,\eta)
=\sum_{\substack{(k_1,k_2)\in \Z^2\\\max\{k_1,k_2\}=j}}  \wh{K^0}(\xi,\eta) \wh{\psi}(2^{-k_1}\xi) \wh{\psi}(2^{-k_2}\eta).
\end{equation}
We then set
\[
K_j(y,z)=\sum_{\ell\in \Z} 2^{-2\ell} K^0_j(2^{-\ell}(y,z))
\]
and denote by $T_j$ the twisted paraproduct~\eqref{E:twisted-paraproduct} associated with the kernel $K_j$.

In order to prove Theorems~\ref{T:main-banach} and~\ref{T:main}, we establish $L^{p_1}(\R^2) \times L^{p_2}(\R^2) \rightarrow L^p(\R^2)$ bounds for the operators $T_j$ with norms that are summable in $j$. We first focus on the ``low-frequency'' case $j\leq 0$. 

\begin{proposition}\label{P:j<0}
Let $1<p_1,p_2<\infty$, $0<p<2$ satisfy $\frac{1}{p}=\frac{1}{p_1}+\frac{1}{p_2}$. Let $1<q\leq \infty$, $0<\delta<1/q'$, and let $j\leq 0$ be an integer. Then there exists a constant $C>0$ depending on $p_1$, $p_2$, $q$ and $\delta$ such that
\[
\|T_j\|_{L^{p_1}(\R^2)\times L^{p_2}(\R^2)\to L^p(\R^2)}\leq C 2^{j(1-\delta)} \|\Omega\|_{L^q(\S^1)}.
\]
\end{proposition}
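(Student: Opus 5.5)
The plan is to show that for $j\le 0$ the kernel $K_j$ is, up to the factor $C2^{j(1-\delta)}\|\Omega\|_{L^q(\S^1)}$, a Calder\'on-Zygmund kernel of any prescribed order $L$. Once this is done, the result of Kova\v{c} \eqref{E:kovac} applies with a constant depending only on $p_1,p_2$ (and $L=L(p_1,p_2)$), which gives the claim. So everything reduces to proving the symbol estimate
\[
|\partial^\alpha \wh{K_j}(\xi,\eta)|\le C2^{j(1-\delta)}\|\Omega\|_{L^q(\S^1)}|(\xi,\eta)|^{-|\alpha|},\qquad |\alpha|\le L,
\]
where $\wh{K_j}(\xi,\eta)=\sum_{\ell\in\Z}\wh{K^0_j}(2^\ell\xi,2^\ell\eta)$.

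\textbf{Estimates for a single piece.} Since $\wh{K^0_j}$ is supported where $\max(|\xi|,|\eta|)\sim 2^j$, the pieces $\wh{K^0_j}(2^\ell\cdot)$ have boundedly overlapping supports, at scales $|(\xi,\eta)|\sim 2^{j-\ell}$. It therefore suffices to show
\[
|\partial^\alpha \wh{K^0_j}(\xi,\eta)|\le C2^{j(1-\delta)}2^{-j|\alpha|}\|\Omega\|_{L^q}.
\]
Rescaling then gives the homogeneous bound, with at most $O(1)$ terms present at each point.

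To prove this piece estimate I would use two ingredients.

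First, $K^0$ is supported in the annulus $|(y,z)|\in[1/2,2]$ and is integrable there, with $\|K^0\|_{L^1}\lesssim\|\Omega\|_{L^1}$. Hence $\wh{K^0}$ is smooth and $|\partial^\alpha\wh{K^0}|\lesssim\|\Omega\|_{L^1}$ for all $\alpha$.

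Second, by the mean-zero condition $\wh{K^0}(0)=\int K^0=0$ (the radial and angular integrals factor). The mean value theorem then gives $|\wh{K^0}(\xi,\eta)|\lesssim\|\Omega\|_{L^1}|(\xi,\eta)|$.

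Now consider the cutoffs $\wh\psi(2^{-k_1}\xi)\wh\psi(2^{-k_2}\eta)$ with $\max(k_1,k_2)=j$. Their sum over the pairs with $k_1=j$ and $k_2\le j$ is $\wh\psi(2^{-j}\xi)\,\alpha(2^{-j}|\eta|)$, up to the symmetric term and the overlap correction. Derivatives of these cutoff functions cost $2^{-j}$ each, since the $\alpha$-cutoff is also adapted to scale $2^j$. Using Leibniz together with $|\partial^\beta\wh{K^0}|\lesssim \|\Omega\|_{L^1}\min(1,2^{j})$ for $\beta=0$ and $\lesssim\|\Omega\|_{L^1}$ for $\beta\neq0$, we get
\[
|\partial^\alpha\wh{K^0_j}|\lesssim \|\Omega\|_{L^1}\bigl(2^{j}2^{-j|\alpha|}+\textstyle\sum_{0<\beta\le\alpha}2^{-j(|\alpha|-|\beta|)}\bigr)\lesssim\|\Omega\|_{L^q}2^{j}2^{-j|\alpha|},
\]
where the last step uses $j\le0$, so that $2^{-j(|\alpha|-|\beta|)}\le 2^{j}2^{-j|\alpha|}$ whenever $|\beta|\ge1$. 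Since $2^j\le 2^{j(1-\delta)}$ for $j\le0$, this gives the claimed factor; the stated $2^{j(1-\delta)}$ is weaker than what we obtain.

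\textbf{Main obstacle.} The delicate point is bookkeeping rather than anything deep. One has to check that the restricted sum over $(k_1,k_2)$ with $\max=j$ really produces a smooth bump adapted to the box $\max(|\xi|,|\eta|)\sim 2^j$, with derivative bounds at scale $2^{-j}$, since the inner sum over $k_2\le j$ telescopes to $\alpha(2^{-j}|\eta|)$. One must also verify that the assembled $\wh{K_j}$ is a tempered distribution that agrees with the kernel defining $T_j$, so that the theorem of Kova\v{c} can legitimately be invoked.
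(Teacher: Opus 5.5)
Your proposal is correct and follows the paper's route: the paper deduces the proposition from Kova\v{c}'s theorem \cite{Kovac} combined with the symbol estimate \eqref{E:symbol-estimate}, which it does not prove but cites from \cite[Proposition 3]{GHH}. Your argument (mean zero of $K^0$, the mean value theorem, and Leibniz's rule, using that $\sum_{\max\{k_1,k_2\}=j}\wh{\psi}(2^{-k_1}\xi)\wh{\psi}(2^{-k_2}\eta)$ telescopes to a smooth bump adapted to $\max\{|\xi|,|\eta|\}\approx 2^j$) is a valid self-contained proof of that estimate, and it even gives the slightly stronger factor $2^{j}\|\Omega\|_{L^1(\S^1)}$.
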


The result of Proposition~\ref{P:j<0} is a consequence of the $L^{p_1}(\R^2) \times L^{p_2}(\R^2) \rightarrow L^p(\R^2)$ boundedness of the twisted paraproduct with a Calder\'on-Zygmund kernel~\cite{Kovac} and of the fact that the symbol $\wh{K_j}$ satisfies the estimate
\begin{equation}\label{E:symbol-estimate}
|\partial^\a \wh{K_j} (\xi,\h)|\leq C 2^{j(1-\d)} \|\Omega\|_{L^q(\S^1)}|(\xi,\h)|^{-|\a|}
\end{equation}
for all multiindices $\a\in\Z^2_{\geq 0}$. For the proof of~\eqref{E:symbol-estimate}, see~\cite[Proposition 3]{GHH}.

In what follows, we may thus assume that $j\geq 1$. 
We split $K_j^0$ further into its diagonal (or ``high-frequency'') part $K^{0,1}_j$ and off-diagonal (or ``mid-frequency'') part $K_j^{0,2}$ defined as
\begin{equation}\label{E:diagonal-symbol}
\wh{K_j^{0,1}}(\xi,\eta)=\sum_{k=1}^j \wh{K^0}(\xi,\eta) \wh{\psi}(2^{-j}\xi) \wh{\psi}(2^{-k}\eta)
+\sum_{k=1}^{j-1} \wh{K^0}(\xi,\eta) \wh{\psi}(2^{-k}\xi) \wh{\psi}(2^{-j}\eta)
\end{equation}
and
\[
\wh{K_j^{0,2}}(\xi,\eta)=\sum_{k=-\infty}^{0} \wh{K^0}(\xi,\eta) [\wh{\psi}(2^{-j}\xi) \wh{\psi}(2^{-k}\eta)
+ \wh{\psi}(2^{-k}\xi) \wh{\psi}(2^{-j}\eta)].
\]
We note that the second sum in~\eqref{E:diagonal-symbol} should be understood as zero if $j=1$. 
By setting
\begin{equation}\label{E:varphi}
\wh{\varphi}(\xi)=\sum_{k=-\infty}^{0} \wh{\psi}(2^{-k}\xi),
\end{equation}
we may write
\begin{equation}\label{E:off-diagonal-kernel}
\wh{K_j^{0,2}}(\xi,\eta)
=\wh{K^0}(\xi,\eta) [\wh{\psi}(2^{-j}\xi) \wh{\varphi}(\eta)
+ \wh{\varphi}(\xi) \wh{\psi}(2^{-j}\eta)].
\end{equation}
We then set
\[
K_j^1(y,z)=\sum_{\ell\in \Z} 2^{-2\ell} K^{0,1}_{j}(2^{-\ell}(y,z)), \quad K_j^2(y,z)=\sum_{\ell\in \Z} 2^{-2\ell} K^{0,2}_{j}(2^{-\ell}(y,z))
\]
and
denote by $T_j^1$ and $T_j^2$ the twisted paraproducts~\eqref{E:twisted-paraproduct} associated with the diagonal kernel $K_j^1$ and the off-diagonal kernel $K_j^2$, respectively. 

We start by establishing the $L^2(\R^2) \times L^2(\R^2) \rightarrow L^1(\R^2)$ boundedness of the operators $T_j^1$ with norms decaying in $j$. 

\begin{proposition}\label{P:smoothing-diagonal} 
Let $1<q\leq \infty$ and let $j\geq 1$ be an integer.
Then there exist constants $C>0$ and $\delta>0$ depending only on $q$ such that
\begin{equation}\label{E:diagonalL2}
	\|T_j^1\|_{L^{2}(\R^2)\times L^{2}(\R^2)\to L^1(\R^2)}\leq C 2^{-\delta j} \|\Omega\|_{L^q(\S^1)}.
        \end{equation}
\end{proposition}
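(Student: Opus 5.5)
We decompose the diagonal kernel into scale pieces and apply the bilinear smoothing multiplier theorem of \cite{HLpreprint} to each piece. Writing out the definitions, we have
\[
T_j^1(f,g)(x,y)=\sum_{\ell\in\Z}\int_{\R^2} f(x-s,y)\,g(x,y-t)\,2^{-2\ell}K_j^{0,1}(2^{-\ell}(s,t))\,ds\,dt .
\]
By \eqref{E:diagonal-symbol}, the $\ell$-th summand has the symbol $\wh{K^{0,1}_j}(2^\ell\xi,2^\ell\eta)$. This symbol is supported where $|(\xi,\eta)|\sim 2^{j-\ell}$ and, more precisely, where at least one of $|\xi|,|\eta|$ is $\sim 2^{j-\ell}$ and the other is $\gtrsim 2^{-\ell}$. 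By \eqref{E:decay-K0} it is bounded by $C2^{-\delta_0 j}\|\Omega\|_{L^q}$ for any $\delta_0<1/q'$. Its derivatives are bounded by $2^{\ell|\alpha|}$ times $\sup|\nabla^\alpha\wh{K^0}|\lesssim\|\Omega\|_{L^1}$, since $K^0$ is compactly supported in the annulus $\{1/2\le|(y,z)|\le 2\}$. Thus, at scale $2^{-\ell}$, the symbol of the $\ell$-th piece is a bump adapted to unit-scale frequency balls, living at frequency $2^{j}$ in the rescaled variables.

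The key step is to establish, for a single scale (by rescaling, $\ell=0$), the estimate
\begin{equation*}
\Bigl|\int_{\R^2} T^{(0)}_j(f,g)\,h\Bigr|\lesssim 2^{-\delta_1 j}\|\Omega\|_{L^q}\,\|f\|_{L^2}\|g\|_{L^2}\|h\|_{L^\infty},
\end{equation*}
where $T^{(0)}_j$ denotes the $\ell=0$ piece. We would obtain it by interpolating two bounds. The first is a trivial bound costing $2^{Cj}$-free constants, namely $\|K^0\|_{L^1}\lesssim\|\Omega\|_{L^1}$ together with Cauchy--Schwarz in the fibers. The second is the decay estimate. Here the bilinear Fourier multiplier theorem of \cite{HLpreprint} is what we invoke. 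For twisted forms $\int f(x-s,y)g(x,y-t)m^\vee(s,t)\,ds\,dt$, where $m$ is supported where the frequencies are $\sim 2^j$ in one coordinate, it gives a smoothing gain $2^{-\epsilon j}$ relative to $\|m^\vee\|_{L^1}$-type norms. The point is that the twist makes $f$ only see $\xi$ and $g$ only see $\eta$, so oscillation at frequency $2^j$ in either variable yields decay. Combining this gain with the decay \eqref{E:decay-K0} of $\wh{K^0}$, or simply with an $L^1$/$L^q$ interpolation in $\Omega$, produces a uniform $2^{-\delta j}$ factor depending only on $q$. We first prove the single-scale estimate for $\Omega\in L^\infty$, where both inputs are clean, and then pass to $L^q$ by interpolating against the $L^1$ bound with $2^{Cj}$ loss, choosing the exponents so that a net $2^{-\delta j}$ remains.

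To sum over $\ell$, we localize frequencies: the $\ell$-th piece only interacts with the Littlewood--Paley pieces $P^{(1)}_{k}f$ and $P^{(2)}_{k'}g$, with $k,k'\le j-\ell+O(1)$ and at least one of them equal to $j-\ell+O(1)$. For the terms where $f$ carries the top frequency, we bound
\[
\sum_\ell\|T^{(\ell)}(P^{(1)}_{j-\ell}f,\tilde P_{\le j-\ell} g)\|_{L^1}\le 2^{-\delta j}\sum_\ell\|P^{(1)}_{j-\ell}f\|_2\,\|M g\|_2 ,
\]
which is not yet summable as written. Instead, we apply the single-scale bound in the $L^2\times L^2\to L^1$ form with $g$ replaced by a square-function-friendly piece, and then use Cauchy--Schwarz in $\ell$. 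Concretely, we write $\tilde P_{\le j-\ell}g$ and use the fact that the $\ell$-th piece's $g$-frequency is $\gtrsim 2^{-\ell}$, so only $O(j)$ Littlewood--Paley pieces $P^{(2)}_{k'}g$ of $g$ contribute. Paying a factor $j$, which is absorbed by $2^{-\delta j}$, we then use Cauchy--Schwarz over $\ell$ with the $L^2$ orthogonality of $P^{(1)}_{j-\ell}f$ and $P^{(2)}_{k'}g$. We expect the main obstacle to be the single-scale smoothing estimate, specifically verifying the hypotheses of the \cite{HLpreprint} multiplier theorem for the symbol $\wh{K^0}\,\wh\psi(2^{-j}\cdot)\wh\psi(2^{-k}\cdot)$ with only $L^q$ control on $\Omega$.
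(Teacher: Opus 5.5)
Your architecture matches the paper's: a single-scale smoothing bound for each piece $\wh{K^0}(\xi,\eta)\wh\psi(2^{-j}\xi)\wh\psi(2^{-k}\eta)$, interpolation in $\Omega$, and then summation over $\ell$ via frequency localization, Cauchy--Schwarz in $\ell$ and orthogonality, paying a factor for the $O(j)$ pairs $(k_1,k_2)$. That last step is fine. The gap is in the single-scale estimate, which you flag as the main obstacle but do not establish, and the mechanism you give for it is wrong.

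The theorem of \cite{HLpreprint} does not give a gain $2^{-\varepsilon j}$ relative to $\|\widecheck{m}\|_{L^1}$, and oscillation at frequency $2^j$ alone produces no decay. Take $\widecheck{m}(s,t)=e^{2\pi i 2^j(s-t)}\chi(s,t)$, with $\chi\geq 0$ Schwartz and $\wh{\chi}$ supported in the unit ball. Take $f=e^{2\pi i 2^j(x+y)}$ and $g=\overline{f}$, truncated to a large square. Then $f(x-s,y)g(x,y-t)\widecheck{m}(s,t)=\chi(s,t)$ for $(x,y)$ well inside the square. Hence $\|T(f,g)\|_{L^1}\gtrsim\|f\|_{L^2}\|g\|_{L^2}$ uniformly in $j$, while $\|\widecheck{m}\|_{L^1}=\|\chi\|_{L^1}$.

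What the theorem actually gives is the bound $\lambda^{3/5}\|m\|_{L^\infty}$ of Corollary~\ref{C:corollary-multiplier}. This comes from $\|m\|_u\lesssim\lambda^{1/2}\|m\|_{L^\infty}$ and $\|m\|_U\lesssim\lambda^{5/8}\|m\|_{L^\infty}$ for $m$ supported in $|(\xi,\eta)|\leq\lambda$. With $\lambda\approx 2^j$, decay appears only after inserting $\|m\|_{L^\infty}\lesssim 2^{-j(1/q'-\varepsilon)}\|\Omega\|_{L^q}$ from \eqref{E:decay-K0}. The net exponent $1/q-2/5+\varepsilon$ is negative only for $q>5/2$. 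So the decay of $\wh{K^0}$ is indispensable, not an alternative to interpolation as your ``or simply'' suggests. The interpolation must then be against the $j$-uniform bound $\|\widecheck{m}\|_{L^1}\lesssim\|\Omega\|_{L^1}$. With the ``$2^{Cj}$ loss'' you mention at one point, Riesz--Thorin would only give decay for $q$ above some threshold, not for all $q>1$.

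Second, the theorem concerns the operator $\mathcal T_m$, which carries the cutoff $1_{[0,1]^2}(x,y)$. You never pass from this local bound to a global one. Summing over unit squares requires localizing $f$ and $g$ as well. That is obstructed because the kernel of each piece is $K^0$ convolved with Littlewood--Paley kernels, so it is not compactly supported.

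The paper handles this in three steps. It first rescales anisotropically to $\wh{K^0}(\xi,2^{-j+k}\eta)\wh\psi(2^{-j}\xi)\wh\psi(2^{-j}\eta)$. It then writes the operator as $\widetilde T^0_{j,k}(\Delta^{(1)}_j f,\Delta^{(2)}_j g)$, where $\widetilde T^0_{j,k}$ has a compactly supported kernel. Finally it inserts cutoffs $\widetilde\eta_n$ and bounds the commutators $[\widetilde\eta_n,\Delta^{(l)}_j]$ by $2^{-j}\phi_{2^j}\ast^{(l)}|\cdot|$.

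The anisotropic rescaling is what makes both projections act at scale $2^{-j}$. Without it, the commutator on the $g$ side would gain only $2^{-k}$, which is useless for small $k$. This local-to-global step is the other missing ingredient. Checking the hypotheses of the multiplier theorem is not an obstacle, since they only require $m\in L^\infty$.
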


Proposition~\ref{P:smoothing-diagonal} is proved in Section~\ref{S:smoothing} based on a bilinear Fourier multiplier theorem from~\cite{HLpreprint}. 

We will next establish a larger range of $L^{p_1}(\R^2) \times L^{p_2}(\R^2) \rightarrow L^p(\R^2)$ bounds for the operator $T_j^1$. 

\begin{proposition}\label{pr:shifted} Let $1<p_1,p_2<\infty$, $1< p <\infty$ satisfy $\frac1{p_1}+\frac{1}{p_2} = \frac1p$. Let $1<q\leq \infty$ and let $j\geq 1$ be an integer.
Then there exist constants $C>0$ and $\delta>0$ depending only on $p_1$, $p_2$ and $q$ such that 
	\begin{equation}
	\|T^1_j\|_{L^{p_1}(\R^2)\times L^{p_2}(\R^2)\to L^p(\R^2)}\leq C 2^{-\delta j} \|\Omega\|_{L^q(\S^1)}.
        \end{equation}
	\end{proposition}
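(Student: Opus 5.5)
The plan is to interpolate the decaying $L^2\times L^2\to L^1$ bound of Proposition~\ref{P:smoothing-diagonal} with a bound for $T_j^1$ in the full range of exponents that is allowed to grow only polynomially in $j$. Bilinear interpolation (complex or real) between $(2,2,1)$ and a nearby triple then gives decay $2^{-\delta' j}$ for any $(p_1,p_2,p)$ with $p>1$, since every such triple lies on a segment from $(1/2,1/2,1)$ to another admissible triple. The Banach range $p>1$ is what makes interpolation with a crude bound painless, which is presumably why the hypothesis is $p>1$ rather than $p\ge 1$ or $p<1$.

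For the crude bound I would use the structure of \eqref{E:diagonal-symbol}. The kernel $K_j^1$ is a finite sum, over $O(j)$ pairs $(j,k)$ and $(k,j)$ with $1\le k\le j$, of pieces
\[
\sum_\ell 2^{-2\ell}\bigl(K^0 * (\psi_j\otimes\psi_k)\bigr)(2^{-\ell}\cdot),
\]
where $\psi_j=2^j\psi(2^j\cdot)$. By the triangle inequality it suffices to bound each piece by $C j^{N}\|\Omega\|_{L^q}$, or even by $C2^{Cj}\|\Omega\|_{L^q}$. For this step I would avoid fine analysis. Since $K^0$ is supported in the annulus $|(y,z)|\sim 1$ and lies in $L^q\subset L^1$ there, I would write $K^0*(\psi_j\otimes\psi_k)$ as an average over $(y_0,z_0)\in\operatorname{supp}K^0$, weighted by $K^0(y_0,z_0)$, of the translates $(\psi_j\otimes\psi_k)(\cdot-(y_0,z_0))$. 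Each piece is then an average, with weight in $L^1$ of norm $\lesssim\|\Omega\|_{L^1}$, of shifted twisted paraproducts with kernels of the form \eqref{E:kuv}: after rescaling by $2^{j}$ and $2^{k}$ in the two variables, the shifts are $u\sim 2^{j}y_0$ and $v\sim 2^{k}z_0$. The Fourier support of $\psi$ avoids the origin in both variables, so the hypotheses on $\phi,\rho$ in \eqref{E:kuv} hold. The anisotropic dilation $(2^j,2^k)$ is harmless after noting that the twisted paraproduct is invariant under separate dilations in $x$ and $y$ (one checks the roles of $f,g$ accordingly).

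One then invokes the shifted estimate \eqref{E:logarithmic-blowup}, i.e.\ Lemma~\ref{L:shifted} as announced in the introduction. Its proof reduces, via shifted square functions, to Kova\v{c}'s bound \eqref{E:kovac}, but in the Banach range $1<p<2$ I would instead like a version valid for $1<p<\infty$. For $p\ge 2$ the twisted paraproduct bound is unknown. So for the crude bound with $p\ge2$ I would exploit that each piece has frequency-localized factors: bound $T$ by Minkowski in the translation average and then estimate a single-scale-structured operator by shifted maximal and square functions. The shifted square function with a shift of size $2^{j}$ costs $O(j)$, and the paraproduct reduces via H\"older to products of shifted maximal functions and square functions, which are bounded for all $1<p_1,p_2<\infty$. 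This yields $\|T_j^1\|\lesssim 2^{Cj}$, or even $j^{C}$, for all $p>1$.

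The main obstacle is this crude bound for $p\ge 2$, where no Calder\'on--Zygmund twisted paraproduct theory is available. I would first try to show that each dyadic piece essentially decouples: with $\widehat\psi(2^{-j}\xi)\widehat\psi(2^{-k}\eta)$ at a fixed scale $\ell$, the sum over $\ell$ of $f_\ell$ in $x$ and $g_\ell$ in $y$ is controlled by the $\ell^2$ sums $(\sum_\ell|M f_\ell|^2)^{1/2}(\sum_\ell|M g_\ell|^2)^{1/2}$ via Cauchy--Schwarz. The shifted maximal functions lose only polynomially in $j$. Since any polynomial or even exponential loss is acceptable against the $2^{-\delta j}$ decay after interpolation (adjusting exponents close to $(2,2,1)$), I expect this to close.
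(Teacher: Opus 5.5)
Your final route is the paper's own argument. Write each of the $2j-1$ pieces of $T_j^1$ as a $K^0$-weighted average over $(s',t')$ of paraproducts with kernels $\sum_\ell \psi^{2^{k_1}s'}_{2^{k_1-\ell}}(s)\,\psi^{2^{k_2}t'}_{2^{k_2-\ell}}(t)$. Bound each one pointwise by Cauchy--Schwarz in $\ell$, then use H\"older and the shifted square function estimate \eqref{E:shifted-square-function}. This costs $\log(2+2^{k_1}|s'|)\log(2+2^{k_2}|t'|)\lesssim j^2$ on the support of $K^0$. Minkowski then gives the crude bound, which is Lemma~\ref{L:shifted} ($j^3\|\Omega\|_{L^1(\S^1)}$), and you interpolate with Proposition~\ref{P:smoothing-diagonal}. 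No maximal functions are needed. Two points in your write-up need correcting, and the second matters for the proof to close.

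\textbf{No Kova\v{c} step.} The Cauchy--Schwarz argument works verbatim for every $1\le p<\infty$, because both cutoffs $\wh\psi(2^{-k_1}\cdot)$ and $\wh\psi(2^{-k_2}\cdot)$ are annular. So there is no case split at $p=2$ and no role for Kova\v{c}'s theorem. Also, \eqref{E:logarithmic-blowup} is Lemma~\ref{L:shifted-twisted-paraproduct}, not Lemma~\ref{L:shifted}. It is needed only for $T_j^2$, where one factor is $\wh\varphi$. Had you used it here for $1<p<2$, the factor $(1+|v|)^L$ with $v=2^{k_2}t'$ and $k_2$ as large as $j$ would cost $2^{Lj}$.

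\textbf{Exponential loss is not acceptable.} Interpolate $2^{-\delta j}$ at $(\tfrac12,\tfrac12)$ with $2^{Cj}$ at $(1/\tilde p_1,1/\tilde p_2)$, with parameter $\theta$. You get decay only if $\theta<\delta/(\delta+C)$. Since $(1/p_1,1/p_2)-(\tfrac12,\tfrac12)=\theta\bigl((1/\tilde p_1,1/\tilde p_2)-(\tfrac12,\tfrac12)\bigr)$, this reaches only targets within distance $\lesssim\delta/(\delta+C)$ of $(\tfrac12,\tfrac12)$. That is not the full range $p>1$. What makes every $\theta<1$ admissible is the polynomial growth: $2^{-\delta(1-\theta)j}j^{3\theta}\lesssim 2^{-\delta' j}$. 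So your proof closes only because the Cauchy--Schwarz route gives $j^{C}$, not $2^{Cj}$.
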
 

The proof of Proposition~\ref{pr:shifted} can be found in Subsection~\ref{sub:1}. It makes use of standard techniques involving shifted square functions, see, e.g., \cite{Muscalu, DR2021,DS2024,BSpreprint}, and of an interpolation argument utilizing Proposition~\ref{P:smoothing-diagonal}. Notably, bounds for the operator $T_j^1$ do not rely on repeated applications of the Cauchy-Schwarz inequality and telescoping identities, which is the usual method for proving boundedness of the twisted paraproduct with a smooth kernel. For this reason, the boundedness range in Proposition~\ref{pr:shifted} is larger than the one in which bounds for the twisted paraproduct with a Calder\'on-Zygmund kernel are currently known. 

We will now focus on proving bounds for the operator $T_j^2$. 

\begin{proposition}\label{P:shifted-decay} 
Let $1<p_1,p_2<\infty$, $1\leq p < 2$ satisfy $\frac1{p_1}+\frac{1}{p_2} = \frac1p$. Let $1<q\leq \infty$ and let $j\geq 1$ be an integer. Then there exist constants $C>0$ and $\delta>0$ depending only on $p_1$, $p_2$ and $q$ such that
	\begin{equation}\label{E:off-diagonalLp}
	\|T^2_j\|_{L^{p_1}(\R^2)\times L^{p_2}(\R^2)\to L^p(\R^2)}\leq C 2^{-\delta j} \|\Omega\|_{L^q(\S^1)}.
        \end{equation}
\end{proposition}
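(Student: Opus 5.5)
By symmetry we treat only the first term of \eqref{E:off-diagonal-kernel}, namely $\wh{m_j}(\xi,\eta)=\wh{K^0}(\xi,\eta)\wh{\psi}(2^{-j}\xi)\wh{\varphi}(\eta)$. The second term is handled in the same way with the roles of $f$ and $g$ exchanged. The plan is to expand $m_j$ in a local Fourier series and thereby write $T_j^2$ as a rapidly convergent superposition of shifted twisted paraproducts $T_{K^{u,v}}$ with kernels as in \eqref{E:kuv}. We then invoke the logarithmic-in-$u$, polynomial-in-$v$ bound \eqref{E:logarithmic-blowup}. We take that bound as available in the range $1\le p<2$, since it reduces to Kova\v{c}'s Calder\'on--Zygmund bounds \cite{Kovac} through shifted square functions, as stated in the introduction.

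\emph{Rescaling and expansion.} Rescale $\xi$ by $2^{j}$. The symbol $\wh{K^0}(2^j\xi,\eta)\wh{\psi}(\xi)\wh{\varphi}(\eta)$ is supported in the fixed box $\{1/2\le|\xi|\le 2\}\times\{|\eta|\le 2\}$. Its sup norm is at most $C2^{-\delta' j}\|\Omega\|_{L^q}$ by \eqref{E:decay-K0}, because $|(2^j\xi,\eta)|\gtrsim 2^j$ there. Multiply by cutoffs $\wh{\tilde\psi}(\xi)\wh{\tilde\varphi}(\eta)$ equal to $1$ on the support, and expand in a Fourier series on a box of side $8$:
\[
\wh{K^0}(2^j\xi,\eta)\wh{\psi}(\xi)\wh{\varphi}(\eta)=\sum_{(u,v)\in\Z^2} c_{j}(u,v)\, e^{2\pi i (u\xi+v\eta)/8}\,\wh{\tilde\psi}(\xi)\wh{\tilde\varphi}(\eta).
\]
Undoing the rescaling and summing over the dilations $\ell$ turns each term into a multiple of a kernel $K^{u,v}$ with $\phi=\tilde\psi$, whose Fourier transform vanishes near $0$, and $\rho=\tilde\varphi$. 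The $v$-shift is at unit scale and the $u$-shift lives at the scale $2^{-j}$ relative to $\ell$. Consequently
\[
\|T_j^2\|\lesssim\sum_{u,v}|c_j(u,v)|\log(2+|u|)(1+|v|)^L .
\]

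\emph{Coefficient bounds.} We need
\[
\sum_{u,v}|c_j(u,v)|\log(2+|u|)(1+|v|)^L\lesssim 2^{-\delta j}\|\Omega\|_{L^q}.
\]
Smoothness in $\eta$ is harmless. For $|\eta|\le2$ and $|\xi|\sim 2^j$, differentiating $\wh{K^0}$ in $\eta$ costs nothing, since $\wh{K^0}$ is the transform of a compactly supported kernel and $|\partial^\alpha\wh{K^0}(\zeta)|\lesssim|\zeta|^{-\delta}$ with the same proof as \eqref{E:decay-K0}. Integration by parts in $\eta$ therefore gives decay $(1+|v|)^{-N}|c_j(u,v)|$-type bounds. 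In $\xi$, however, the rescaled symbol $\wh{K^0}(2^j\cdot,\eta)$ has derivatives of size $2^{j}$, so there is no uniform decay in $u$. This is the main obstacle. We will not try to get summable decay in $u$. Instead we use $\ell^2$ control: by Plancherel in $\xi$ for fixed $v$, and interpolating the sup bound $2^{-\delta' j}$ against a derivative bound of size $2^{j}$, we split $|u|\le 2^{j(1+\epsilon)}$ from $|u|>2^{j(1+\epsilon)}$.

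On the range $|u|\le 2^{j(1+\epsilon)}$, Cauchy--Schwarz gives
\[
\sum_{|u|\le 2^{j(1+\epsilon)}}|c_j(u,v)|\lesssim 2^{j(1+\epsilon)/2}\Bigl(\sum_u|c_j(u,v)|^2\Bigr)^{1/2}.
\]
This loses a factor $2^{j/2}$, which the decay $2^{-\delta' j}$ cannot absorb. We therefore need a better route, and the first thing we would try is to avoid Fourier expansion in $\xi$. The $u$-variable corresponds to the spatial variable $s$, in which $K^0$ is already compactly supported. So instead we would expand $\wh{K^0}$ directly: in physical space $K^0_{j}{}^{,2}=K^0*_{s}(2^{j}\check\psi(2^j\cdot))\otimes_t\check\varphi$ remains essentially supported in $|s|\lesssim1$. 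Writing the $s$-convolution as an integral over shifts $u\in[-2^{j},2^{j}]$ in units of $2^{-j}$, with weights given by $K^0$, the total weight is controlled by $\|K^0*\cdots\|_{L^1}$-type quantities. Only logarithmic growth $\log(2+|u|)\lesssim j$ appears, which is absorbed by $2^{-\delta' j}$ after interpolating the trivial $L^1$-based bound with $j\log$ loss against the $L^2$ decay coming from \eqref{E:decay-K0}.

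\emph{Interpolation.} The interpolation runs as follows. On the diagonal exponents (e.g.\ $p_1=p_2=2$, $p=1$), Plancherel together with \eqref{E:decay-K0} gives decay $2^{-\delta' j}$. In the full range $1\le p<2$, the shifted representation gives a bound $Cj^{C}\|\Omega\|_{L^q}$. Complex or real multilinear interpolation between these two estimates then yields \eqref{E:off-diagonalLp} with some smaller $\delta>0$.
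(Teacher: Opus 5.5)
Your representation of the kernel as a superposition of shifted twisted paraproducts is the right one, and the $j^{C}$ bound in the full range $1\le p<2$ is fine. The gap is in where the decay $2^{-\delta j}$ comes from. Your final step interpolates against an $L^2\times L^2\to L^1$ estimate for $T_j^2$ with decay, which you attribute to ``Plancherel together with \eqref{E:decay-K0}''. That estimate cannot be obtained this way.

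First, Plancherel does not turn a sup bound on a symbol into an $L^2\times L^2\to L^1$ bound for a twisted paraproduct. Even at a single scale this is the job of the Hsu--Lin theorem, and that theorem loses $\lambda^{3/5}$ in Corollary~\ref{C:corollary-multiplier}. More decisively, the sum over scales cannot be handled by orthogonality. Since $\wh{\varphi}$ is a low-pass cutoff, the pieces $g\ast^{(2)}\varphi_{2^{-\ell}}$ are not almost orthogonal in $\ell$. So the Cauchy--Schwarz-in-$\ell$ argument from the proof of Proposition~\ref{P:smoothing-diagonal} breaks down. The multi-scale sum is a genuine twisted paraproduct, and even its $L^2\times L^2\to L^1$ bound without decay needs Kova\v{c}'s theorem. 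The decaying endpoint of your interpolation is therefore unproved, and the argument does not close.

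The missing idea is to put the decay into the weights of the superposition rather than into a separate operator estimate. After the anisotropic rescaling, write the kernel as $\sum_k\int\mathcal K(s',t')\,\phi^{2^js'}_{2^{-k}}(s)\,\rho^{t'}_{2^{-k}}(t)\,ds'dt'$ with $\wh{\mathcal K}(\xi,\eta)=\wh{K^0}(\xi,\eta)\wh{\psi}(2^{-j}\xi)\wh{\varphi}(\eta)$. Using weights $K^0$ instead discards the cancellation and only gives $j\|\Omega\|_{L^1(\S^1)}$. Lemma~\ref{L:shifted-twisted-paraproduct} and Minkowski bound the operator norm by $j\int|\mathcal K|(1+|s'|)^L(1+|t'|)^L$, and this weighted $L^1$ norm itself decays (Lemma~\ref{L:integral-estimate}):
\begin{itemize}
\item For $\Omega\in L^\infty$, Cauchy--Schwarz and Plancherel on $B(0,2)$ and on dyadic annuli give $\|\wh{\mathcal K}\|_{L^2}\lesssim 2^{-3j/4}\,2^{j/2}$. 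The support has area $\approx 2^j$, but for $q=\infty$ the exponent in \eqref{E:decay-K0} can be taken close to $1$. So your remark that the $2^{j/2}$ loss cannot be absorbed is only true for small $q$.
\item For $\Omega\in L^1$, Young's inequality and Schwartz tails give an $O(1)$ bound, with $2^{-8Lk}$ decay on the $k$-th annulus.
\item A geometric mean of these two bounds on each annulus, followed by Riesz--Thorin in $\Omega$, gives $2^{-\delta j}\|\Omega\|_{L^q(\S^1)}$ for every $q>1$.
\end{itemize}
This yields the decay at every admissible exponent directly, with no interpolation between exponents. If your remark about interpolating the $L^1$-based bound against the $L^2$ decay was meant at the level of the kernel, this is how it has to be carried out.
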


Proposition~\ref{P:shifted-decay} is proved in Subsection~\ref{sub:2}. Our argument relies on logarithmic estimates for the family of shifted twisted paraproducts, which are established in the same subsection as well. 


A combination of Propositions~\ref{pr:shifted} -- \ref{P:shifted-decay} yields the following corollary.

\begin{corollary}\label{P:interpolation-banach}
Let $1<p_1,p_2<\infty$, $1<p < 2$ satisfy $\frac1{p_1}+\frac{1}{p_2} = \frac1p$.
Let $1<q\leq \infty$ and let $j\geq 1$ be an integer.
Then there exist constants $C>0$ and $\delta>0$ depending only on $p_1$, $p_2$ and $q$ such that
\begin{equation}
	\|T_j\|_{L^{p_1}(\R^2)\times L^{p_2}(\R^2)\to L^p(\R^2)}\leq C 2^{-\delta j} \|\Omega\|_{L^q(\S^1)}.
        \end{equation}
\end{corollary}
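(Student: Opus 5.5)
Since $T_j = T_j^1 + T_j^2$ for every $j\geq 1$, we may add the bounds from Propositions~\ref{pr:shifted} and~\ref{P:shifted-decay}. To justify the splitting, compare definitions~\eqref{E:kj} and~\eqref{E:diagonal-symbol}. The pairs $(k_1,k_2)$ with $\max\{k_1,k_2\}=j$ split into three groups: those with $1\le k_2\le j$ and $k_1=j$; those with $1\le k_1\le j-1$ and $k_2=j$; and those where the other index is $\le 0$. The first two groups give $\wh{K_j^{0,1}}$. Using~\eqref{E:varphi}, the third gives $\wh{K_j^{0,2}}$ as in~\eqref{E:off-diagonal-kernel}. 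Hence $\wh{K_j^0}=\wh{K_j^{0,1}}+\wh{K_j^{0,2}}$. Summing the dilates over $\ell$ is linear, so $K_j=K_j^1+K_j^2$. The twisted paraproduct~\eqref{E:twisted-paraproduct} is linear in the kernel, so $T_j=T_j^1+T_j^2$ on Schwartz functions.

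Now fix $1<p_1,p_2<\infty$ and $1<p<2$ with $1/p=1/p_1+1/p_2$. The tuple lies in the range of Proposition~\ref{pr:shifted}, since $1<p<\infty$. It also lies in the range of Proposition~\ref{P:shifted-decay}, since $1\le p<2$. These give
\[
\|T_j^1\|\le C_1 2^{-\delta_1 j}\|\Omega\|_{L^q(\S^1)}, \qquad \|T_j^2\|\le C_2 2^{-\delta_2 j}\|\Omega\|_{L^q(\S^1)},
\]
with constants depending only on $p_1,p_2,q$. Because $p\geq 1$, $\|\cdot\|_{L^p}$ is a genuine norm and the triangle inequality applies without a quasi-norm constant. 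Taking $\delta=\min\{\delta_1,\delta_2\}>0$ and $C=C_1+C_2$ yields the claim for Schwartz inputs. The bounded extension then follows by density of Schwartz functions in $L^{p_1}$ and $L^{p_2}$.

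The only point needing care is the bookkeeping of the index set in the splitting. The case $j=1$, where the second sum in~\eqref{E:diagonal-symbol} is empty, is consistent with this bookkeeping. There is no genuine obstacle beyond invoking the two propositions.
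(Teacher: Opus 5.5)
Your proposal is correct and is exactly the paper's argument: the paper obtains this corollary by writing $T_j=T_j^1+T_j^2$ and adding the bounds from Propositions~\ref{pr:shifted} and~\ref{P:shifted-decay}, both of which cover the range $1<p<2$. Your explicit check that the index pairs split so that $\wh{K_j^0}=\wh{K_j^{0,1}}+\wh{K_j^{0,2}}$ spells out a step the paper leaves implicit.
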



We next focus on the $L^{p_1}(\R^2) \times L^{p_2}(\R^2) \rightarrow L^p(\R^2)$ boundedness of the operator $T_j$ in the case when $p\leq 1$. 

\begin{proposition}\label{P:interpolation-quasi-banach}
Let $1<p_1,p_2<\infty$, $0< p \leq 1$ satisfy $\frac1{p_1}+\frac{1}{p_2} = \frac1p$.
Let $1<q\leq \infty$ and let $j\geq 1$ be an integer. Assume that $\frac{1}{p}+\frac{1}{q}<2$.
Then there exist constants $C>0$ and $\delta>0$ depending only on $p_1$, $p_2$ and $q$ such that
\begin{equation}\label{E:decay-quasibanach}
	\|T_j\|_{L^{p_1}(\R^2)\times L^{p_2}(\R^2)\to L^p(\R^2)}\leq C 2^{-\delta j} \|\Omega\|_{L^q(\S^1)}.
        \end{equation}
\end{proposition}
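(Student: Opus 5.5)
The plan is to interpolate between a bound with exponential decay in $j$ near $p=1$ and a bound with only polynomial growth in $j$ for small $p$, the latter being available for any $\Omega$ in $L^{q}$. Since $1/p+1/q<2$ is an open condition, this lets us trade a little of the gain for a little of the loss.

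\textbf{Decaying endpoint.} For exponents $(p_1,p_2,1)$ with $1/p_1+1/p_2=1$, Corollary~\ref{P:interpolation-banach} does not directly apply, because it assumes $p>1$. Instead we use Proposition~\ref{P:shifted-decay} (which allows $p=1$) for $T_j^2$. For $T_j^1$ we interpolate: Proposition~\ref{pr:shifted} gives decaying bounds for $p>1$, and Proposition~\ref{P:smoothing-diagonal} gives a decaying $L^2\times L^2\to L^1$ bound. Multilinear complex interpolation between the $L^2\times L^2\to L^1$ bound and bounds at points $(p_1,p_2,p)$ with $p$ slightly above $1$ and $p_1\to\infty$ or $p_2\to\infty$ covers the whole segment $1/p_1+1/p_2=1$, $1<p_1,p_2<\infty$. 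This yields
\[
\|T_j\|_{L^{p_1}\times L^{p_2}\to L^1}\lesssim 2^{-\delta j}\|\Omega\|_{L^q}.
\]

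\textbf{Growing endpoint for $p<1$.} Here the key observation is that the kernel $K_j$ restricted to a single scale, $K^\ell_j$, is an $L^1$ function. Indeed, $K^0_j$ is (a frequency truncation of) $K^0=\beta^0 K$ with $\Omega\in L^q\subset L^1$. It therefore suffices to establish the following:
\begin{itemize}
\item for $\Omega\in L^1$ one can still get Calder\'on--Zygmund-type bounds, with constants growing like $2^{Cj}$ and without decay;
\item more precisely, $T_j$ is bounded $L^{p_1}\times L^{p_2}\to L^{p}$ for all $p>1/2$ with norm $\lesssim 2^{Nj}\|\Omega\|_{L^1}$.
\end{itemize}
To see the second point, note that $\widehat{K^0_j}$ is localized to $|(\xi,\eta)|\sim 2^j$ after separating the off-diagonal part. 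Smooth frequency localization makes $K^0_j$ a Schwartz-tail function at spatial scale $2^{-j}$ with $L^1$ norm $\lesssim\|\Omega\|_{L^1}$. The symbol estimates~\eqref{E:symbol-estimate}-type derivative bounds then hold with constant $2^{Lj}$, so Kova\v{c}'s theorem~\cite{Kovac} gives the bound for all $p<2$.

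To exploit the condition $1/p+1/q<2$, I will instead interpolate in $\Omega$ as well, that is, use trilinear interpolation in $(f,g,\Omega)$. At $q=1$ the bound $2^{Nj}\|\Omega\|_{L^1}$ holds for all $p\in(1/2,1]$. At $p=1$ the $L^{q_0}$ bound decays for every $q_0>1$. The condition $1/p+1/q<2$ is exactly the statement that the point $(1/p,1/q)$ lies strictly below the line joining $(1,1)$ to $(2,0)$. So it is a convex combination of a point $(1,1/q_0)$ with $q_0>1$ (decay $2^{-\delta j}$) and a point $(1/p_*,1)$ with $1/p_*<2$ (growth $2^{Nj}$). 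Choosing the weight on the growth endpoint small enough that $(1-\theta)\delta>\theta N$ gives net decay.

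\textbf{Main obstacle.} The main difficulty is making the interpolation legitimate. The operator is linear in $\Omega$, but the target space $L^p$ with $p<1$ is quasi-Banach, and the mean-zero constraint on $\Omega$ is not preserved under complex interpolation families. I would handle the mean-zero issue by noting that $T_j$ for $j\geq1$ only sees frequencies away from the origin in one variable, so the bounds above hold for general $\Omega$ (the decay estimate~\eqref{E:decay-K0} did not need mean zero). I would handle the quasi-Banach issue by invoking multilinear interpolation for analytic families into $L^p$, $p<1$ (Grafakos--Kalton type), applied to characteristic functions of sets and then using restricted weak type plus multilinear Marcinkiewicz to recover strong bounds. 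The remaining issue is to check that $\|\Omega\|_{L^1}$ really suffices at the growth endpoint uniformly in $j$ up to $2^{Nj}$; this is where I expect the most care to be needed.
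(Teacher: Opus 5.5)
\textbf{The interpolation step fails.} Your growth endpoint is correct as far as it goes: the symbol of $K_j$ satisfies Calder\'on--Zygmund bounds of order $L$ with constant $2^{Lj}\|\Omega\|_{L^1(\S^1)}$, so Kova\v{c}'s theorem applies. The problem is the step that is supposed to turn this into decay.

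The weight $\theta$ is not at your disposal. If $(1/p,1/q)=(1-\theta)(1,1/q_0)+\theta(1/p_*,1)$ with $1/p_*<2$, then $\theta=(1/p-1)/(1/p_*-1)>1/p-1$. So net decay $(1-\theta)\delta>\theta N$ is only possible when $1/p-1<\delta/(\delta+N)$. That is a thin strip next to $p=1$, because $\delta$ is the small smoothing exponent and $N\geq 1$ is the Kova\v{c} order.

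Your geometric claim is also false. The convex hull of $\{1\}\times[0,1)$ and $[1,2)\times\{1\}$ is the triangle with vertices $(1,0),(1,1),(2,1)$, i.e.\ the region $1/q\geq 1/p-1$, not the region $1/p+1/q<2$. For $q=\infty$ it contains no point with $p<1$. It does contain points such as $(1/p,1/q)=(6/5,9/10)$ (take $\theta=1/2$), where Proposition~\ref{P:counterexample} shows $T_\Omega$ is unbounded. So no refinement of a $q=1$ endpoint can rescue the scheme: even growth $2^{\varepsilon j}$ for every $\varepsilon>0$ would, after summing in $j$, give boundedness at such points. An endpoint at $q=1$ is on the wrong side, and the condition $1/p+1/q<2$ has to be built into the quasi-Banach endpoint itself.

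\textbf{The missing ingredient.} You need a quasi-Banach endpoint estimate whose constant grows only polynomially in $j$ and which uses the $L^q$ hypothesis. The paper obtains it by a fiberwise Calder\'on--Zygmund decomposition (Lemma~\ref{pr:fiberCZ}): $T_j$ maps $L^1\times L^{p_0}\to L^{\frac{p_0}{p_0+1},\infty}$, and symmetrically, with norm $\lesssim j\|\Omega\|_{L^q(\S^1)}$ whenever $q>p_0'$.

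The $L^q$ norm enters in the bad part away from the enlarged intervals. H\"older in $t$ against $(\int|K^0(s,t)|^q\,dt)^{1/q}$ produces $\mathcal M^{(2)}_{q'}\mathcal M^{(2)}g$, which is controlled on $L^{p_0}$ provided $q>p_0'$. Writing $1/r=1+1/p_0$ for the target exponent, this condition reads $1/r+1/q<2$.

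With $p_0=p/(1-p+\varepsilon)$, the hypothesis $1/p+1/q<2$ gives $q>p_0'$ for small $\varepsilon$. The paper then applies bilinear Marcinkiewicz interpolation on a triangle with three vertices:
\begin{itemize}
\item the two weak-type vertices from Lemma~\ref{pr:fiberCZ}, which grow only like $j$;
\item a decaying Banach vertex $(1/\tilde p_1,1/\tilde p_2)$ with $1<\tilde p<2$, from Corollary~\ref{P:interpolation-banach}.
\end{itemize}
Since the decaying vertex carries a fixed positive weight, this gives $2^{-\delta' j}$. No interpolation in $\Omega$ is needed, and no decaying bound at $p=1$ is needed as input; the case $p=1$ is an output.

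\textbf{A secondary flaw.} Your route to decay on the line $1/p_1+1/p_2=1$ does not work as stated. Interpolating $(1/2,1/2)$ with points where $1/p_1+1/p_2<1$ never lands on that line except at $(1/2,1/2)$. You would need Lemma~\ref{L:shifted}, which allows $p=1$ with $j^3$ growth, interpolated with Proposition~\ref{P:smoothing-diagonal}.
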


Proposition~\ref{P:interpolation-quasi-banach} is proved in Section~\ref{S:CZ} via a fiberwise Calder\'on-Zygmund decomposition, combining the techniques of Bernicot~\cite{Bernicot} with those of He and Park~\cite{HP} and Bhojak and Shrivastava~\cite{BSpreprint}. 

Theorem~\ref{T:main-banach} is now a direct consequence of Proposition~\ref{P:j<0}, Corollary~\ref{P:interpolation-banach} and the case $p=1$ of Proposition~\ref{P:interpolation-quasi-banach} (as the assumption $1/p+1/q<2$ is automatically satisfied in this case). 
The implication $\textup{(i)} \Rightarrow \textup{(ii)}$ of Theorem~\ref{T:main} follows by combining Propositions~\ref{P:j<0} and~\ref{P:interpolation-quasi-banach}. The reverse implication is then a consequence of the following proposition. 

\begin{proposition}\label{P:counterexample}
Let $1<p_1,p_2<\infty$, $0< p < \infty$ satisfy $\frac1{p_1}+\frac{1}{p_2} = \frac1p$.
Let $1<q\leq \infty$ and assume that $\frac{1}{p}+\frac{1}{q}\geq 2$. Then there is an odd function $\Omega \in L^q(\S^1)$ for which the operator $T_\Omega$ is unbounded from $L^{p_1}(\R^2) \times L^{p_2}(\R^2)$ into $L^p(\R^2)$. 
\end{proposition}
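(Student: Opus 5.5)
We will construct $\Omega$ as a normalized approximation of the Dirac pair $\frac12\delta_{\theta_0}-\frac12\delta_{-\theta_0}$ with $\theta_0=(1,1)/\sqrt2$. The idea is to transfer the known unboundedness of the triangular Hilbert transform at the critical scaling, from \cite[Appendix A.3]{KTZ} and \cite[Theorem 1.4]{L2025}, to an averaged operator.

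\textbf{Step 1: a single-scale family.} For small $\epsilon>0$, let $I_\epsilon$ be the arc of length $\epsilon$ centered at $\theta_0$. Set
\[
\Omega_\epsilon=\epsilon^{-1}\bigl(\chi_{I_\epsilon}-\chi_{-I_\epsilon}\bigr),
\]
which is odd and satisfies $\|\Omega_\epsilon\|_{L^q(\S^1)}\sim \epsilon^{-1/q'}$. By \eqref{E:average}, up to a constant,
\[
T_{\Omega_\epsilon}(f,g)=\epsilon^{-1}\int_{I_\epsilon} H_\theta(f,g)\,d\sigma(\theta).
\]

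\textbf{Step 2: lower bound on test functions.} Take the test functions from the $L^1$-counterexamples for $H$. These are, for instance, modulated bumps $f(x,y)=e^{iN\Phi(x,y)}\chi_Q$ and $g$ chosen so that $f(x+t,y)g(x,y+t)$ has a phase resonating with $1/t$ over a range of scales $1\le |t|\le N$ on a unit square. This produces $|H(f,g)|\gtrsim \log N$ on a set of measure $\sim 1$, while $\|f\|_{p_1}\|g\|_{p_2}\sim 1$.

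The key point is that $H_\theta(f,g)$ stays comparable to $H(f,g)$ for $|\theta-\theta_0|\lesssim\epsilon$ as long as the truncation scale satisfies $N\lesssim\epsilon^{-1}$. Beyond that, the directions decorrelate and the average $\epsilon^{-1}\int_{I_\epsilon}$ behaves like a smooth (Calder\'on-Zygmund) kernel. So we choose $N=\epsilon^{-1}$.

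\textbf{Step 3: rescaling to the critical exponent.} To reach the line $\frac1p+\frac1q\geq 2$ rather than only $p=1$, we concentrate the test functions on small sets. We use $L^{p_1}\times L^{p_2}$-normalized functions supported on a set of measure $\sim\epsilon$, for example a thin $\epsilon^{-1}$-oscillating configuration localized to width $\epsilon$. This should yield a lower bound of the form
\[
\|T_{\Omega_\epsilon}(f,g)\|_{L^p}\gtrsim \epsilon^{-(1/p-1)}\cdot(\text{output size})\cdot \|f\|_{L^{p_1}}\|g\|_{L^{p_2}}.
\]
Such a bound gives a ratio $\gtrsim \epsilon^{\,1-1/p}\,\|\Omega_\epsilon\|_{L^q}^{-1}$ times a blow-up factor. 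The target is to show that
\[
\frac{\|T_{\Omega_\epsilon}\|}{\|\Omega_\epsilon\|_{L^q}}\gtrsim \epsilon^{\,2-\frac1p-\frac1q}\log(1/\epsilon).
\]
This is unbounded when $\frac1p+\frac1q\ge2$, with the logarithm handling the endpoint. In the nonendpoint case we may even take Dirac-like averages without the logarithmic resonance: the support mismatch between the input at scale $\epsilon$ and an output spread over a set of measure $\sim 1$ already gives the power loss, much like the classical counterexamples for rough bilinear operators in \cite{GHS,DS2024}.

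\textbf{Step 4: from a family to a single $\Omega$.} Finally, we pass to a single odd function by setting
\[
\Omega=\sum_k c_k\,\Omega_{\epsilon_k}(R_k\,\cdot),
\]
with rapidly decreasing $\epsilon_k$, rotations $R_k$ separating the supports, and coefficients satisfying $\sum_k c_k\|\Omega_{\epsilon_k}\|_{L^q}<\infty$ but $c_k\cdot(\text{blow-up})\to\infty$. Because the supports are disjoint and the scales are lacunary, the test functions for the $k$-th piece interact negligibly with the other pieces; this follows from the decay estimate \eqref{E:decay-K0} and Proposition~\ref{P:j<0}, which bound the other pieces uniformly. Alternatively, we can argue by the uniform boundedness principle on the Banach space of odd $L^q$ functions.

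We expect the main obstacle to be Step 2–3: making precise that the averaged directions preserve the resonant logarithmic (or power) growth up to scale $\epsilon^{-1}$. For this we will try to adapt the explicit construction of \cite{L2025}, checking directly that the phase error $|\theta-\theta_0|\cdot|t|\cdot N$ stays $\lesssim 1$.
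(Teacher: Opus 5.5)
\textbf{Verdict: there is a genuine gap.} Steps 2 and 3 are where the whole proof has to happen, and as written they contain no argument: no test functions are fixed and no lower bound is computed (``this should yield'', ``the target is to show''). More seriously, the mechanism you invoke is not the right one, and the endpoint case is left without any source for the logarithm.

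\textbf{The mechanism in Step 2.} Your claim cannot hold as stated. Functions with $\|f\|_{L^{p_1}}\|g\|_{L^{p_2}}\sim 1$ and $|H(f,g)|\gtrsim \log N$ on a set of measure $\sim 1$ would give $\|H(f,g)\|_{L^p}\gtrsim \log N$ for every $p$, so $H$ would be unbounded for all exponents. That is the open problem. In addition, if $f$ lives on a unit square, then for fixed $(x,y)$ the variable $t$ ranges over an interval of length $\lesssim 1$, so the scales $1\le |t|\le N$ cannot resonate.

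The $p=1$ obstruction for $H$ is not oscillatory. For $f=a(x+y)$ and $g=b(x+y)$, $H(f,g)(x,y)$ is a multiple of the one-dimensional Hilbert transform of $ab$ evaluated at $x+y$, and its $|x+y|^{-1}$ tail is not integrable. The paper exploits exactly this positivity/tail phenomenon:
\begin{itemize}
\item It takes an odd $\Omega$ that is nonnegative for $\theta_1>0$ and singular at the diagonal direction, together with $f_N=\chi_{(0,1/2)}(u+v)\chi_{(-N,N)}(v)$ and $g_N=\chi_{(0,1/2)}(u+v)\chi_{(-N,N)}(u)$.
\item For $(x,y)\in(N/2,N)^2$ the integrand of $T_\Omega(f_N,g_N)(x,y)$ is nonnegative. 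It only samples the kernel at points $(s,t)$ with $s,t\in(x+y-\frac12,x+y)$, i.e.\ at distance $\sim N$ from the origin and angle $\lesssim 1/N$ from the diagonal.
\item At those angles $\Omega$ has size $\approx N^{1/q}$ up to logarithms. This gives $\|T_\Omega(f_N,g_N)\|_{L^p}\gtrsim N^{\frac2p+\frac1q-2}(\log N)^{-\alpha/q}$, against $\|f_N\|_{L^{p_1}}\|g_N\|_{L^{p_2}}\approx N^{1/p}$.
\end{itemize}
So the decisive scales are the large ones, where the fine angular structure of $\Omega$ is resolved. They are not scales where it ``behaves like a smooth kernel''.

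\textbf{The endpoint $\frac1p+\frac1q=2$.} This is the real difficulty, and your plan has no source for the logarithm there. With your flat $\Omega_\epsilon$ and the strips above (taking $N\gg\epsilon^{-1}$), one finds
\[
T_{\Omega_\epsilon}(f_N,g_N)(x,y)\approx\min\{(x+y)^{-1},\epsilon^{-1}(x+y)^{-2}\}\quad\text{for } 2<x+y<N,
\]
hence $\|T_{\Omega_\epsilon}(f_N,g_N)\|_{L^p}/(\|f_N\|_{L^{p_1}}\|g_N\|_{L^{p_2}}\|\Omega_\epsilon\|_{L^q})\approx \epsilon^{2-\frac1p-\frac1q}$.
\begin{itemize}
\item Combined with the uniform boundedness principle, this does settle the case $\frac1p+\frac1q>2$.
\item At the endpoint this ratio is $\approx 1$, so it gives nothing.
\item Replacing the strips by truncated profiles $w^{-1/p_i}\chi_{(\eta,1/2)}(w)$ in $w=u+v$ produces a factor $(\log\frac1\eta)^{1/p}$ in the output, but the same factor appears in the input norms and cancels it.
\end{itemize}

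The missing idea is to match the profile of $\Omega$ to that of the test functions across all scales simultaneously. The paper uses a single $\Omega$ of size $|\theta_1-\theta_2|^{-1/q}(\log\frac{1}{|\theta_1-\theta_2|})^{-\alpha/q}$ near the diagonal, times $\operatorname{sgn}(\theta_1)$, with $\alpha>1$. The test functions have profiles $|u+v|^{-1/p_i}(\log\frac1{|u+v|})^{-\alpha/p_i}$. Then:
\begin{itemize}
\item On the region $2u\le v\le 4u$ the three profiles multiply to $\approx u^{-2}(\log\frac{x+y}{u})^{-2\alpha}$; this is exactly where $\frac1p+\frac1q=2$ enters.
\item The $v$-integration leaves $\int_0^{1/8}u^{-1}(\log\frac{x+y}{u})^{-2\alpha}\,du\gtrsim(\log(x+y))^{1-2\alpha}$.
\item This yields $\|T_\Omega(f_N,g_N)\|_{L^p}\gtrsim N^{1/p}(\log N)^{\frac1p+1-2\alpha}$, which is unbounded for $\alpha$ close to $1$ because $\frac1p>1$.
\end{itemize}

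\textbf{Step 4.} Your appeal to \eqref{E:decay-K0} and Proposition~\ref{P:j<0} is not valid. They control only the low-frequency pieces $T_j$ with $j\le 0$, not the full operators, and the full operators are precisely what blow up in this range. The uniform boundedness argument is the right way to glue pieces together. However, it needs a family with unbounded ratios, and at the endpoint you have not produced one.
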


Proposition~\ref{P:counterexample} is proved in Section~\ref{S:counterexample}. Our examples are inspired by the corresponding examples for the Coifman-Meyer type operators with rough kernels~\cite{GHHP,DS2024} and by the examples showing the failure of the $L^{p_1}(\R^2) \times L^{p_2}(\R^2) \rightarrow L^1(\R^2)$ boundedness of the triangular Hilbert transform~\cite{KTZ,L2025}.

\medskip
\textbf{Notation.} We use the symbol $A \lesssim B$ to indicate that $A \leq CB$ for some constant
$C>0$ that may vary from line to line. The expression $A \gtrsim B$ is defined analogously. We write $A \approx B$ if $A \lesssim B$ and $A \gtrsim B$ hold simultaneously.
For a function $f$ on $\R^d$, $d\geq 1$, and for $\lambda>0$, we denote $f_\lambda=\lambda^d f(\lambda \cdot)$. If $f$ is a function on $\R^2$ and $\phi$ is a function on $\R$, their partial convolutions are given by
\begin{equation}\label{E:convolution-first}
f \ast^{(1)} \phi(x,y)=\phi \ast^{(1)} f(x,y)=\int_{\R} f(x-t,y) \phi(t)\,dt,
\end{equation}
\begin{equation}\label{E:convolution-second}
f \ast^{(2)} \phi(x,y)=\phi \ast^{(2)} f(x,y)=\int_{\R} f(x,y-t) \phi(t)\,dt,
\end{equation}
assuming that the right-hand sides of~\eqref{E:convolution-first} and~\eqref{E:convolution-second} are well defined.

\section{Proof of Proposition~\ref{P:smoothing-diagonal}: a smoothing inequality}\label{S:smoothing}

In order to prove Proposition~\ref{P:smoothing-diagonal}, we will first consider a single-scale variant of the operator $T_j^1$. This operator will be denoted by $T_{j}^{0,1}$ and defined as the twisted paraproduct~\eqref{E:twisted-paraproduct} associated with the kernel $K_j^{0,1}$. We will establish the following smoothing inequality for the operator $T_{j}^{0,1}$.

\begin{lemma}\label{P:smoothing}
Let $1<q\leq \infty$.
There exists a constant $\delta>0$ depending only on $q$ such that
	\begin{equation}
	\|T_j^{0,1}\|_{L^{2}(\R^2)\times L^{2}(\R^2)\to L^1(\R^2)}\lesssim 2^{-\delta j} \|\Omega\|_{L^q(\S^1)}.
        \end{equation}
\end{lemma}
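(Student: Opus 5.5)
We prove the single-scale smoothing estimate by writing $T_j^{0,1}$ as a bilinear Fourier multiplier and reducing to an $L^2\times L^2\to L^1$ bound for a multiplier that is supported where both frequencies are large and that decays like $2^{-\delta j}$.

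\textbf{Step 1: rewrite the operator.} Since the kernel of $T_j^{0,1}$ is $K_j^{0,1}$, Fourier inversion gives
\[
T_j^{0,1}(f,g)(x,y)=\int_{\R^2} \wh{K_j^{0,1}}(\xi,\eta)\, (\mathcal{F}_1 f)(\xi,y)\, (\mathcal{F}_2 g)(x,\eta)\, e^{2\pi i (x\xi+y\eta)}\,d\xi\, d\eta ,
\]
where $\mathcal{F}_1$ and $\mathcal{F}_2$ denote the partial Fourier transforms in the first and second variables. The symbol $\wh{K_j^{0,1}}$ has three features:
\begin{itemize}[label={}]
\item it is supported in the region $|(\xi,\eta)|\approx 2^j$, with both $|\xi|,|\eta|\gtrsim 1$;
\item by \eqref{E:decay-K0} it obeys $|\wh{K_j^{0,1}}|\lesssim 2^{-\delta_0 j}\|\Omega\|_{L^q(\S^1)}$ for any $0<\delta_0<1/q'$;
\item since $K^0$ is supported in the annulus $|(y,z)|\approx 1$, its derivatives satisfy $|\partial^\alpha \wh{K^0}|\lesssim \|\Omega\|_{L^1(\S^1)}$ for every $\alpha$.
\end{itemize}
So the symbol is \emph{small} in sup norm, $O(2^{-\delta_0 j})$, and \emph{smooth at unit scale} uniformly in $j$. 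It is not smooth at the natural scale $2^j$.

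\textbf{Step 2: apply the bilinear multiplier theorem from \cite{HLpreprint}.} Rescale frequencies by $2^{-j}$. The multiplier $m_j(\xi,\eta)=\wh{K_j^{0,1}}(2^j\xi,2^j\eta)$ is then supported in $\{|\xi|,|\eta|\lesssim 1,\ \max(|\xi|,|\eta|)\approx 1\}$, with $\|m_j\|_\infty\lesssim 2^{-\delta_0 j}$ and $\|\partial^\alpha m_j\|_\infty\lesssim 2^{j|\alpha|}$. I expect the theorem of \cite{HLpreprint} to give a bound of the form
\[
\|T_{m}\|_{L^2\times L^2\to L^1}\lesssim \|m\|_{L^\infty}^{\theta}\,\|m\|_{C^N}^{1-\theta}\cdot(\text{loss})
\]
for twisted-type multipliers localized at $|\xi|\approx|\eta|$-type annuli, or some variant of it. Interpolating against the unit-scale smoothness, with derivative cost $2^{jN}$, would be useless. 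The key point is therefore to use this theorem in its LGC form, which only requires smoothness at a scale $2^{-j\epsilon}$ relative to the support. So the first thing I would try is to mollify $\wh{K^0}$ at frequency scale $2^{j(1-\epsilon)}$, that is, replace $K^0$ by $K^0\ast\Phi_{2^{j(1-\epsilon)}}$.

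\textbf{Step 3: control the two pieces.}
\begin{itemize}[label={}]
\item \emph{Mollification error.} This is a kernel whose symbol is bounded by $2^{-\delta_0 j}$ and whose $L^1$-mass is controlled. We bound it trivially: the twisted form with kernel $G$ satisfies $\|T_G(f,g)\|_{L^1}\le \|G\|_{L^1}\|f\|_2\|g\|_2$, by Cauchy--Schwarz in each fiber. We must show the error has small $L^1$ norm. Here I would use the $L^q$ integrability of $\Omega$, with $q>1$, to get a gain $2^{-c\epsilon j}$ from the angular smoothing.
\item \emph{Mollified main term.} This is handled by the multiplier theorem of \cite{HLpreprint}, which delivers $2^{-\delta j}$ provided its required derivative losses are at most $2^{\epsilon' j}$, with $\epsilon'$ small compared to $\delta_0$.
\end{itemize}

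\textbf{Main obstacle.} The hardest step is verifying the hypotheses of the multiplier theorem, in particular the nondegeneracy or curvature-type condition the LGC method needs. The kernel here is flat (homogeneous), so any gain must come from the roughness/decay of $\wh{K^0}$ and from the diagonal localization $|\xi|\approx 2^j$ or $|\eta|\approx 2^j$, not from curvature. I would check that, on the diagonal support, the theorem reduces to an $L^2$ estimate in which the time-frequency correlation is governed by $\|m_j\|_{L^2}$ or $\|m_j\|_\infty$ rather than by curvature.
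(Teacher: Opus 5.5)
Your setup is sound: the partial-Fourier form of the twisted paraproduct, the sup bound $|\wh{K_j^{0,1}}|\lesssim 2^{-j(1/q'-\varepsilon)}\|\Omega\|_{L^q(\S^1)}$, the trivial bound $\|T_G(f,g)\|_{L^1}\le \|G\|_{L^1}\|f\|_{L^2}\|g\|_{L^2}$, and the choice of the Hsu--Lin multiplier theorem are all correct. But the decisive estimate is never produced, and the splitting you propose in its place is degenerate.

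Convolving $K^0$ with $\Phi_{2^{j(1-\epsilon)}}$ multiplies $\wh{K^0}$ by $\wh{\Phi}(2^{-j(1-\epsilon)}\,\cdot)$, which is negligible where $|(\xi,\eta)|\approx 2^j$. Mollifying $\wh{K^0}$ itself at a frequency scale $\gg 1$ instead amounts to multiplying $K^0$ by a spatial cutoff of width $\ll 1$ around the origin. That annihilates $K^0$ up to tails, since $K^0$ lives on $|(y,z)|\approx 1$. (Frequency smoothness at scale $s$ is spatial localization at scale $s^{-1}$, so $\wh{K^0}$ is smooth at unit scale and no coarser. After your rescaling to the unit frequency ball, each derivative of $m_j$ must therefore cost $2^j$.)

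Either way, your ``main term'' is essentially zero and your ``error'' is the whole operator. The trivial $L^1$-kernel bound gives that error no decay. Indeed $\|K_j^{0,1}\|_{L^1}\lesssim\|\Omega\|_{L^1(\S^1)}$, but this is not $o(1)$ uniformly over $\|\Omega\|_{L^q(\S^1)}\le 1$: if $\Omega$ oscillates on $\S^1$ at frequency $\approx 2^j$, essentially all of $K^0$ sits at frequencies $\approx 2^j$. Nor does $\Omega\in L^q$ give any rate for $\|\Omega-\Omega_\epsilon\|_{L^1(\S^1)}$ under angular mollification.

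What the lemma actually needs is a gain over the fiberwise Cauchy--Schwarz bound $\|\mathcal T_m\|_{L^2\times L^2\to L^1}\lesssim\lambda\|m\|_{L^\infty}$ for $m$ supported in a ball of radius $\lambda$. For $\lambda\approx 2^j$ that bound only yields $2^{j(1/q+\varepsilon)}\|\Omega\|_{L^q(\S^1)}$. The gain needs neither curvature nor smoothness, so your ``main obstacle'' does not arise. Inserting the support and sup bound of $m$ directly into the definitions of $\|m\|_u$ and $\|m\|_U$ gives $\|m\|_u\lesssim\lambda^{1/2}\|m\|_{L^\infty}$ and $\|m\|_U\lesssim\lambda^{5/8}\|m\|_{L^\infty}$. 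Hence $\|\mathcal T_m\|_{L^2\times L^2\to L^1}\lesssim\lambda^{3/5}\|m\|_{L^\infty}$.

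Two further ingredients are also missing.

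First, with $\lambda\approx 2^j$ the bound $\lambda^{3/5}\|m\|_{L^\infty}$ equals $2^{j(1/q-2/5+\varepsilon)}\|\Omega\|_{L^q(\S^1)}$, which decays only for $q>5/2$. To cover all $q>1$, fix $f,g$ and regard $\Omega\mapsto\mathcal T_m(f,g)$ as a linear map into $L^1(\R^2)$. Then apply Riesz--Thorin between this estimate and the trivial bound $\lesssim\|\Omega\|_{L^1(\S^1)}$ coming from the $L^1$ norm of the kernel.

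Second, the theorem carries the cutoff $1_{[0,1]^2}(x,y)$, and removing it is a genuine step you do not address. The paper proceeds as follows:
\begin{enumerate}
\item It splits the diagonal symbol into the $O(j)$ pieces $\wh{K^0}(\xi,\eta)\wh{\psi}(2^{-j}\xi)\wh{\psi}(2^{-k}\eta)$, $1\le k\le j$, and their symmetric counterparts.
\item It rescales $\eta$ by $2^{j-k}$. Both inputs are then Littlewood--Paley localized at scale $2^j$, while the rescaled kernel $2^{j-k}K^0(s,2^{j-k}t)$ stays supported in $|(s,t)|\lesssim 1$.
\item It combines a partition of unity into unit squares with commutator estimates between the bump functions and the partial Littlewood--Paley projections, which gain $2^{-j}$.
\end{enumerate}
Without the anisotropic rescaling, the commutator in the second variable would gain only $2^{-k}$, which is $O(1)$ for small $k$.
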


We denote by $\mathcal{F}^{(1)}f(\xi,y)=\int_{\mathbb{R}} f(x,y)e^{-2\pi ix\xi}dx$
  the Fourier transform in the first variable for $f\in L^1(\R^2)$  and by $\mathcal{F}^{(2)}f(x,\eta)=\int_{\mathbb{R}} f(x,y)e^{-2\pi iy\eta}dy$ the one in the second variable. Let $T_K$ be the twisted paraproduct defined in~\eqref{E:twisted-paraproduct}.
We remark that for $m=\widehat K$, the operator $T_K$ can be written as
\begin{equation}\label{E:twisted-symbol}
T_K(f,g)(x,y)=\int_{\R^2} \mathcal{F}^{(1)}f(\xi,y) \mathcal{F}^{(2)}g(x,\h) m(\xi,\h) e^{2\pi i (x\xi+y\h)}d\xi d\h.
\end{equation}
Throughout this section, we occasionally denote the operator~\eqref{E:twisted-symbol} as $\widetilde{T}_m$ and call it ``the twisted paraproduct associated with the symbol $m$''. We point out that if our interest lies in establishing $L^2(\R^2) \times L^2(\R^2) \rightarrow L^1(\R^2)$ bounds for the operator $\widetilde{T}_m$ (which is the case in this section), then Plancherel's identity allows us to drop the partial Fourier transforms in~\eqref{E:twisted-symbol}.

Lemma~\ref{P:smoothing} follows as a consequence of the following bilinear Fourier multiplier theorem from \cite{HLpreprint}.

\begin{theorem}[Hsu, Lin {\cite[Theorem 1.6]{HLpreprint}}]\label{thm_universal_multiplier_bound}
    For a symbol \(m\in L^\infty(\R^2)\) and functions \(F,G\in (L^1\cap L^\infty)(\R^2)\), consider the fiberwise bilinear Fourier integral operator $\mathcal T_m$ 
    \begin{equation}\label{carlyT}
        \mathcal T_m(F,G)(x,y):=
        1_{[0,1]^2}(x,y)
        \int_{\R^2}
            F(\xi,y)
            G(x,\eta)
            m(\xi,\eta)
            e^{2\pi i(x\xi+y\eta)}
        d\xi d\eta.
    \end{equation}
    The following estimate holds:
    \begin{equation*}
        \|\mathcal T_m(F,G)\|_{L^1(\R^2)}
        \lesssim
        \|m\|^{\frac{1}{5}}_u
        \|m\|^{\frac{4}{5}}_U
        \|F\|_{L^2(\R^2)}
        \|G\|_{L^2(\R^2)},
    \end{equation*}
    where the two quantities $\|m\|_{u}$ and $\|m\|_{U}$ are defined by the following formulas:
    \begin{equation}\label{mlittleunorm}
\|m\|_{u}:=\left\|\mathcal{F}^{(1)}(\mathcal{D}_{(0,s)}m)(x,\eta)  \right\|^{\frac{1}{2}}_{L^{\infty}_{s\eta}(\mathbb{R}^{2})L^{2}_{x}([-1,1])} ,
    \end{equation}
    \begin{equation}\label{mcapitalunorm}
\|m\|_{U}:=\left\|\int_{\mathbb{R}}\mathcal{D}_{(0,s)}\mathcal{D}_{(u,v)}m(\xi, \eta)d\xi   \right\|^{\frac{1}{4}}_{L^{\infty}_{u\eta}(\mathbb{R}^{2})L^{1}_{s}(\mathbb{R})L^{2}_{v}(\mathbb{R})},
    \end{equation}
    where the multiplicative derivative $\mathcal{D}_{(y_{1},y_{2})}$ is defined by
    \begin{equation}\label{multideri_def}
(\mathcal{D}_{(y_{1},y_{2})}f)(x_{1},x_{2}):=f(x_{1}+y_{1} ,x_{2}+y_{2})\overline{f(x_{1},x_{2})}\, ,
    \end{equation}
    and the iterated Lebesgue norm is defined by
\begin{equation}
    \|f(x,y)\|_{L^{p}_{x}(A)L^{q}_{y}(B)}:=\left\| \|f(x,y)\|_{L^{p}_{x} (A)} \right\|_{L^{q}_{y}(B)}\, .
\end{equation}
\end{theorem}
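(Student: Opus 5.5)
We would argue by duality and two successive applications of the Cauchy--Schwarz inequality, in the spirit of the LGC method of Gaitan and Lie. Since $\mathcal T_m(F,G)$ is supported in $[0,1]^2$, it suffices to bound the trilinear form
\[
\Lambda(F,G,h)=\int_{[0,1]^2} h(x,y)\int_{\R^2} F(\xi,y)G(x,\eta)m(\xi,\eta)e^{2\pi i(x\xi+y\eta)}\,d\xi\, d\eta\, dx\, dy
\]
for $\|h\|_{L^\infty}\le 1$. We group the variables as $\Lambda=\int_{[0,1]}\int_\R G(x,\eta)\,\Phi(x,\eta)\,d\eta\,dx$, where
\[
\Phi(x,\eta)=\int_{[0,1]}\int_\R h(x,y)F(\xi,y)m(\xi,\eta)e^{2\pi i(x\xi+y\eta)}\,d\xi\,dy .
\]
Cauchy--Schwarz in $(x,\eta)$ then reduces the problem to bounding $\|\Phi\|_{L^2([0,1]\times\R)}^2$ by $\|m\|_u^{2/5}\|m\|_U^{8/5}\|F\|_{L^2}^2$.

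Next we expand the square. We substitute $\xi'=\xi+u$ and $y'=y+s$, and carry out the $\eta$- and $x$-integrations first. The $\eta$-integral produces a Fourier transform in the second variable of $\mathcal D_{(u,0)}m(\xi,\cdot)$. The $x$-integral over $[0,1]$ produces a localized Fourier transform of $x\mapsto h(x,y)\overline{h(x,y+s)}$ at frequency $u$, which is bounded by $1$. After a further Cauchy--Schwarz in $(\xi,y)$ against $F(\xi,y)\overline{F(\xi+u,y+s)}$, the remaining quantity involves the multiplicative derivative $\mathcal D_{(0,s)}\mathcal D_{(u,v)}m$ integrated in $\xi$. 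Here $v$ arises when the square of the $\eta$-Fourier transform is unfolded via Plancherel. This is exactly the integrand appearing in $\|m\|_U^4$, with the $L^\infty$ norm taken in $(u,\eta)$, $L^1$ in $s$ and $L^2$ in $v$.

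The two norms enter through a splitting of the shift parameter $s$ (equivalently $u$) into a near-diagonal region and its complement, with a threshold $\lambda$ to be optimized.
\begin{itemize}
\item On the near-diagonal region we do not expand further. Instead we use the single-derivative bound, in which $\mathcal F^{(1)}(\mathcal D_{(0,s)}m)$ is measured in $L^2_x([-1,1])$ uniformly in $(s,\eta)$. Here the localization $x\in[0,1]$ explains the restriction to $[-1,1]$ after taking differences. This gives a bound of the form $\lambda\|m\|_u^2$.
\item Off the diagonal, the second Cauchy--Schwarz gives a bound of the form $\lambda^{-1/4}\|m\|_U^{2}$, or a similar negative power of $\lambda$, coming from $L^1_s$ integrability against an $L^\infty$ bound on the $F$-correlation.
\end{itemize}
Optimizing in $\lambda$ produces the geometric mean with weights $1/5$ and $4/5$.

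We expect the main obstacle to be this bookkeeping. One has to arrange the order of the Cauchy--Schwarz steps and the changes of variables so that precisely the iterated norms $L^\infty_{u\eta}L^1_sL^2_v$ and $L^\infty_{s\eta}L^2_x([-1,1])$ appear, with no loss. One also has to check that the cutoff in $s$ interacts correctly with both estimates, so that the exponents come out as stated. If the naive splitting does not give $1/5$ and $4/5$, we would instead try to interpolate the two Cauchy--Schwarz bounds directly, using H\"older's inequality in $s$.
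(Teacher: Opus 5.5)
The paper quotes this theorem from Hsu--Lin without proof, so your sketch has to stand on its own. It does not yet do so.

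\textbf{A repairable bookkeeping issue.} After you remove $G$ by Cauchy--Schwarz in $(x,\eta)$, the $\eta$-integral in $\|\Phi\|^2$ is $\overline{\mathcal{F}^{(2)}(\mathcal{D}_{(u,0)}m)(\xi,-s)}$. This is the transpose of the object in $\|m\|_u$, with the roles of $\xi$ and $\eta$ swapped. Your $s=y'-y\in[-1,1]$ is a spatial shift entering only through a phase, not the frequency shift of $\mathcal{D}_{(0,s)}$.

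This is harmless in itself. By swapping $(x,\xi)\leftrightarrow(y,\eta)$ and $F\leftrightarrow G$ one has $\|\mathcal T_m\|=\|\mathcal T_{m^T}\|$ for $m^T(\xi,\eta)=m(\eta,\xi)$. Alternatively, remove $F$ in $(\xi,y)$ instead. Set $\Psi(\xi,y)=\int_0^1\int_\R h(x,y)G(x,\eta)m(\xi,\eta)e^{2\pi i(x\xi+y\eta)}\,d\eta\,dx$, $A(x,x',s)=\int_0^1 h(x,y)\overline{h(x',y)}e^{-2\pi i ys}\,dy$ and $K(z,\eta,s)=\mathcal{F}^{(1)}(\mathcal{D}_{(0,s)}m)(z,\eta)$. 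One then gets
\[
\|\Psi\|_{L^2(\R\times[0,1])}^2=\int_{[0,1]^2}\int_{\R^2}G(x,\eta)\overline{G(x',\eta+s)}\,A(x,x',s)\,\overline{K(x-x',\eta,s)}\,d\eta\,ds\,dx\,dx',
\]
and $\|m\|_U$ enters through the Plancherel identity
\[
\int_\R\mathcal{D}_{(0,s)}\mathcal{D}_{(u,v)}m(\xi,\eta)\,d\xi=\int_\R K(z,\eta+v,s)\overline{K(z,\eta,s)}\,e^{2\pi i zu}\,dz .
\]
In this form your near-diagonal claim is right. Let $\mu(z)=\sup_{s,\eta}|K(z,\eta,s)|$, so that $\|\mu\|_{L^2([-1,1])}=\|m\|_u^2$. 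Using $|A|\le 1$, the bound $\int_{|s|\le\lambda}\int_\R|G(x,\eta)||G(x',\eta+s)|\,d\eta\,ds\le 2\lambda\|G(x,\cdot)\|_{L^2}\|G(x',\cdot)\|_{L^2}$, and Young's inequality in $x-x'$, the part $|s|\le\lambda$ is $\lesssim\lambda\|m\|_u^2\|G\|_{L^2}^2$.

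\textbf{The genuine gap is the off-diagonal estimate.} The factor $\lambda^{-1/4}$ is read off from the target exponents rather than derived. In fact no bound $\lambda^{-a}\|m\|_U^2\|G\|_{L^2}^2$ with $a>0$ can hold for the part $|s|>\lambda$ for all $\lambda$. The same holds, after transposing, for a threshold on your $u$.

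For a counterexample, take $S=[0,1]\cup[N,N+1]$, $m=\chi_{[0,1]}(\xi)\chi_S(\eta)$, $G=\chi_{[0,1]}(x)\chi_S(\eta)$ and $h(x,y)=\frac12(1+e^{-2\pi iNy})$. Then $\|m\|_U\lesssim 1$ and $\|G\|_{L^2}^2=2$ uniformly in $N$. For $2<\lambda<N-2$, the part $|s|>\lambda$ is exactly the cross term $2\operatorname{Re}\langle\Psi_1,\Psi_2\rangle$ between the contributions of the two blocks of $S$. A direct computation shows it tends to a positive constant as $N\to\infty$; taking $\lambda=N/2$ gives the contradiction. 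The underlying reason is that restricting to $|s|>\lambda$ merely truncates an unweighted $L^1_s$ norm, while correlations of general inputs do not decay in $s$.

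The natural second Cauchy--Schwarz does not help either. Remove $A$ using $\int_{[0,1]^2}\int_\R|A|^2\,ds\,dx\,dx'\le 1$, then apply the identity and the $L^\infty_{u\eta}L^1_sL^2_v$ norm. This bounds the square of the off-diagonal part by $\|m\|_U^4\|\Gamma\|_{L^2_v}$, where $\Gamma(v)=\int_0^1\int_\R|G(x,\eta)|^2|G(x,\eta+v)|^2\,d\eta\,dx$. This is a four-fold quantity in $G$ and is not controlled by $\|G\|_{L^2}^4$; try $G=\epsilon^{-1}\chi_{[0,\epsilon^2]}(x)\chi_{[0,1]}(\eta)$.

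A proof must therefore supply a mechanism that converts this into $L^2$ norms of the inputs with an explicit loss. By the example, that mechanism cannot be a threshold in the shift with $\lambda$-uniform bounds. This is exactly the step you defer as ``bookkeeping'', so as it stands your argument establishes only the near-diagonal piece.
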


We will apply Theorem~\ref{thm_universal_multiplier_bound} in the form of the following corollary.

\begin{corollary}\label{C:corollary-multiplier}
Let $\lambda>0$. Assume that $m\in L^\infty(\R^2)$ is supported in the set of those $(\xi,\eta) \in \R^2$ for which $|(\xi,\eta)| \leq \lambda$. Then the operator $\mathcal T_m$ defined by~\eqref{carlyT} satisfies
\[
\|\mathcal T_m\|_{L^2(\R^2) \times L^2(\R^2) \rightarrow L^1(\R^2)} \lesssim \lambda^{\frac{3}{5}} \|m\|_{L^\infty(\R^2)}.
\]
\end{corollary}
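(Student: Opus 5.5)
We estimate $\|m\|_u$ and $\|m\|_U$ from Theorem~\ref{thm_universal_multiplier_bound} separately and combine them. The claimed exponent is $\frac35=\frac15\cdot\frac12+\frac45\cdot\frac58$. So the plan is to prove $\|m\|_u\lesssim \lambda^{1/2}\|m\|_{L^\infty}$ and $\|m\|_U\lesssim \lambda^{5/8}\|m\|_{L^\infty}$. By homogeneity we may assume $\|m\|_{L^\infty}=1$. Throughout we use that $m$ is supported in the disc of radius $\lambda$, so every $\xi$-section and $\eta$-section of the support lies in an interval of length at most $2\lambda$.

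For $\|m\|_u$, fix $s,\eta$. The function $\xi\mapsto \mathcal D_{(0,s)}m(\xi,\eta)=m(\xi,\eta+s)\overline{m(\xi,\eta)}$ is bounded by $1$ and supported in $\{|\xi|\le\lambda\}$. Its $L^2_\xi$ norm is therefore at most $(2\lambda)^{1/2}$. By Plancherel in the first variable,
\[
\|\mathcal F^{(1)}(\mathcal D_{(0,s)}m)(\cdot,\eta)\|_{L^2_x([-1,1])}\le \|\mathcal F^{(1)}(\mathcal D_{(0,s)}m)(\cdot,\eta)\|_{L^2_x(\R)}\lesssim \lambda^{1/2}.
\]
Taking the square root as in~\eqref{mlittleunorm} gives $\|m\|_u\lesssim\lambda^{1/4}$. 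This is better than needed, and only an upper bound is required anyway.

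For $\|m\|_U$, fix $u,\eta$ and set
\[
I(s,v)=\int_\R m(\xi+u,\eta+s+v)\,\overline{m(\xi+u,\eta+v)}\,\overline{m(\xi,\eta+s)}\,m(\xi,\eta)\,d\xi .
\]
The factor $m(\xi,\eta)$ forces $|\xi|\le\lambda$ and $|\eta|\le\lambda$. The factor $m(\xi,\eta+s)$ forces $|\eta+s|\le\lambda$, so $s$ ranges over a set of length at most $2\lambda$. The factor $m(\xi+u,\eta+v)$ forces $|\eta+v|\le\lambda$, so $v$ ranges over a set of length at most $2\lambda$. We also have the pointwise bound $|I(s,v)|\le 2\lambda$. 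Hence $\|I(s,\cdot)\|_{L^2_v}\lesssim \lambda\cdot\lambda^{1/2}$ for each $s$, and integrating over the allowed $s$ gives $\|I\|_{L^1_sL^2_v}\lesssim \lambda^{5/2}$, uniformly in $u,\eta$. Taking the fourth root as in~\eqref{mcapitalunorm} gives $\|m\|_U\lesssim\lambda^{5/8}$.

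Combining, $\|m\|_u^{1/5}\|m\|_U^{4/5}\lesssim \lambda^{1/20}\lambda^{1/2}=\lambda^{11/20}$. This is at most $\lambda^{3/5}$ when $\lambda\ge1$. For $\lambda<1$ the $u$-bound above is not enough, and we instead use the cruder estimate $\|m\|_u\lesssim\lambda^{1/2}$. That estimate is not implied by the argument above when $\lambda<1$, since then $\lambda^{1/4}>\lambda^{1/2}$. A cleaner way is to note that the corollary is applied only with $\lambda=2^{j}\ge1$, or to reduce to $\lambda\ge1$ by scaling. The scaling in question is $m\mapsto m(\lambda\cdot)$, which replaces $F,G$ by rescaled functions and changes the unit square; the resulting localization can be handled by summing over translates of the unit square. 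I expect this scaling and localization step to be the only delicate point.

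The main obstacle is therefore bookkeeping rather than the estimates themselves. One must check that the supports in $s$ and $v$ are really localized as claimed for fixed $\eta$, and keep careful track of the exponents $\frac14$ and $\frac12$ in the definitions~\eqref{mlittleunorm} and~\eqref{mcapitalunorm}, so that the final power of $\lambda$ is at most $\frac35$ in the regime where the corollary is used.
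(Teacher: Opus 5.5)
Your bound $\|m\|_U\lesssim\lambda^{5/8}\|m\|_{L^\infty(\R^2)}$ is correct and matches the paper. The integrand vanishes unless $|s|,|v|\le 2\lambda$, uniformly in $(u,\eta)$, so the order of the iterated norms does not matter there.

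The gap is in $\|m\|_u$, and because of it the corollary is not proved for $\lambda<1$, which is part of the statement. Neither of your fallbacks closes it:
\begin{itemize}
\item Noting that the corollary is only applied with $\lambda=2^j\ge1$ proves a weaker statement.
\item The rescaling $m\mapsto m(\lambda\,\cdot)$ does not commute with the cutoff $1_{[0,1]^2}$ in \eqref{carlyT}. For $\lambda<1$ it only transfers the radius-one bound, i.e.\ it gives $O(\|m\|_{L^\infty})$, not $\lambda^{3/5}\|m\|_{L^\infty}$.
\end{itemize}

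The Plancherel step itself is also problematic. It controls $\sup_{s,\eta}\|\cdot\|_{L^2_x([-1,1])}$. Under the iterated-norm convention stated in the paper, however, the $L^\infty_{s\eta}$ norm in \eqref{mlittleunorm} is the inner one, and then $\|m\|_u\lesssim\lambda^{1/4}$ is false. Take $m(\xi,\eta)=e^{2\pi i\xi\eta}\chi_{[-\lambda/2,\lambda/2]^2}(\xi,\eta)$ with $\lambda\ge2$. Choosing $\eta=0$, $s=x$ shows the supremum is $\ge\lambda$ at every $x\in[-1,1]$, so $\|m\|_u\gtrsim\lambda^{1/2}$. Your intermediate exponent $\frac{11}{20}$ should therefore not be relied on.

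The missing observation is the pointwise bound
\[
|\mathcal F^{(1)}(\mathcal D_{(0,s)}m)(x,\eta)|\le\int_{\R}|m(\xi,\eta+s)|\,|m(\xi,\eta)|\,d\xi\le 2\lambda\|m\|_{L^\infty(\R^2)}^2,
\]
valid for all $x,s,\eta$. Since $[-1,1]$ has finite measure, this gives $\|m\|_u\lesssim\lambda^{1/2}\|m\|_{L^\infty(\R^2)}$ for every $\lambda>0$, whatever the order of the norms. Combined with your $U$-bound, Theorem~\ref{thm_universal_multiplier_bound} then yields $\lambda^{\frac15\cdot\frac12+\frac45\cdot\frac58}=\lambda^{3/5}$ uniformly in $\lambda$; this is exactly the paper's proof.

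Alternatively, for $\lambda\le1$ you can bound $\mathcal T_m$ directly by Cauchy--Schwarz:
\[
\|\mathcal T_m(F,G)\|_{L^1(\R^2)}\le 2\lambda\|m\|_{L^\infty(\R^2)}\|F\|_{L^2(\R^2)}\|G\|_{L^2(\R^2)},
\]
and $2\lambda\le2\lambda^{3/5}$ in that range.
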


\begin{proof}
We observe that
\[
\mathcal{D}_{(0,s)}m(\xi,\eta)=m(\xi,\eta+s) \overline{m(\xi,\eta)}
\]
and
\[
\mathcal{D}_{(0,s)} \mathcal{D}_{(u,v)} m(\xi,\eta)
= m(\xi+u,\eta+v+s) \overline{m(\xi,\eta+s)} \overline{m(\xi+u,\eta+v)} m(\xi,\eta).
\]
Consequently,
\[
\|m\|_{u} \lesssim \left\|\int_{|(\xi,\eta)|\leq \lambda}\|m\|_{L^\infty(\R^2)}^{2}d\xi\right\|^{1/2}_{L^{\infty}_{s\eta}(\mathbb{R}^{2})L^{2}_{x}([-1,1])}\lesssim \lambda^{\frac{1}{2}} \|m\|_{L^\infty(\R^2)}
\]
and
\[
\|m\|_{U} \lesssim \left\|\int_{|(\xi,\eta)|\leq \lambda} \|m\|_{L^\infty(\R^2)}^4 d\xi\right\|^{\frac{1}{4}}_{L^{\infty}_{u\eta}(\mathbb{R}^{2})L^{1}_{s}(-2\lambda,2\lambda)L^{2}_{v}(-2\lambda,2\lambda)}\lesssim \lambda^{\frac{5}{8}}\|m\|_{L^\infty(\R^2)}.
\]
 Note that since $m$ is supported in $|(\xi,\eta)|\leq \lambda$, $L^{1}_{s}$, $L^{2}_{v}$ are integrated over $[-2\lambda,2\lambda]$.
The conclusion then follows from Theorem~\ref{thm_universal_multiplier_bound}.
\end{proof}

Let $k\in \N$.
In what follows, we denote by $C^k(\R^2)$ the space of $k$-times continuously differentiable functions on $\R^2$, equipped with the norm
\[
\|f\|_{C^k(\R^2)} = \max_{|\alpha|\leq k} \|\partial^\alpha f\|_{L^\infty(\R^2)},
\]
where the maximum is taken over all multiindices $\alpha$ of size at most $k$.

\begin{proof}[Proof of Lemma~\ref{P:smoothing}]
By symmetry, it suffices to establish the lemma for the twisted paraproduct associated with the symbol
\[
\sum_{k=1}^j \wh{K^0}(\xi,\eta) \wh{\psi}(2^{-j}\xi) \wh{\psi}(2^{-k}\eta).
\]
In addition, it is in fact enough to prove the desired bound for each of the pieces $\wh{K^0}(\xi,\eta) \wh{\psi}(2^{-j}\xi) \wh{\psi}(2^{-k}\eta)$ separately, as the exponential decay in $j$ will compensate for the blowup in the number of pieces. In what follows, we will thus fix $k\in \{1,\dots,j\}$ and focus on the corresponding single piece. As the $L^2(\R^2) \times L^2(\R^2) \rightarrow L^1(\R^2)$ norm of the twisted paraproduct is invariant under anisotropic scaling of the symbol, 
it is enough to consider the rescaled symbol
\[
m(\xi,\eta)=\wh{K^0}(\xi,2^{-j+k}\eta) \wh{\psi}(2^{-j}\xi) \wh{\psi}(2^{-j}\eta).
\]
The twisted paraproduct associated with this symbol will be denoted by $T^{0}_{j,k}$.

The symbol $m$ is supported in the set of those $(\xi,\eta) \in \R^2$ for which $|(\xi,\eta)| \approx 2^j$. Using~\eqref{E:decay-K0}, we thus deduce that for $\varepsilon \in (0,1/q')$,
\begin{equation*}
    \|m\|_{L^{\infty}(\R^2)}\lesssim 2^{-j(\frac{1}{q'}-\varepsilon)}\|\Omega\|_{L^{q}(\S^1)}. 
\end{equation*}
We point out that this inequality does not require the mean zero condition on $\Omega$, and thus holds for any $\Omega \in L^q(\S^1)$. 
Corollary~\ref{C:corollary-multiplier} then implies 
\begin{equation}\label{E:decay-q-5/2}
    \|\mathcal T_{m}(f,g)\|_{L^{1}(\R^2)}\lesssim 2^{j(\frac{1}{q}-\frac{2}{5}+\varepsilon)}\|\Omega\|_{L^{q}(\S^1)}\|f\|_{L^{2}(\R^2)}\|g\|_{L^{2}(\R^2)}
\end{equation}
for any pair of Schwartz functions $f$ and $g$.
When $q>\frac{5}{2}$, we can choose $\varepsilon>0$ in such a way that the right-hand side of~\eqref{E:decay-q-5/2} decays exponentially in $j$.
To go below $5/2$, we fix $f$, $g$, view $\mathcal T_m(f,g)$ as a linear operator in $\Omega$ and use the Riesz-Thorin interpolation theorem to interpolate between~\eqref{E:decay-q-5/2} and the trivial bound
\begin{equation*}
     \|\mathcal T_{m}(f,g)\|_{L^{1}(\R^2)}\leq
     \|\widecheck{m}\|_{L^1(\R^2)} \|f\|_{L^{2}(\R^2)}\|g\|_{L^{2}(\R^2)}
     \lesssim \|\Omega\|_{L^{1}(\S^1)}\|f\|_{L^{2}(\R^2)}\|g\|_{L^{2}(\R^2)}.
\end{equation*}
Then we obtain that for $q\in (1,\infty ]$, there is a constant $\delta=\delta(q)>0$ such that
\begin{equation}\label{eq_smoothing_local}
     \|\mathcal T_{m}(f,g)\|_{L^{1}(\R^2)}\lesssim 2^{-j\delta(q)}\|\Omega\|_{L^{q}(\S^1)}\|f\|_{L^{2}(\R^2)}\|g\|_{L^{2}(\R^2)}\, .
\end{equation}

When comparing the operators $\mathcal T_{m}$ and $T^{0}_{j,k}$, we
 observe that $\mathcal T_{m}$ contains an additional compact support $1_{[0,1]^{2}}(x,y)$. We will now demonstrate how to deduce the general smoothing inequality of Lemma~\ref{P:smoothing} from this compactly supported version. This is achieved through a standard procedure involving a commutator estimate ~\cite{CDR2021,BSpreprint}. 

Define the partial Littlewood-Paley operators
\begin{equation*}
\Delta_{j}^{(1)}F(x,y):=\psi_{2^j}\ast^{(1)}F(x,y), 
\quad
\Delta_{j}^{(2)}F(x,y):=\psi_{2^j}\ast^{(2)}F(x,y).
\end{equation*}
In addition, let $\widetilde{T}^0_{j,k}$ be the twisted paraproduct associated with the symbol $\wh{K^0}(\xi,2^{-j+k} \eta)$. We observe that
\begin{equation}\label{E:littlewood-paley-partial}
T^0_{j,k}(f,g)=\widetilde{T}^0_{j,k}(\Delta_{j}^{(1)}f,\, \Delta_{j}^{(2)}g).
\end{equation}
 Let $\eta$ be a smooth non-negative bump function supported in a small neighborhood of $[-\frac{1}{2},\frac{1}{2}]^{2}$ such that $\sum_{n\in\mathbb{Z}^{2}}\eta_{n}=1$, where $\eta_{n}(z)=\eta (z-n)$. With the help of~\eqref{E:littlewood-paley-partial}, we estimate 
\begin{equation*}
    \|T^{0}_{j,k}(f,g)\|_{L^{1}(\R^2)}\leq \sum_{n\in \mathbb{Z}^{2}}\int_{\mathbb{R}^2}\left|\widetilde{T}^0_{j,k}(\Delta_{j}^{(1)}f,\, \Delta_{j}^{(2)}g) \right|\eta_{n}\, .
\end{equation*}
Expanding the term $\widetilde{T}^0_{j,k}(\Delta_{j}^{(1)}f,\, \Delta_{j}^{(2)}g)(x,y)\eta_{n}(x,y)$, we obtain
\begin{equation*}
    \int_{\mathbb{R}^{2}} (\Delta_{j}^{(1)}f)(x-s,y)(\Delta_{j}^{(2)}g)(x,y-t)\eta_{n}(x,y)2^{j-k}K^{0}(s,2^{j-k}t)dsdt \, .
\end{equation*}
Note that $K^{0}(s,2^{j-k}t)$ is supported in the set where $|(s,t)|\lesssim 1$. Hence, we may insert two bump functions $\widetilde{\eta}_n(x-s,y)$, $\widetilde{\eta}_{n}(x,y-t)$ both equal to $1$ on the support of $\eta_{n}(x,y)K^{0}(s,2^{j-k}t)$ with the property $\|\widetilde{\eta}_{n}\|_{C^{1}(\R^2)}\lesssim 1$ and $\sum_{n\in \mathbb{Z}^2}\widetilde{\eta}_{n}\lesssim 1$. Therefore,
\begin{equation*}
    \|T^{0}_{j,k}(f,g)\|_{L^{1}(\R^2)}\leq  \sum_{n\in \mathbb{Z}^{2}}\int_{\mathbb{R}^2}\left|\widetilde{T}^0_{j,k}(\widetilde{\eta}_{n}\Delta_{j}^{(1)}f,\, \widetilde{\eta}_{n}\Delta_{j}^{(2)}g) \right|\eta_{n}
\end{equation*}
\begin{equation*}
    \leq  \sum_{n\in \mathbb{Z}^{2}}\int_{\mathbb{R}^2}\left|\widetilde{T}^0_{j,k}(\Delta_{j}^{(1)}(\widetilde{\eta}_{n}f),\, \Delta_{j}^{(2)}(\widetilde{\eta}_{n}g)) \right|\eta_{n}
\end{equation*}
\begin{equation*}
    + \sum_{n\in \mathbb{Z}^{2}}\int_{\mathbb{R}^2}\left|\widetilde{T}^0_{j,k}\left([\widetilde{\eta}_{n},\Delta_{j}^{(1)}]f,\, \widetilde{\eta}_{n}\Delta_{j}^{(2)}g\right) \right|\eta_{n}
\end{equation*}
\begin{equation*}
    +\sum_{n\in \mathbb{Z}^{2}}\int_{\mathbb{R}^{2}}\left|\widetilde{T}^0_{j,k}\left(\Delta_{j}^{(1)}(\widetilde{\eta}_{n}f),\, [\widetilde{\eta}_{n},\Delta_{j}^{(2)}]g\right) \right|\eta_{n}
\end{equation*}
\begin{equation*}
    =I_{1}+I_{2}+I_{3}\, ,
\end{equation*}
where the commutator is defined for $l=1,2$ as follows:
\begin{equation*}
[\widetilde{\eta}_{n},\Delta_{j}^{(l)}]F:=\widetilde{\eta}_{n}\Delta_{j}^{(l)}F-\Delta_{j}^{(l)}(\widetilde{\eta}_{n}F)\, .
\end{equation*}
The functions $\widetilde{\eta}_n f$ and $\widetilde{\eta}_n g$ are supported in a square of side-length roughly one, so
for $I_{1}$ we may directly apply \eqref{eq_smoothing_local} and obtain
\begin{equation*}
    |I_{1}|\lesssim 2^{-\delta j}\sum_{n\in \mathbb{Z}^{2}}\|\widetilde{\eta}_{n}f\|_{L^{2}(\R^2)}\|\widetilde{\eta}_{n}g\|_{L^{2}(\R^2)}\lesssim 2^{-\delta j}\|f\|_{L^{2}(\R^2)}\|g\|_{L^{2}(\R^2)}.
\end{equation*}
To estimate $I_{2}$, we use the mean value theorem and the rapid decay of the Schwartz function $\psi$ to deduce
\begin{equation*}
     \left|[\widetilde{\eta}_{n},\Delta_{j}^{(1)}]f\right|\lesssim 2^{-j}\|\widetilde{\eta}_{n}'\|_{L^{\infty}(\R^2)} \phi_{2^j}\ast^{(1)}|f|\lesssim 2^{-j}\phi_{2^j}\ast^{(1)}|f|\,,
\end{equation*}
where $\phi(t)=(1+|t|)^{-10}$. 
Hence, we have
\begin{equation*}
    I_{2}\lesssim 2^{-j}\int_{\R^2} 2^{j-k}|K^{0}(s,2^{j-k}t)|\int_{\R^2}(\phi_{2^j}\ast^{(1)}|f|) (x-s,y)(\phi_{2^j}\ast^{(2)}|g|) (x,y-t)
\end{equation*}
\begin{equation*}
\times \sum_{n\in\mathbb{Z}^{2}}\eta_{n}(x,y)dxdy\, dsdt\, .
\end{equation*}
Then by $\sum_{n\in\mathbb{Z}^{2}}\eta_{n}(x,y)= 1$ and the Cauchy-Schwarz inequality in the inner integral ($x,y$ variables), we have
\begin{equation*}
    I_{2}\lesssim 2^{-j}\int_{\R^2} 2^{j-k}|K^{0}(s,2^{j-k}t)|\cdot \|\phi_{2^j}{\ast}^{(1)}|f|\|_{L^{2}(\R^2)}\cdot \|\phi_{2^j}{\ast}^{(2)}|g|\|_{L^{2}(\R^2)}dsdt
\end{equation*}
\begin{equation*}
    \lesssim 2^{-j}\|K^{0}\|_{L^{1}(\R^2)}\|f\|_{L^{2}(\R^2)}\|g\|_{L^{2}(\R^2)}\lesssim 2^{-j}\|\Omega\|_{L^{1}(\S^1)}\|f\|_{L^{2}(\R^2)}\|g\|_{L^{2}(\R^2)}\, .
\end{equation*}
The term $I_{3}$ can be estimated similarly. Combining the estimates for $I_{1},I_{2},I_{3}$, we complete the proof of Lemma \ref{P:smoothing}.
\end{proof}

\begin{remark}
Using the wavelet decomposition technique introduced in~\cite{GHH} and further refined in~\cite{GHS}, one may prove the following analogue of~\cite[Theorem 1.3]{GHS}.

\textit{Let $1\leq q <4$ and let $m$ be a function in $L^q(\R^2) \cap C^{M_q}(\R^2)$, where $M_q=\lfloor \frac{2}{4-q}\rfloor+1$.
Then 
\begin{equation}\label{E:wavelet}
\|\widetilde{T}_m\|_{L^2(\R^2) \times L^2(\R^2) \rightarrow L^1(\R^2)} \lesssim \|m\|_{C^{M_q}(\R^2)}^{1-\frac{q}{4}} \|m\|_{L^q(\R^2)}^{\frac{q}{4}}.
\end{equation}}

The above result is obtained by mimicking the proof of~\cite[Theorem 1.3]{GHS}, applying the estimates fiberwise where appropriate. In particular, if $m$ is a function supported in the set of those $(\xi,\eta) \in \R^2$ for which $|(\xi,\eta)| \leq \lambda$, then inequality~\eqref{E:wavelet} applied with $q=1$ yields
\begin{equation}\label{E:second-multiplier-theorem}
\|\widetilde{T}_m\|_{L^2(\R^2) \times L^2(\R^2) \rightarrow L^1(\R^2)} \lesssim \lambda^{\frac{1}{2}} \|m\|_{C^1(\R^2)}.
\end{equation}
This estimate can then be used in place of Corollary~\ref{C:corollary-multiplier} to provide an alternative proof of Lemma~\ref{P:smoothing}. The presence of the $L^\infty$-norm of derivatives of the function $m$ on the right-hand side of~\eqref{E:second-multiplier-theorem} does not cause any issues here as a variant of the decay estimate~\eqref{E:decay-K0} holds for the derivatives of $\wh{K^0}$. 
\end{remark}

\begin{proof}[Proof of Proposition~\ref{P:smoothing-diagonal}]
Let $k_1$, $k_2$ be positive integers such that $\max\{k_1,k_2\}=j$. As there are only $2j-1$ such pairs of integers, it suffices to prove inequality~\eqref{E:diagonalL2} with $T_j^1$ replaced by the twisted paraproduct associated with the symbol
\[
\sum_{\ell \in \Z} \wh{K^0}(2^\ell(\xi,\eta)) \wh{\psi}(2^{\ell-k_1}\xi) \wh{\psi}(2^{\ell-k_2}\eta).
\]
Throughout the rest of the proof, we denote this operator simply by $T$.

We introduce the symbol
\[
m(\xi,\eta)=\wh{K^0}(\xi,\eta) \wh{\psi}(2^{-k_1}\xi) \wh{\psi}(2^{-k_2}\eta).
\]
We observe that if $\rho$ is a smooth function on $\R$ whose Fourier transform is supported in $[-4,-1/4] \cup [1/4,4]$ and is equal to $1$ on $[-2,-1/2] \cup [1/2,2]$ then 
\[
m(\xi,\eta)=m(\xi,\eta) \wh{\rho}(2^{-k_1}\xi) \wh{\rho}(2^{-k_2}\eta).
\]
Our operator can thus be rewritten as
\[
T(f,g)
=\sum_{\ell\in \Z} \widetilde{T}_{m(2^\ell \cdot)} (f,g)
=\sum_{\ell\in \Z} \widetilde{T}_{m(2^\ell \cdot)}(f \ast^{(1)} \rho_{2^{k_1-\ell}},g \ast^{(2)} \rho_{2^{k_2-\ell}}).
\]
We recall that the $L^2(\R^2) \times L^2(\R^2) \rightarrow L^1(\R^2)$ norm of the twisted paraproduct is invariant under rescaling of the symbol, and therefore
\[
\|\widetilde{T}_{m(2^l \cdot)}\|_{L^2(\R^2) \times L^2(\R^2) \rightarrow L^1(\R^2)}
=\|\widetilde{T}_{m}\|_{L^2(\R^2) \times L^2(\R^2) \rightarrow L^1(\R^2)}.
\]
By Lemma~\ref{P:smoothing}, there is $\delta>0$ such that
\[
\|\widetilde{T}_{m}\|_{L^2(\R^2) \times L^2(\R^2) \rightarrow L^1(\R^2)} \lesssim 2^{-j\delta} \|\Omega\|_{L^q(\S^1)}. 
\]
Thus,
\begin{align*}
\|T(f,g)\|_{L^1(\R^2)}
&\leq \sum_{\ell\in \Z} \|\widetilde{T}_{m(2^\ell\cdot)}(f \ast^{(1)} \rho_{2^{k_1-\ell}},g \ast^{(2)} \rho_{2^{k_2-\ell}})\|_{L^1(\R^2)}\\
&\lesssim 2^{-j\delta} \|\Omega\|_{L^q(\S^1)} \sum_{\ell\in \Z}\|f \ast^{(1)} \rho_{2^{k_1-\ell}}\|_{L^2(\R^2)} \|g \ast^{(2)} \rho_{2^{k_2-\ell}}\|_{L^2(\R^2)}\\
&\lesssim 2^{-j\delta} \|\Omega\|_{L^q(\S^1)} \left(\sum_{\ell\in \Z}\|f \ast^{(1)} \rho_{2^{k_1-\ell}}\|_{L^2(\R^2)}^2 \right)^{\frac{1}{2}} \left(\sum_{\ell\in \Z} \|g \ast^{(2)} \rho_{2^{k_2-\ell}}\|_{L^2(\R^2)}^2 \right)^{\frac{1}{2}}\\
&\lesssim 2^{-j\delta} \|\Omega\|_{L^q(\S^1)} \|f\|_{L^2(\R^2)} \|g\|_{L^2(\R^2)}.
\end{align*}
We note that the third inequality in the above chain follows by the Cauchy-Schwarz inequality, and the fourth one by Plancherel's identity and by the fact that the Fourier supports of the functions $f \ast^{(1)} \rho_{2^{k_1-\ell}}$ (or $g \ast^{(2)} \rho_{2^{k_2-\ell}}$, respectively) have bounded overlap in $\ell\in \Z$. 
\end{proof}

\section{Proofs of Propositions~\ref{pr:shifted} and~\ref{P:shifted-decay}: estimates for shifted operators}\label{S:shifted}

Let $\phi$ be a Schwartz function on $\R$ whose Fourier transform is compactly supported away from the origin. Throughout this section, for $u>0$ and $v\in \R$ we will use the notation
\[
\phi_u^v(x)=u\phi(ux-v), \quad x\in \R.
\]
It is well known (see, e.g., \cite[Theorem 5.1]{Muscalu}) that if $1<r<\infty$, then there is a constant $C>0$ depending on $r$ and $\phi$ such that
\begin{equation}\label{E:shifted-square-function}
\left\|\left(\sum_{k\in \Z} |f \ast \phi_{2^{k}}^v|^2\right)^{\frac{1}{2}}\right\|_{L^r(\R)} \leq C \log(2+|v|) \|f\|_{L^r(\R)}
\end{equation}
for any $v\in \R$ and any Schwartz function $f$ on $\R$. The estimate~\eqref{E:shifted-square-function} will play an important role in the proofs of Propositions~\ref{pr:shifted} and~\ref{P:shifted-decay} in Subsections~\ref{sub:1} and~\ref{sub:2} below.

\subsection{Proof of Proposition~\ref{pr:shifted}}\label{sub:1}

We start by establishing a variant of Proposition~\ref{pr:shifted} involving a power-type blow-up in $j$. The proof of such an estimate is standard and resembles the arguments from~\cite{Muscalu}, \cite[Section 2.1]{DR2021}, \cite[proof of Proposition 4]{DS2024}, \cite[proof of Lemma 5.3]{BSpreprint}; we include it below for the sake of completeness.

\begin{lemma}\label{L:shifted} Let $1<p_1,p_2<\infty$, $1\leq p <\infty$ satisfy $\frac1{p_1}+\frac{1}{p_2} = \frac1p$, and let $j\geq 1$ be an integer. Then 
	\begin{equation}\label{E:diagonalLp-blowup}
	\|T^1_j\|_{L^{p_1}(\R^2)\times L^{p_2}(\R^2)\to L^p(\R^2)}\lesssim j^3 \|\Omega\|_{L^1(\S^1)}.
        \end{equation}
	\end{lemma}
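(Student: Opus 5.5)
We would reduce to single pieces and represent each as a superposition of shifted products of one-dimensional Littlewood--Paley projections, in the spirit of \cite{Muscalu,DR2021}. As in the proof of Proposition~\ref{P:smoothing-diagonal}, $T_j^1$ splits into at most $2j-1$ operators $T$ associated with the symbols $\sum_{\ell}\wh{K^0}(2^\ell(\xi,\eta))\wh{\psi}(2^{\ell-k_1}\xi)\wh{\psi}(2^{\ell-k_2}\eta)$, where $\max\{k_1,k_2\}=j$ and $k_1,k_2\geq 1$. It therefore suffices to show $\|T\|_{L^{p_1}\times L^{p_2}\to L^p}\lesssim j^2\|\Omega\|_{L^1(\S^1)}$ for each such piece. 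This is the source of the factor $j^3$.

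Fix a piece and write $m_\ell(\xi,\eta)=\wh{K^0}(2^\ell(\xi,\eta))\wh{\psi}(2^{\ell-k_1}\xi)\wh{\psi}(2^{\ell-k_2}\eta)$. On the kernel side this is the kernel $K^0_{(\ell)}:=2^{-2\ell}K^0(2^{-\ell}\cdot)$, convolved in the first variable with $\psi_{2^{k_1-\ell}}$ and in the second with $\psi_{2^{k_2-\ell}}$. The operator then reads
\[
T(f,g)(x,y)=\sum_{\ell}\int_{\R^2} K^0_{(\ell)}(s,t)\,(f\ast^{(1)}\psi_{2^{k_1-\ell}})(x-s,y)\,(g\ast^{(2)}\psi_{2^{k_2-\ell}})(x,y-t)\,ds\,dt .
\]
Substituting $(s,t)=2^{\ell}(a,b)$ with $|(a,b)|\le 2$ gives
\[
T(f,g)=\int_{|(a,b)|\le 2}K^0(a,b)\sum_\ell (f\ast^{(1)}\psi_{2^{k_1-\ell}}(\cdot-2^\ell a))\,(g\ast^{(2)}\psi_{2^{k_2-\ell}}(\cdot-2^\ell b))\,da\,db .
\]
The translated convolution $f\ast^{(1)}\psi_{2^{k_1-\ell}}(\cdot-2^\ell a)$ equals $f\ast^{(1)}\psi^{v}_{2^{k_1-\ell}}$ with shift $v=2^{k_1}a$, and $|v|\lesssim 2^{j}$. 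Similarly the second factor is a shifted projection with shift $w=2^{k_2}b$, $|w|\lesssim 2^j$.

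For $1\le p$ we use Minkowski's inequality in $(a,b)$, which costs $\|K^0\|_{L^1}\lesssim\|\Omega\|_{L^1(\S^1)}$. For fixed $(a,b)$ we apply Cauchy--Schwarz in $\ell$, which bounds the summand pointwise by $S_1^{v}f\cdot S_2^{w}g$. Here $S_1^v f=(\sum_\ell|f\ast^{(1)}\psi^v_{2^{k_1-\ell}}|^2)^{1/2}$ is the shifted square function acting in the first variable, and $S_2^w g$ is its analogue in the second variable. Hölder's inequality and \eqref{E:shifted-square-function} applied fiberwise (Fubini on each line, $1<p_1,p_2<\infty$) then give the bound
\[
\log(2+|v|)\log(2+|w|)\|f\|_{L^{p_1}}\|g\|_{L^{p_2}}\lesssim j^2\|f\|_{L^{p_1}}\|g\|_{L^{p_2}} .
\]
Summing over the $\lesssim j$ pieces yields \eqref{E:diagonalLp-blowup}.

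The main point requiring care is the bookkeeping of the shifts. The shift in \eqref{E:shifted-square-function} must be uniform in $\ell$ at each scale, and here the shift $2^{k_1}a$ indeed depends only on $(a,b)$ and not on $\ell$, so the logarithmic loss is only $\log 2^{j}\approx j$ per factor. This relies on $k_1,k_2\ge1$ and on the support of $K^0$ in $|(a,b)|\le 2$. One should also check that $p\ge1$ is needed only for Minkowski's inequality, which is why the lemma is restricted to $p\geq 1$.
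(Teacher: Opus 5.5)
Your proposal is correct and is essentially the paper's proof. It uses the same reduction to the $2j-1$ pieces with $\max\{k_1,k_2\}=j$ and the same rewriting of each piece as a $K^0$-average of shifted twisted paraproducts with shifts $2^{k_1}s'$, $2^{k_2}t'$, followed by Cauchy--Schwarz in $\ell$, H\"older, the fiberwise shifted square function estimate \eqref{E:shifted-square-function}, and Minkowski's inequality (which is where $p\geq 1$ enters). One small correction to your closing remark: the bound $\log(2+2^{k_i}|a|)\lesssim j$ uses $k_i\leq j$, whereas $k_i\geq 1$ is what limits the number of pieces to $2j-1$.
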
 

\begin{proof}
Let $k_1$, $k_2$ be positive integers such that $\max\{k_1,k_2\}=j$. Similarly as in the proof of Proposition~\ref{P:smoothing-diagonal}, we will denote by $T$ the twisted paraproduct associated with the symbol
\begin{equation}\label{E:symbol-k12}
\sum_{\ell \in \Z} \wh{K^0}(2^\ell(\xi,\eta)) \wh{\psi}(2^{\ell-k_1}\xi) \wh{\psi}(2^{\ell-k_2}\eta).
\end{equation}
Inequality~\eqref{E:diagonalLp-blowup} will follow if we prove
\[
\|T\|_{L^{p_1}(\R^2)\times L^{p_2}(\R^2)\to L^p(\R^2)}\lesssim j^2 \|\Omega\|_{L^1(\S^1)}.
\]

The inverse Fourier transform of~\eqref{E:symbol-k12} becomes
	\begin{align*} 
		&\sum_{\ell\in \Z}  \left((K^0)_{2^{-\ell}}\ast (\psi_{2^{k_1-\ell}} \otimes \psi_{2^{k_2-\ell}}) \right)(s, t)
		=\sum_{\ell\in \Z} \int_{\R^2} K^0(s',t') \psi_{2^{k_1-\ell}}^{2^{k_1}s'}(s) \psi_{2^{k_2-\ell}}^{2^{k_2} t'}(t) ds'dt'.
	\end{align*}
We fix $(s',t') \in \R^2$ and denote by $T^{s',t'}$ the twisted paraproduct associated with the kernel
\[
\sum_{\ell\in \Z} \psi_{2^{k_1-\ell}}^{2^{k_1}s'}(s) \psi_{2^{k_2-\ell}}^{2^{k_2} t'}(t).
\]
Let $f$, $g$ be Schwartz functions on $\R^2$. By the Cauchy-Schwarz inequality, we have
\[
\left|T^{s',t'}(f,g)(x,y)\right| \leq  \left( \sum_{\ell\in \Z} \left| f\ast^{(1)} \psi_{2^{k_1-\ell}}^{2^{k_1}s'}(x,y) \right|^2 \right)^{1/2}  \left( \sum_{\ell\in \Z} \left| g\ast^{(2)} \psi_{2^{k_2-\ell}}^{2^{k_2} t'}(x,y)\right|^2\right)^{1/2}.
\]
An application of H\"older's inequality and of the estimate~\eqref{E:shifted-square-function} then yields
\begin{align*}
&\|T^{s',t'}(f,g)\|_{L^p(\R^2)} \\
&\leq \left\|\left( \sum_{\ell\in \Z} \left| f\ast^{(1)} \psi_{2^{k_1-\ell}}^{2^{k_1}s'}(x,y) \right|^2 \right)^{1/2}\right\|_{L^{p_1}(\R^2)}
\left\|\left( \sum_{\ell\in \Z} \left| g\ast^{(2)} \psi_{2^{k_2-\ell}}^{2^{k_2} t'}(x,y)\right|^2\right)^{1/2}\right\|_{L^{p_2}(\R^2)}\\
&\lesssim \log(2+2^{k_1}|s'|)\log(2+2^{k_2}|t'|) \|f\|_{L^{p_1}(\R^2)} \|g\|_{L^{p_2}(\R^2)}.
\end{align*}
By Minkowski's inequality, the $L^{p_1}(\R^2) \times L^{p_2}(\R^2) \rightarrow L^p(\R^2)$ norm of the operator $T$ is thus bounded by a multiple of
\[
j^2 \int_{\R^2} |K^0(s',t')| \log(2+|s'|)\log(2+|t'|)\,ds' dt'
\lesssim j^2 \|\Omega\|_{L^1(\S^1)}. 
\]
This completes the proof.
\end{proof}

Proposition~\ref{pr:shifted} now follows by estimating the $L^1$-norm of $\Omega$ by its $L^q$-norm for $q>1$ and then interpolating between the estimates of Lemma~\ref{L:shifted} and Proposition~\ref{P:smoothing-diagonal} via a bilinear version of the Marcinkiewicz interpolation theorem, see, e.g., \cite{GLLZ}.


\subsection{Proof of Proposition~\ref{P:shifted-decay}}\label{sub:2}
To prove Proposition~\ref{P:shifted-decay}, we will need two lemmas. The first one shows an exponential decay of a weighted $L^1$-norm of the kernel $K^{0,2}_j$ as $j\to \infty$.

\begin{lemma}\label{L:integral-estimate}
Let $1<q\leq \infty$, $0\leq L<\infty$ and let $j\geq 1$ be an integer.
Then there is a constant $\delta>0$ depending on $q$ and $L$ such that
\begin{equation}\label{E:lq}
\int_{\R^2} |K_j^{0,2}(s',t')| (1+|s'|)^L(1+|t'|)^L\,ds' dt' 
\lesssim 2^{-\delta j} \|\Omega\|_{L^q(\S^1)}.
\end{equation}
\end{lemma}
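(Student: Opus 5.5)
By symmetry it suffices to treat the piece $\wh{K^0}(\xi,\eta)\wh{\psi}(2^{-j}\xi)\wh{\varphi}(\eta)$. Its inverse Fourier transform is
\[
K^0\ast(\psi_{2^j}\otimes\varphi),
\]
where $\varphi$ is Schwartz with $\wh\varphi$ supported in $[-2,2]$. The plan is to prove two bounds for the weighted $L^1$ norm of this kernel and interpolate between them in $\Omega$.

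\emph{Bound without decay.} The first bound holds for any $\Omega\in L^1(\S^1)$ and carries no decay. Since $K^0$ is supported in $\{|(s,t)|\le 2\}$ and $\|K^0\|_{L^1}\lesssim\|\Omega\|_{L^1(\S^1)}$, I use $(1+|s'|)^L\lesssim(1+|s'-s|)^L(1+|s|)^L$ and its analogue in $t$. Because $K^0$ has compact support, the weights then fall on $\psi_{2^j}$ and $\varphi$, where they are harmless; for $\psi_{2^j}$ the weight $(1+|s|)^L$ costs nothing as $2^j\ge 1$. This gives the weighted norm $\lesssim\|\Omega\|_{L^1(\S^1)}$.

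\emph{Bound with decay, for $q=\infty$ or large $q$.} I exploit the vanishing moments of $\psi$ in the first variable, where the kernel lives at the fine scale $2^{-j}$. The kernel equals
\[
\int K^0(s-a,t-b)\,\psi_{2^j}(a)\varphi(b)\,da\,db.
\]
Since $\int\psi=0$, I subtract $K^0(s,t-b)$, which produces the first-order difference $K^0(s-a,t-b)-K^0(s,t-b)$. Its $L^1$ norm in $(s,t)$ is controlled by the $L^1$-modulus of continuity of $K^0$ in the first variable at increment $|a|\sim 2^{-j}$. For a homogeneous kernel with $\Omega\in L^q$, the translation modulus $\|K^0(\cdot-(a,0))-K^0\|_{L^1}$ is $\lesssim|a|^{\gamma}\|\Omega\|_{L^q}$ for some $\gamma=\gamma(q)>0$; this is the classical estimate for rough kernels, proved either via the $L^2$ Fourier decay \eqref{E:decay-K0} or directly via angular rotation estimates in the Calder\'on--Zygmund rough-kernel theory. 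The weights are again harmless because of compact support plus Schwartz tails. Integrating against $|\psi_{2^j}(a)|$, which concentrates at $|a|\lesssim 2^{-j}$ and whose tails are absorbed by $(1+2^j|a|)^{-N}$, yields $\lesssim 2^{-\gamma j}\|\Omega\|_{L^q}$.

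\emph{Main obstacle and interpolation.} The hard part is establishing the translation-continuity estimate cleanly for $L^q$ angular functions. I would first try $q=2$ (or any $q$ where it is easiest), via Cauchy--Schwarz on the compact support and Plancherel, using that $\wh{K^0}$ decays like $|(\xi,\eta)|^{-\delta}$. The one-variable difference in $s$ is controlled by the Fourier multiplier $|e^{2\pi i a\xi}-1|\lesssim\min(1,|a\xi|)$, which gives an $L^2$ bound of order $|a|^{\gamma}$ provided $\wh{K^0}$ has some decay in $L^2$-averaged form. Then I view the kernel as linear in $\Omega$ and interpolate, via Riesz--Thorin into the weighted $L^1$ space, between the $L^1(\S^1)$ bound without decay and the decaying $L^{q_0}$ bound. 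This gives decay $2^{-\delta j}$ for every $q>1$, with $\delta$ depending on $q$ and $L$.
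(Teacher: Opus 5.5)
Your weighted bound without decay is correct, and a bit more direct than the paper's annulus argument. The closing Riesz--Thorin step is also the same as in the paper. The decay bound, however, has a genuine gap.

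The estimate you call classical, $\|K^0(\cdot-(a,0))-K^0\|_{L^1(\R^2)}\lesssim |a|^{\gamma}\|\Omega\|_{L^q(\S^1)}$, is false for general $\Omega\in L^q(\S^1)$, even for $q=\infty$. A uniform $L^1$ modulus of continuity is exactly the Dini-type regularity that rough kernels lack. For a counterexample, take $\Omega(\cos\phi,\sin\phi)=e^{iN\phi}$ (or its real part) with $N\approx 2^j$ and $|a|\approx 2^{-j}$. Translating by $(a,0)$ changes the argument of a point $x$ with $1/2\le|x|\le 2$ by about $a\sin(\arg x)/|x|$. Hence the phase $N\arg x$ moves by an amount of order one on a set of measure $\approx 1$, and $\|K^0(\cdot-(a,0))-K^0\|_{L^1}\gtrsim 1$ while $\|\Omega\|_{L^\infty}=1$.

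Once you move absolute values inside the $a$- and $b$-integrals, the quantity $\int|\psi_{2^j}(a)|\,\|K^0(\cdot-(a,0))-K^0\|_{L^1}\,da$ is therefore of size one, and no decay can come out. Your Plancherel variant fails for the same reason. It requires $\int_{\R^2}\min(1,|a\xi|^2)\,|\wh{K^0}(\xi,\eta)|^2\,d\xi\,d\eta\lesssim|a|^{2\gamma}$, i.e.\ $L^2$-averaged decay of $\wh{K^0}$ over the whole plane, which is a smoothness condition on $\Omega$. The pointwise decay \eqref{E:decay-K0} with $\delta<1$ is not square integrable at infinity in $\R^2$. For the $\Omega$ above, $\wh{K^0}$ carries $L^2$ mass of order one near $|(\xi,\eta)|\approx N$.

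The missing idea is the localization $|\eta|\lesssim 1$ supplied by $\wh\varphi$, which your triangle inequality in $b$ throws away. The paper keeps it. The piece $\mathcal K$ with $\wh{\mathcal K}=\wh{K^0}\,(\wh\psi(2^{-j}\cdot)\otimes\wh\varphi)$ is supported in a set of area $\approx 2^j$ on which $|(\xi,\eta)|\approx 2^j$. So for $\Omega\in L^\infty(\S^1)$, where \eqref{E:decay-K0} holds with $\delta=3/4$, Plancherel gives $\|\mathcal K\|_{L^2}\lesssim 2^{-3j/4}2^{j/2}\|\Omega\|_{L^\infty}$. With only the pointwise bound this needs $\delta>1/2$, i.e.\ $q>2$, so starting from $q=2$ as you suggest would fail even with the localization.

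Since $\mathcal K$ is not compactly supported, the weights must then be handled as follows. Apply Cauchy--Schwarz on the annuli $B(0,2^{k+1})\setminus B(0,2^k)$, which costs $2^k$. Combine this through a geometric mean with the bound $2^{-8Lk}\|\Omega\|_{L^1}$ for the $L^1$ mass of $\mathcal K$ on these annuli, which comes from the Schwartz tails of $\psi$ and $\varphi$. This yields the endpoint $2^{-j/8}\|\Omega\|_{L^\infty}$ for the weighted norm, which is then interpolated with your $L^1$ bound.
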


\begin{proof}
Without loss of generality, we may assume that $L\geq 1$, since the estimate for $L\in [0,1)$ follows from the one for $L=1$. Additionally, 
by symmetry it suffices to prove inequality~\eqref{E:lq} with $K_j^{0,2}$ replaced by  the kernel $\mathcal K$ whose Fourier transform is given by
\[
\wh{\mathcal K}(\xi,\eta)=\wh{K^0}(\xi,\eta) \wh{\psi}(2^{-j}\xi)\wh{\varphi}(\eta).
\]

We estimate
\begin{align}\label{E:annuli}
&\int_{\R^2} |\mathcal K(s',t')| (1+|s'|)^L (1+|t'|)^{L}\,ds' dt' \\
\nonumber
&\lesssim \int_{B(0,2)}|\mathcal K(s',t')|\,ds'dt' + \sum_{k=1}^\infty 2^{2Lk} \int_{B(0,2^{k+1})\setminus B(0,2^k)} |\mathcal K(s',t')| \,ds' dt'.
\end{align}
Using H\"older's inequality and Plancherel's identity, we get
\begin{equation}\label{E:plancherel}
\int_{B(0,2)}|\mathcal K(s',t')|\,ds'dt'
\lesssim \|\mathcal K\|_{L^2(\R^2)}
=\|\wh{\mathcal K}\|_{L^2(\R^2)}.
\end{equation}
The decay estimate~\eqref{E:decay-K0} then yields
\begin{align}
\nonumber
&\|\wh{\mathcal K}\|_{L^2(\R^2)}
=\|\wh{K^0}(\xi,\eta) \wh{\psi}(2^{-j}\xi)\wh{\varphi}(\eta)\|_{L^2(\R^2)}\\
\label{E:unit-ball}
&\lesssim 2^{-\frac{3j}{4}} \|\Omega\|_{L^\infty(\S^1)} \|\wh{\psi}(2^{-j}\xi)\wh{\varphi}(\eta)\|_{L^2(\R^2)}
\lesssim 2^{-\frac{j}{4}} \|\Omega\|_{L^\infty(\S^1)}.
\end{align}
In addition, if $k\in \N$ then applying H\"older's inequality, Plancherel's identity and~\eqref{E:unit-ball}, we get
\begin{align}\label{E:blowup-k}
\int_{B(0,2^{k+1})\setminus B(0,2^k)}|\mathcal K(s',t')|\,ds'dt'
\lesssim 2^k\|\mathcal K\|_{L^2(\R^2)}
\lesssim 2^k 2^{-\frac{j}{4}} \|\Omega\|_{L^\infty(\S^1)}.
\end{align}

We next focus on proving estimates with $\Omega \in L^1(\S^1)$. We recall that
\[
\mathcal K=K^0 \ast [\psi_{2^j} \otimes\varphi].
\]
By Young's convolution inequality, we obtain
\begin{equation}\label{E:young}
\int_{B(0,2)}|\mathcal K(s',t')|\,ds'dt' 
\lesssim \|K^0\|_{L^1(\R^2)}\|\psi_{2^j} \otimes\varphi\|_{L^1(\R^2)} \lesssim \|\Omega\|_{L^1(\S^1)}.
\end{equation}
Next, we prove that
\begin{equation}\label{E:est-l1}
\int_{B(0,2^{k+1})\setminus B(0,2^k)}|\mathcal K(s',t')|\,ds'dt' 
\lesssim \|\Omega\|_{L^1(\S^1)} 2^{-8Lk}.
\end{equation}
Since $\psi$ and $\varphi$ are Schwartz functions, we get
\[
|\psi(x)| \lesssim (1+|x|)^{-10L} 
\]
and
\[
|\varphi(y)| \lesssim (1+|y|)^{-10L}. 
\]
Thus,
\begin{align*}
|\psi_{2^j} \otimes \varphi|(x,y) 
&\lesssim 2^j (1+2^j|x|)^{-10L} (1+|y|)^{-10L}\\
&\lesssim 2^j\sum_{l=1}^\infty 2^{-10Ll} \chi_{B(0,2^l)}(2^jx,y).
\end{align*}

We observe that the function $K^0 \ast (\chi_{B(0,2^l)}(2^j\cdot,\cdot))$ is supported in the ball $B(0,2^{l+1})$, and therefore $K^0 \ast (\chi_{B(0,2^l)}(2^j\cdot,\cdot))$ vanishes on $B(0,2^{k+1})\setminus B(0,2^k)$ if $l<k-1$. This implies
\begin{align*}
\int_{B(0,2^{k+1})\setminus B(0,2^k)}|\mathcal K(s',t')|\,ds'dt'
&\lesssim 2^j \sum_{l=k-1}^\infty 2^{-10Ll} \|K^0 \ast (\chi_{B(0,2^l)}(2^j\cdot,\cdot))\|_{L^1(\R^2)}\\
&\lesssim \|\Omega\|_{L^1(\S^1)} \sum_{l=k-1}^\infty 2^{-10Ll} 2^j \|\chi_{B(0,2^l)}(2^j\cdot,\cdot)\|_{L^1(\R^2)}\\
&\lesssim \|\Omega\|_{L^1(\S^1)} \sum_{l=k-1}^\infty 2^{-8Ll}
\lesssim \|\Omega\|_{L^1(\S^1)} 2^{-8Lk}. 
\end{align*}
This yields~\eqref{E:est-l1}. 

Combining inequalities~\eqref{E:annuli},~\eqref{E:young} and~\eqref{E:est-l1}, we obtain
\begin{equation}\label{E:l1}
\int_{\R^2} |\mathcal K(s',t')| (1+|s'|)^L (1+|t'|)^{L}\,ds' dt' \lesssim \|\Omega\|_{L^1(\S^1)}.
\end{equation}
Additionally, we take the geometric mean of the estimates~\eqref{E:blowup-k} and~\eqref{E:est-l1} to deduce that
\begin{equation}\label{E:140}
\int_{B(0,2^{k+1})\setminus B(0,2^k)}|\mathcal K(s',t')|\,ds'dt' 
\lesssim \|\Omega\|_{L^\infty(\S^1)} 2^{-\frac{j}{8}} 2^{-3Lk}.
\end{equation}
Inequalities~\eqref{E:plancherel}, \eqref{E:unit-ball} and~\eqref{E:140} then yield
\begin{equation}\label{E:linfinity}
\int_{\R^2} |\mathcal K(s',t')| (1+|s'|)^L (1+|t'|)^{L}\,ds' dt' \lesssim 2^{-\frac{j}{8}} \|\Omega\|_{L^\infty(\S^1)}.
\end{equation}
We point out that inequalities~\eqref{E:l1} and~\eqref{E:linfinity} hold for all functions $\Omega \in L^1(\S^1)$ or $\Omega \in L^\infty(\S^1)$, respectively; the mean zero condition on $\Omega$ was not required in our arguments. 
To finish the proof, we view $\mathcal K$ as a linear operator acting on the function $\Omega$ and interpolate between the estimates~\eqref{E:l1} and~\eqref{E:linfinity} via the Riesz-Thorin interpolation theorem. This yields~\eqref{E:lq}.
\end{proof}

The second lemma establishes logarithmic estimates for the shifted twisted paraproduct, as announced in~\eqref{E:logarithmic-blowup}. For our applications, it is sufficient to obtain estimates which are logarithmic in $u$; this is caused by the anisotropic scaling in~\eqref{E:off-diagonal-kernel} (i.e., we do not rescale the function $\wh{\varphi}$). 

\begin{lemma}\label{L:shifted-twisted-paraproduct}
Let $1<p_1,p_2<\infty$, $0<p<\infty$ satisfy $\frac{1}{p_1}+\frac{1}{p_2}=\frac{1}{p}$, and let $L$ be a positive integer. Assume that there is a constant $C>0$ such that inequality~\eqref{E:kovac} holds for all Calder\'on-Zygmund kernels $K$ of order $L$. Let $\rho$, $\phi$ be Schwartz functions on $\R$ whose Fourier transforms are supported in $[-4,4]$ and $[-4,-1/4] \cup [1/4,4]$, respectively. For $(u,v)\in \R^2$, we define the family $K^{u,v}$ of shifted kernels as in~\eqref{E:kuv}. Then
\begin{equation}\label{E:logarithmic-blowup-2}
\|T_{K^{u,v}}\|_{L^{p_1}(\R^2) \times L^{p_2}(\R^2) \rightarrow L^p(\R^2)} \lesssim \log(2+|u|) (1+|v|)^{L}.
\end{equation}
\end{lemma}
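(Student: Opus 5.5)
The plan is to separate the two shifts. The $v$-shift will be absorbed into the Calder\'on--Zygmund hypothesis at polynomial cost. The $u$-shift will be reduced to $O(\log(2+|u|))$ pieces, each of which is a genuine Calder\'on--Zygmund kernel of order $L$ with constant $O((1+|v|)^L)$. The hypothesis~\eqref{E:kovac} then applies to each piece, and the triangle inequality (or its $p$-th power version if $p<1$) finishes the argument.

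\emph{The $v$-shift.} On the Fourier side,
\[
\wh{K^{u,v}}(\xi,\eta)=\sum_{k\in\Z}\wh{\phi}(2^k\xi)e^{-2\pi i u2^k\xi}\,\wh{\rho}(2^k\eta)e^{-2\pi i v2^k\eta}.
\]
Consider first $u=0$. The summands have $\xi$-supports $|\xi|\approx 2^{-k}$, which overlap boundedly. A derivative $\partial_\eta$ falling on $e^{-2\pi i v 2^k\eta}$ costs a factor $2^k|v|$. A derivative $\partial_\xi$ costs $2^k$, and on the support this is $\approx|\xi|^{-1}\le|(\xi,\eta)|^{-1}$; here we use $|\eta|\lesssim 2^{-k}\approx|\xi|$, which holds because $\wh\rho$ is supported in $[-4,4]$. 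Hence $\wh{K^{0,v}}$ satisfies~\eqref{E:sigma} up to the constant $C(1+|v|)^L$. The hypothesis then gives $\|T_{K^{0,v}}\|\lesssim(1+|v|)^L$. The same computation shows more generally that any kernel $\sum_k 2^{-2k}\theta(2^{-k}s)\rho(2^{-k}t-v)$ is a Calder\'on--Zygmund kernel of order $L$ with constant $\lesssim(1+|v|)^L$. This only requires that $\wh\theta$ be supported in a fixed annulus and that $\|\wh\theta\|_{C^L}\lesssim 1$.

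\emph{The $u$-shift.} We may assume $|u|\ge 2$ and set $M=\lceil\log_2|u|\rceil$. Directly, the symbol $\wh\phi(\xi)e^{-2\pi iu\xi}$ has $C^L$ norm $\approx|u|^L$, which is too expensive. The plan is to decompose the shifted bump $\phi(\cdot-u)$ into $O(M)$ pieces $\theta^{(m)}$, $0\le m\le M$. Each piece should have Fourier support in a fixed annulus $\{|\xi|\approx1\}$, so that its symbol has $C^L$ norm $O(1)$ after a harmless normalization; pieces that are not of this form should instead be controlled by the shifted square function estimate~\eqref{E:shifted-square-function}. Concretely, I would first factor $\wh\phi=\wh\phi\,\wh\chi$ with $\wh\chi$ equal to $1$ on $\operatorname{supp}\wh\phi$. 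Then I would expand the unimodular factor $e^{-2\pi iu\xi}$ on the annulus $\frac14\le|\xi|\le4$ in a Littlewood--Paley fashion in the frequency variable, at resolutions $2^{-m}$ for $m\le M$. At each level $m$ the resulting piece should be a kernel whose symbol is smooth at the natural scale, and the contribution of level $m$ should be bounded by a constant independent of $m$; summing the $O(M)$ levels then produces the factor $\log(2+|u|)$. If this frequency splitting cannot be made compatible with the twisted structure, the fallback is to mirror Muscalu's proof of~\eqref{E:shifted-square-function}: run the argument with $u$ acting only on the $f$-side,
\[
T_{K^{u,v}}(f,g)=\sum_k (f\ast^{(1)}\phi^{u}_{2^{-k}})\,(g\ast^{(2)}\rho^{v}_{2^{-k}}),
\]
where the subscripts denote the $L^1$-normalized dilates at scale $2^{k}$ and the superscripts the shifts by $u$, resp.\ $v$, in units of $2^{k}$. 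Then use the Calder\'on--Zygmund bound in place of the square function bound at each of the $\approx\log|u|$ scales between $2^k$ and $2^k|u|$.

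The main obstacle is this $u$-decomposition. The twisted paraproduct only admits the hypothesis~\eqref{E:kovac} as a black box, so one cannot simply apply a square function to the $f$-side; the $g$-side factor $\rho$ has no cancellation, so there is no Cauchy--Schwarz splitting. The decomposition must therefore be carried out at the level of kernels, so that every piece stays in the class $\sum_k 2^{-2k}\theta(2^{-k}s)\rho(2^{-k}t-v)$ with $\theta$ uniformly controlled. The delicate point is to achieve this with only $O(\log|u|)$ pieces, rather than the $O(|u|)$ pieces of a naive interval partition of the frequency annulus.
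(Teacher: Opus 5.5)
Your treatment of the $v$-shift is correct and is what the paper uses. A kernel $\sum_k\widetilde\phi_{2^{-k}}(s)\rho^v_{2^{-k}}(t)$ with $\wh{\widetilde\phi}$ supported in a fixed annulus is a Calder\'on--Zygmund kernel of order $L$ after multiplication by $c(1+|v|)^{-L}$. (On the support $|\xi|^{-1}\approx|(\xi,\eta)|^{-1}$; your ``$\le$'' points the wrong way, but comparability is what you need.)

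The gap is the $u$-shift, and a cleverer splitting cannot repair your main plan. The hypothesis~\eqref{E:kovac} sees a kernel only through its constant in~\eqref{E:sigma}, and that constant is subadditive. Suppose $\phi(\cdot-u)=\sum_{m=0}^{M}\theta^{(m)}$ with $\|\wh{\theta^{(m)}}\|_{C^L}\lesssim 1$. Then $\|\wh\phi\,e^{-2\pi i u\,\cdot}\|_{C^L}\lesssim M\approx\log|u|$, whereas, as you noted yourself, this norm is $\approx|u|^L$. More generally, if $K^{u,v}=\sum_m K_m$ and each $K_m$ satisfies~\eqref{E:sigma} with constant $A_m$, then $K^{u,v}$ satisfies it with constant $\sum_m A_m$. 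Already $\partial_\xi$ falling on $e^{-2\pi i u2^k\xi}$ makes the constant of $K^{u,v}$ grow linearly in $|u|$. So any argument that decomposes the kernel and applies the black box piece by piece gives at best a bound of order $|u|$, never $\log(2+|u|)$. Your fallback, using ``the Calder\'on--Zygmund bound in place of the square function bound at each of the $\approx\log|u|$ scales'', is not a concrete step. The bound~\eqref{E:kovac} controls the whole multi-scale operator and cannot be invoked scale by scale.

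The missing idea is to stop decomposing the kernel and instead move the $u$-shift onto the input $f$, where~\eqref{E:shifted-square-function} can act. Split $k$ into residue classes modulo $10$ and consider $\mathcal K^{u,v}(s,t)=\sum_{k\in10\Z}\phi^u_{2^{-k}}(s)\rho^v_{2^{-k}}(t)$. Take $\widetilde\phi$ with $\wh{\widetilde\phi}$ supported in $\{1/8\le|\xi|\le8\}$ and equal to $1$ on $\{1/4\le|\xi|\le4\}$, and put $f^u=\sum_{l\in10\Z}f\ast^{(1)}\phi^u_{2^{-l}}$. Since the scales are $2^{10}$-separated, $f^u\ast^{(1)}\widetilde\phi_{2^{-k}}=f\ast^{(1)}\phi^u_{2^{-k}}$ for $k\in10\Z$. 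Hence
\[
T_{\mathcal K^{u,v}}(f,g)=T_{\mathcal K^{v}}(f^u,g),\qquad \mathcal K^{v}(s,t)=\sum_{k\in10\Z}\widetilde\phi_{2^{-k}}(s)\rho^v_{2^{-k}}(t),
\]
and the new kernel no longer depends on $u$. Your $v$-argument applied to $\mathcal K^v$ gives the factor $(1+|v|)^L$. The summands of $f^u$ have lacunary Fourier supports in the first variable, so Littlewood--Paley theory followed by~\eqref{E:shifted-square-function} in the first variable gives $\|f^u\|_{L^{p_1}(\R^2)}\lesssim\log(2+|u|)\|f\|_{L^{p_1}(\R^2)}$. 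The logarithm thus enters through the norm of a modified input function rather than through the constant of a kernel, which is how the subadditivity obstruction is avoided.
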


\begin{proof}
We split the sum over $k$ in~\eqref{E:kuv} into $10$ pieces modulo $10$. Estimates for all these pieces are analogous, so we will only consider the kernel
\[
\mathcal K^{u,v}(s,t)=\sum_{k\in 10\Z} \phi_{2^{-k}}^u(s) \rho_{2^{-k}}^v(t). 
\]

We fix $k\in 10\Z$, $l\in 10\Z$ and $(u,v)\in \R^2$. Let $\widetilde{\phi}$ be a Schwartz function on $\R$ whose Fourier transform is supported in $[-8,-1/8] \cup [1/8,8]$ and equals $1$ on $[-4,-1/4] \cup [1/4,4]$. Then
\begin{equation}\label{E:convolution}
\phi_{2^{-k}}^{u} \ast \widetilde{\phi}_{2^{-k}}=\phi_{2^{-k}}^{u} 
\end{equation}
and
\begin{equation}\label{E:convolution-2}
\phi_{2^{-l}}^{u} \ast \widetilde{\phi}_{2^{-k}}=0 \quad \text{if } l\neq k.
\end{equation}
Let $f$ be a Schwartz function on $\R^2$. We define its shifted variant $f^{u}$ by
\begin{equation}\label{E:definition-f-tilde}
f^u=\sum_{l\in 10\Z} f \ast^{(1)} \phi_{2^{-l}}^u.
\end{equation}
Using~\eqref{E:convolution} and~\eqref{E:convolution-2}, we get
\begin{equation}\label{E:kl}
f^{u} \ast^{(1)} \widetilde{\phi}_{2^{-k}}
=\sum_{l\in 10 \Z} f \ast^{(1)} (\phi_{2^{-l}}^{u} \ast \widetilde{\phi}_{2^{-k}})
=f\ast^{(1)} \phi_{2^{-k}}^{u}.
\end{equation}

Let $g$ be a Schwartz function on $\R^2$. We observe that
\begin{align}\label{E:move-shift}
T_{\mathcal K^{u,v}}(f,g)(x,y)
&=\sum_{k\in 10\Z} f \ast^{(1)} \phi_{2^{-k}}^u(x,y) g\ast^{(2)} \rho_{2^{-k}}^v(x,y)\\
\nonumber
&=\sum_{k\in 10\Z} f^{u} \ast^{(1)} \widetilde{\phi}_{2^{-k}}(x,y) g\ast^{(2)} \rho_{2^{-k}}^v(x,y).
\end{align}
Let $\mathcal{K}^v$ be the kernel defined by
\[
\mathcal K^v(s,t)=\sum_{k\in 10\Z} \widetilde{\phi}_{2^{-k}}(s)\rho_{2^{-k}}^v (t).
\]
Thanks to~\eqref{E:move-shift}, $T_{\mathcal K^{u,v}}(f,g)=T_{\mathcal K^v}(f^u,g)$. 

The Fourier transform of $\mathcal K^v$ satisfies
\[
\wh{\mathcal K^v}(\xi,\eta)=\sum_{k\in 10 \Z} \wh{\widetilde{\phi}}(2^k\xi) \wh{\rho}(2^k\eta) e^{-2\pi i 2^k v\eta}.
\]
Thus, there is a constant $c$ independent of $v$ such that $c(1+|v|)^{-L}\mathcal K^v$ is a Calder\'on-Zygmund kernel of order $L$. Consequently,
\[
\|T_{\mathcal K^{u,v}}(f,g)\|_{L^p(\R^2)}
=\|T_{\mathcal K^v}(f^u,g)\|_{L^p(\R^2)}
\lesssim (1+|v|)^{L}\|f^u\|_{L^{p_1}(\R^2)} \|g\|_{L^{p_2}(\R^2)}. 
\]
It remains to observe that 
\[
\|f^u\|_{L^{p_1}(\R^2)} \lesssim \log(2+|u|)\|f\|_{L^{p_1}(\R^2)}.
\]
Indeed, for a fixed $y\in \R$, the Fourier transform of the function $x \mapsto f \ast^{(1)} \phi_{2^{-k}}^u(x,y)$ is supported in the set $2^{-k-2} \leq |\xi|\leq 2^{-k+2}$. Applying~\cite[Lemma 7.5.2(b)]{MFA} in the first variable, we get
\[
\|f^u\|_{L^{p_1}(\R^2)} \lesssim \left\|\left(\sum_{l\in 10\Z} |f \ast^{(1)} \phi_{2^{-l}}^u|^2\right)^{\frac{1}{2}}\right\|_{L^{p_1}(\R^2)}.
\]
The proof is then concluded by using the estimate~\eqref{E:shifted-square-function} in the first variable.
\end{proof}

\begin{remark}
As pointed out in the introduction, estimates for a certain quadrilinear form with cubical structure associated with the family $K^{u,v}$ of shifted kernels~\eqref{E:kuv} were established in~\cite[Lemma 2.5 and Lemma 2.6]{DS2026}.
In these estimates, the authors were able to remove the dependence of the bound on $u$ by an application of the Cauchy-Schwarz inequality; such an argument is however global in nature and does not immediately yield any conclusion for the twisted paraproduct. A suitable localization of the techniques from~\cite{DS2026} would likely be applicable in our setting, but the localization procedure is again expected to produce a logarithmic blowup of the bounds in $u$. We do not pursue this approach here as our argument appears to be simpler. On the other hand, our approach of moving the shift from the kernel to one of the input functions only works for the twisted paraproduct and is not applicable in the more general setting of quadrilinear forms. 
\end{remark}

\begin{proof}[Proof of Proposition~\ref{P:shifted-decay}]
The symbol of the operator $T_j^2$ has the form 
\[
\sum_{\ell \in \Z} \wh{K_j^{0,2}}(2^{\ell}(\xi,\eta)).
\]
The function $\wh{K_j^{0,2}}(\xi,\eta)$ splits further into two parts as in~\eqref{E:off-diagonal-kernel}. By symmetry, we only consider the first part given by
\[
\wh{\mathcal K}(\xi,\eta):=\wh{K^0}(\xi,\eta) \wh{\psi}(2^{-j}\xi)\wh{\varphi}(\eta).
\]
Let $\phi$ be a Schwartz function on $\R$ whose Fourier transform is supported in $[-4,-1/4] \cup [1/4,4]$ and is equal to $1$ on $[-2,-1/2] \cup [1/2,2]$. Additionally, let $\rho$ be a Schwartz on $\R$ whose Fourier transform is supported in $[-4,4]$ and equal to $1$ on $[-2,2]$. Then $\wh{\phi}=1$ on the support of $\wh{\psi}$ and $\wh{\rho}=1$ on the support of $\wh{\varphi}$. Therefore, our symbol can be written as
\begin{equation}\label{E:symbol-1}
\sum_{\ell\in \Z} \wh{\mathcal K}(2^\ell(\xi,\eta)) \wh{\phi}(2^{\ell-j}\xi) \wh{\rho}(2^{\ell}\eta)
=\sum_{k\in \Z} \wh{\mathcal K}(2^{k+j}(\xi,\eta)) \wh{\phi}(2^{k}\xi) \wh{\rho}(2^{k+j}\eta).
\end{equation}
As the $L^{p_1}(\R^2) \times L^{p_2}(\R^2) \rightarrow L^p(\R^2)$ norm of the twisted paraproduct is invariant under anisotropic rescaling of the symbol, we may work with the symbol
\begin{equation}\label{E:rescaled-symbol}
\sum_{k\in \Z} \wh{\mathcal K}(2^{k+j}\xi,2^k\eta) \wh{\phi}(2^{k}\xi) \wh{\rho}(2^{k}\eta)
\end{equation}
in place of~\eqref{E:symbol-1}.
The inverse Fourier transform of~\eqref{E:rescaled-symbol} becomes
	\begin{equation}\label{E:kernel}
		\sum_{k\in \Z} \int_{\R^2} \mathcal K(s',t') \phi_{2^{-k}}^{2^{j}s'}(s) \rho_{2^{-k}}^{t'}(t) ds'dt'. 
	\end{equation}

We fix $(s',t')$. Thanks to~\cite{Kovac}, there are positive constants $C$ and $L$ such that inequality~\eqref{E:kovac} holds for all Calder\'on-Zygmund kernels of order $L$.
Therefore, Lemma~\ref{L:shifted-twisted-paraproduct} yields that the operator $T_{K^{2^js',t'}}$ associated with the kernel
\[
K^{2^js',t'}(s,t)=\sum_{k\in \Z} \phi_{2^{-k}}^{2^{j}s'}(s) \rho_{2^{-k}}^{t'}(t)
\]
satisfies
\[
\|T_{K^{2^js',t'}}\|_{L^{p_1}(\R^2) \times L^{p_2}(\R^2) \rightarrow L^p(\R^2)} \lesssim \log(2+2^j|s'|) (1+|t'|)^{L}
\lesssim j (1+|s'|)^L(1+|t'|)^{L}.
\]
By Minkowski's inequality, the $L^{p_1}(\R^2) \times L^{p_2}(\R^2) \rightarrow L^p(\R^2)$ norm of the twisted paraproduct associated with the kernel~\eqref{E:kernel} is bounded by a multiple of
\[
j \int_{\R^2} |\mathcal K(s',t')| (1+|s'|)^L (1+|t'|)^L\,ds' dt'.
\]
The proof is now concluded by applying Lemma~\ref{L:integral-estimate}. 
\end{proof}

\section{Proof of Proposition~\ref{P:interpolation-quasi-banach}: fiberwise Calder\'on-Zygmund decomposition}\label{S:CZ}

Having Corollary~\ref{P:interpolation-banach} at our disposal, we proceed by performing a fiberwise Calder\'on-Zygmund decomposition to extend our estimates to the quasi-Banach setting. We follow the arguments in~\cite[proof of Lemma 5.4]{BSpreprint},~\cite[proof of Proposition 5.1]{HP}, applying them fiberwise as in Bernicot~\cite{Bernicot}. The proof below is thus rather standard, we nevertheless include it for the sake of completeness.

We recall that if $1<r<\infty$, then the Lorentz space $L^{r,\infty}(\R^2)$ consists of all measurable functions $f$ on $\R^2$ for which
\[
\|f\|_{L^{r,\infty}(\R^2)} =\sup_{t>0} t |\{x\in \R^2:~|f(x)|>t\}|^{\frac{1}{r}}. 
\]

\begin{lemma}\label{pr:fiberCZ}
Let $1<p_0<\infty$ and $p_0'<q\leq \infty$. Let $j\geq 1$ be an integer. Then 
	\begin{align}
		\label{E:off-diagonal-p<1}	\|T_j\|_{L^{1}(\R^2)\times L^{p_0}(\R^2)\to L^{\frac{p_0}{p_0+1},\infty}(\R^2)}\lesssim j \|\Omega\|_{L^q(\S^1)},\\
		\label{E:off-diagonal-p<1,2}	\|T_j\|_{L^{p_0}(\R^2)\times L^{1}(\R^2)\to L^{\frac{p_0}{p_0+1},\infty}(\R^2)}\lesssim j \|\Omega\|_{L^q(\S^1)}.	
	\end{align}
\end{lemma}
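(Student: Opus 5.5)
By symmetry it suffices to prove \eqref{E:off-diagonal-p<1}. The plan is a fiberwise Calder\'on--Zygmund decomposition in the first variable, in the spirit of Bernicot~\cite{Bernicot}, combined with the He--Park type $L^q$-kernel estimate~\cite{HP,BSpreprint}.

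Fix $\lambda>0$, $f\in L^1(\R^2)$ and $g\in L^{p_0}(\R^2)$, normalized so that $\|f\|_{L^1}=\|g\|_{L^{p_0}}=1$. Set $r=\frac{p_0}{p_0+1}$. The goal is
\[
|\{|T_j(f,g)|>\lambda\}|\lesssim (j\|\Omega\|_{L^q}/\lambda)^{r}.
\]
Choose a height $\gamma$, to be calibrated at the end; the natural choice is $\gamma=\lambda^{r}$, up to the factor $j\|\Omega\|_{L^q}$.

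\textbf{Step 1: the decomposition and the exceptional set.} For almost every fixed $y$, perform the one-dimensional Calder\'on--Zygmund decomposition of $x\mapsto f(x,y)$ at height $\gamma$. This gives $f=f_1+\sum_Q b_Q$ with
\begin{itemize}
\item $|f_1|\lesssim\gamma$ and $\|f_1\|_{L^1}\le 1$;
\item each $b_Q$ supported on $I\times\{y\}$, $I$ a dyadic interval of the fiber, with mean zero in $x$ and $\|b_Q\|_{L^1(dx)}\lesssim \gamma|I|$;
\item $\sum|I|\le \gamma^{-1}\|f(\cdot,y)\|_{L^1}$.
\end{itemize}
The union $E=\bigcup 3I\times\{y\}$ is measurable with $|E|\lesssim\gamma^{-1}$. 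Also remove the set where $|g|>\gamma'$, or better, the set where $M^{(2)}(|g|^{p_0})^{1/p_0}$ is large, with a second height $\gamma'$ chosen so that its measure is also $\lesssim \lambda^{-r}$.

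\textbf{Step 2: the good part.} Since $f_1\in L^{s}$ for every $s\ge 1$ with $\|f_1\|_{L^s}^s\lesssim\gamma^{s-1}$, apply Corollary~\ref{P:interpolation-banach} for the $L^{s}\times L^{p_0}\to L^{t}$ bound with $1<t<2$. That corollary even gives decay, which dominates the factor $j$. Chebyshev's inequality together with the choice of $\gamma$ then handles $T_j(f_1,g)$. If $p_0$ is large enough that $t<2$ cannot be met this way, truncate $g$ at level $\gamma'$ first and use its $L^{s'}$ norm instead.

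\textbf{Step 3: the bad part off $E$.} This is the main obstacle. We need
\[
\int_{E^c}\Big|T_j\Big(\sum_Q b_Q,g\Big)\Big|\lesssim j\|\Omega\|_{L^q}\,\|f\|_{L^1}\,\sup\text{(control of }g),
\]
followed by Chebyshev. Write $T_j=\sum_\ell T_{j,\ell}$, where $T_{j,\ell}$ has kernel $2^{-2\ell}K_j^0(2^{-\ell}\cdot)$. Fix a cube $Q$ with fiber interval of length $2^{m}$. Split the scales into two ranges.
\begin{itemize}
\item For $\ell\le m$ (small scales), use that $K^0_j$ is essentially supported in $|(s,t)|\lesssim 1$ up to Schwartz tails. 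The contribution outside $3I$ is then controlled by these tails together with $\|K_j^0\|_{L^1}\lesssim\|\Omega\|_{L^1}$.
\item For $\ell>m$ (large scales), use the mean-zero property of $b_Q$ in $x$ together with the smoothness of $K_j^0$ in the first variable, where $\|\partial_s K_j^0\|\lesssim 2^{j}$. This gives a factor $\min(1,2^{j+m-\ell})$.
\end{itemize}
Summing over $\ell$ produces about $j$ terms of size $O(1)$, plus a geometric tail. This is the source of the loss $j$.

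\textbf{Step 4: handling $g$ and the $L^q$ assumption.} After the previous step, each term is bounded by
\[
\int\!\!\int |b_Q(x-s,y)|\,|g(x,y-t)|\,|K_{j,\ell}(s,t)|.
\]
Apply H\"older in $t$ to the $L^q$-integrable angular part of $K^0$. Passing through polar coordinates, this yields a maximal function $M^{(2)}(|g|^{q'})^{1/q'}$ in the second variable. This maximal function is bounded on $L^{p_0}$ precisely because $q'<p_0$, which is exactly the hypothesis $q>p_0'$. One then uses the second exceptional set to bound it by $\gamma'$ on $E^c$, or integrates it against the $b_Q$ mass. Chebyshev with $\|b\|_{L^1}\lesssim 1$ closes the argument.

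Finally, optimize $\gamma$ and $\gamma'$ to obtain the weak-type bound $L^{r,\infty}$. The delicate point I expect is making the derivative bound on $K^0_j$ (which costs $2^j$) compatible with only $L^q$ control of $\Omega$. For this I would mimic~\cite[Prop.~5.1]{HP}, first verifying the $\ell$-sum bound with $\Omega\in L^\infty$ and then handling the $L^q$ case via the maximal function above.
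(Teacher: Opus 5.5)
Your proposal is correct and follows essentially the paper's route. The shared steps are: a fiberwise Calder\'on--Zygmund decomposition in $x$; the good part via Corollary~\ref{P:interpolation-banach} and Chebyshev; and the bad part off the dilated fiber intervals, handled by H\"older in $t$ against the $L^q$-integrable kernel, a level-set split for an $\mathcal M^{(2)}_{q'}$-type maximal function of $g$ (weak type $(p_0,p_0)$ since $q'<p_0$), and an $L^1$ count of roughly $j$ scales of size $O(1)$ plus geometric tails. In the good part, an admissible $s$ with $1<t<2$ exists for every $p_0$ (e.g.\ the paper's $s=2p_0'$), so truncating $g$ is never needed.

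The only difference is in execution. The paper moves the frequency cutoffs onto the inputs, writing the bad term with $a\ast^{(1)}\psi_{2^{j-\ell}}$ and $g\ast^{(2)}\varphi_{2^{j-\ell}}$; the kernel then remains the compactly supported $K^0$, and the cancellation estimate is quoted from \cite[Lemma 5.7]{BSpreprint}. In your kernel-side version, the ``delicate point'' you flag is settled by writing $K_j^0=K^0\ast\Psi_j$ with $\Psi_j$ an $L^1$-normalized Schwartz function at scale $2^{-j}$. Minkowski's inequality then gives $\|\partial_s K_j^0\|_{L^1_sL^q_t}\lesssim 2^j\|\Omega\|_{L^q(\S^1)}$, with polynomial weights in $t$ absorbing the Schwartz tails in the H\"older step.
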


\begin{proof} 
By symmetry, it is enough to prove inequality~\eqref{E:off-diagonal-p<1}. Let $\mathcal{D}$ be the set of those functions $f\in L^1(\R^2)$ which can be written in the form 
\begin{equation}\label{E:form-of-f}
f(x,y) = \sum_j f^1_j(x)\chi_{E_j}(y),
\end{equation}
where $(f^1_j)_j$ is a finite sequence of functions from $L^1(\R)$ and $(E_j)_j$ is a finite sequence of pairwise disjoint measurable subsets of $\R$. We fix two functions $f\in\mathcal{D}$, $g\in L^{p_0}(\R^2)$ and normalize $\|f\|_{L^1(\R^2)} = \|g\|_{L^{p_0}(\R^2)} = \|\Omega\|_{L^q(\S^1)} = 1$. It is enough to show 
	\[ \big|\big\{(x,y)\in \R^2 : |T_j(f,g)(x,y)|>2\lambda \big\}\big| \lesssim j \lambda^{-\frac{p_0}{p_0+1}}, \quad \lambda >0,
	\]
	since $\mathcal D$ is dense in $L^1(\R^2)$. We fix $\lambda>0$ and perform a Calder\'on-Zygmund decomposition of $f(\cdot,y)$ at height $\lambda^{\frac{p_0}{p_0+1}}$ for each $y\in \R$. This yields a bounded function $b_y$ and mean zero functions $(a_{i,y})_i$ supported in disjoint dyadic intervals $Q_{i,y}$ such that 
	\begin{itemize}
		\item for all $(x,y)\in \R^2$, $f(x,y) = b_y(x) + \sum_i a_{i,y}(x)$,\\
		\item for all $y\in \R$, $\|b_y\|_{L^1(\R)} \lesssim \|f(\cdot,y)\|_{L^1(\R)}$ and $\|b_y\|_{L^\infty(\R)} \lesssim \lambda^{\frac{p_0}{p_0+1}}$,\\
		\item for all $i$ and all $y\in \R$, $\int_{Q_{i,y}} a_{i,y} = 0$ and $\|a_{i,y}\|_{L^1(\R)} \lesssim \lambda^{\frac{p_0}{p_0+1}} |Q_{i,y}|$,\\
		\item for all $i$ and all $y\in \R$, $\big|\bigcup_i Q_{i,y}\big| = \sum_i |Q_{i,y}| \lesssim \lambda^{-\frac{p_0}{p_0+1}}\|f(\cdot,y)\|_{L^1(\R)}$.
	\end{itemize}

	We set $b(x,y) = b_y(x)$ and observe that, thanks to~\eqref{E:form-of-f}, $b$ is measurable. Since $\|b\|_{L^1(\R^2)} \lesssim 1$ and $\|b\|_{L^\infty(\R^2)} \lesssim \lambda^{\frac{p_0}{p_0+1}}$, we deduce that
	\[\|b\|_{L^r(\R^2)} \lesssim \lambda^{\frac1{r'}\frac{p_0}{p_0+1}}, \quad 1\leq r\leq \infty.\]
To establish the weak-type bound for $b$, we use the fact that the operator $T_j$ is bounded from $L^{2p_0'}(\R^2) \times L^{p_0}(\R^2)$ into $L^{\frac{2p_0}{p_0+1}}(\R^2)$ with norm bounded by a constant independent of $j$. This follows from Corollary~\ref{P:interpolation-banach} paired with the inequalities $1<\frac{2p_0}{p_0 +1 } <2$ and $1< 2p_0' , p_0 <\infty $. Therefore, 
	\begin{align*} \big|\big\{(x,y)\in \R^2 : |T_j(b,g)(x,y)|>\lambda \big\}\big| &\lesssim \lambda^{-\frac{2p_0}{p_0 +1 }} \big\|T_j(b,g)\big\|_{L^{\frac{2p_0}{p_0 +1 }}(\R^2)}^\frac{2p_0}{p_0 +1 }	\\
	&\lesssim \lambda^{-\frac{2p_0}{p_0 +1 }}\|b\|_{L^{2p_0 '}(\R^2)}^\frac{2p_0}{p_0 +1 }\\
	&=\lambda^{-\frac{p_0}{p_0 +1 }}.
	\end{align*}

We next focus on proving the weak-type bound for $a$, where $a(x,y)=\sum_{i} a_{i,y}(x)$. For any $i$ and $y\in \R$, we set $Q^*_{i,y}$ to be the interval with the same center as $Q_{i,y}$ and four-times the length. We also denote $\mathcal{A}^*=\big\{(x,y)\in \R^2 : x\in \bigcup_i Q^*_{i,y}\}$.
%
	%
	%
Since 
	\begin{align*}\big|\mathcal{A}^*\big|
		\lesssim \lambda^{-\frac{p_0}{p_0 +1 }} \int_{\R} \|f(\cdot,y)\|_{L^1(\R)} dy
		=\lambda^{-\frac{p_0}{p_0 +1 }},
	\end{align*}
	it is enough to prove 
	\[ \Big|\Big\{(x,y)\notin\mathcal{A}^* : |T_j(a,g)(x,y)|>\lambda \Big\}\Big| \lesssim j \lambda^{-\frac{p_0}{p_0+1}}.
	\]

Using~\eqref{E:kj} and~\eqref{E:varphi}, we see that
\[
\wh{K_j}(\xi,\eta)
=\sum_{\ell \in \Z} \wh{K^0}(2^\ell(\xi,\eta)) \wh{\psi}(2^{\ell-j}\xi) \wh{\varphi}(2^{\ell-j}\eta)
+\sum_{\ell \in \Z} \wh{K^0}(2^\ell(\xi,\eta)) \wh{\varphi}(2^{\ell-j+1}\xi) \wh{\psi}(2^{\ell-j}\eta).
\]
Since both terms on the right-hand side are symmetric, we will only focus on the first one. The associated operator is then given by
\begin{equation}\label{E:bad-part}
\widetilde{T}_j(a,g)(x,y)
=\sum_{\ell \in \Z} 2^{-2\ell} \int_{\R^2} K^0(2^{-\ell}s,2^{-\ell}t) a \ast^{(1)} \psi_{2^{j-\ell}}(x-s,y) g \ast^{(2)} \varphi_{2^{j-\ell}}(x,y-t)\,ds dt.
\end{equation}
We split
\[
a(x,y)=\sum_{k\in \Z} a^k(x,y),
\]
where $a^k(x,y)$ is the sum of those functions $a_{i,y}(x)$ for which the length of the interval $Q_{i,y}$ is $2^{-k}$. Then~\eqref{E:bad-part} becomes
\begin{equation}\label{E:operator}
\sum_{\ell\in \Z} \sum_{k\in \Z} \int_{\R^2} K^0(s,t) a^k \ast^{(1)} \psi_{2^{j-\ell}}(x-2^{\ell}s,y) g \ast^{(2)} \varphi_{2^{j-\ell}}(x,y-2^{\ell}t)\,ds dt.
\end{equation}
For $k\in \Z$, we set
\[
\psi_{2^{j-\ell},k}=
\begin{cases}
\psi_{2^{j-\ell}}, &k \geq -\ell,\\
\chi_{(-\infty,-\cdot2^{-k-2})\cup(2^{-k-2},\infty)}\psi_{2^{j-\ell}}, &k < -\ell
\end{cases}
\]
and observe that for $(x,y) \notin \mathcal A^*$ and $|s|\leq 2$, we have $a^k \ast^{(1)} \psi_{2^{j-\ell}}(x-2^{\ell}s,y)=a^k \ast^{(1)} \psi_{2^{j-\ell},k}(x-2^{\ell}s,y)$. We may thus estimate the absolute value of~\eqref{E:operator} by a multiple of
\[
\int_{\R^2} |K^0(s,t)| \sum_{\ell \in \Z} \sum_{k\in \Z} |a^k \ast^{(1)} \psi_{2^{j-\ell},k}(x-2^{\ell}s,y)| \mathcal M^{(2)} g(x,y-2^\ell t)\,ds dt,
\]
where $\mathcal M^{(2)}$ denotes the Hardy-Littlewood maximal operator in the second variable. Using the support properties of $K^0$ and H\"older's inequality, this is bounded further by
\begin{align*}
&\int_{-2}^2 \sum_{\ell \in \Z} \sum_{k\in \Z} |a^k \ast^{(1)} \psi_{2^{j-\ell},k}(x-2^{\ell}s,y)| \int_{-2}^2 |K^0(s,t)| \mathcal M^{(2)} g(x,y-2^\ell t)\,dt ds\\
&\leq \int_{-2}^2 \sum_{\ell \in \Z} \sum_{k\in \Z} |a^k \ast^{(1)} \psi_{2^{j-\ell},k}(x-2^{\ell}s,y)| \left(\int_{-2}^2 |K^0(s,t)|^q \,dt\right)^{\frac{1}{q}}\\
&\quad \quad \quad \quad \times \left(\int_{-2}^2 |\mathcal M^{(2)} g(x,y-2^\ell t)|^{q'}\,dt\right)^{\frac{1}{q'}}\,ds\\
&\lesssim \mathcal M_{q'}^{(2)} \mathcal M^{(2)} g(x,y) \int_{-2}^2 \sum_{\ell \in \Z} \sum_{k\in \Z} |a^k \ast^{(1)} \psi_{2^{j-\ell},k}(x-2^{\ell}s,y)| \left(\int_{-2}^2 |K^0(s,t)|^q \,dt\right)^{\frac{1}{q}}\,ds,
\end{align*}
where $\mathcal M^{(2)}_{q'} h=(\mathcal M^{(2)} |h|^{q'})^{\frac{1}{q'}}$.
Thus,
\begin{align}\label{E:two-terms}
&|\{(x,y)\notin \mathcal A^*:~|T_j(a,g)(x,y)|>\lambda\}|\\
\nonumber
&\lesssim |\{(x,y)\in \mathbb R^2: \mathcal M_{q'}^{(2)} \mathcal M^{(2)} g(x,y)>\lambda^{\frac{1}{1+p_0}}\}|+|\{(x,y)\notin \mathcal A^*:\\
\nonumber
&\quad \quad \quad \quad\int_{-2}^2 \sum_{\ell \in \Z} \sum_{k\in \Z} |a^k \ast^{(1)} \psi_{2^{j-\ell},k}(x-2^{\ell}s,y)| \left(\int_{-2}^2 |K^0(s,t)|^q \,dt\right)^{\frac{1}{q}}\,ds > \lambda^{\frac{p_0}{1+p_0}}\}|.
\end{align}
Since $q>p_0'$, the operator $\mathcal M_{q'}^{(2)} \mathcal M^{(2)}$ is bounded on $L^{p_0}(\R^2)$. In particular, it is of weak type $(p_0,p_0)$, which implies that the first term on the right-hand side of~\eqref{E:two-terms} is bounded by a multiple of $\lambda^{-\frac{p_0}{1+p_0}}$. 

We dominate the second term on the right-hand side of~\eqref{E:two-terms} by
\begin{align}\label{E:chebyshev}
\nonumber
&\lambda^{-\frac{p_0}{1+p_0}} \int_{\R^2} \int_{-2}^2 \sum_{\ell \in \Z} \sum_{k\in \Z} |a^k \ast^{(1)} \psi_{2^{j-\ell},k}(x-2^{\ell}s,y)| \left(\int_{-2}^2 |K^0(s,t)|^q \,dt\right)^{\frac{1}{q}}\,ds dx dy\\
\nonumber
&=\lambda^{-\frac{p_0}{1+p_0}}\int_{-2}^2 \left(\int_{-2}^2 |K^0(s,t)|^q \,dt\right)^{\frac{1}{q}}\,ds \sum_{\ell \in \Z} \sum_{k\in \Z} \|a^k \ast^{(1)} \psi_{2^{j-\ell},k}\|_{L^1(\R^2)}\\
&\lesssim\lambda^{-\frac{p_0}{1+p_0}} \|\Omega\|_{L^q(\S^1)} \sum_{k \in \Z} \sum_{\ell\in \Z} \|a^k \ast^{(1)} \psi_{2^{j-\ell},k}\|_{L^1(\R^2)}. 
\end{align}
Applying~\cite[Lemma 5.7]{BSpreprint} in the first variable, we obtain
\[
\|a^k \ast^{(1)} \psi_{2^{j-\ell},k}\|_{L^1(\R^2)} \lesssim \min\{1,2^{j-\ell-k}\} \|a^k\|_{L^1(\R^2)} \quad \text{if } k \geq -\ell
\]
and
\[
\|a^k \ast^{(1)} \psi_{2^{j-\ell},k}\|_{L^1(\R^2)} \lesssim 2^{-(j-\ell-k)} \|a^k\|_{L^1(\R^2)} \quad \text{if } k < -\ell.
\]
Using these estimates and summing in $\ell$, we bound~\eqref{E:chebyshev} by
\[
j \lambda^{-\frac{p_0}{1+p_0}}\|\Omega\|_{L^q(\S^1)} \sum_{k \in \Z} \|a^k\|_{L^1(\R^2)}
\lesssim j \lambda^{-\frac{p_0}{1+p_0}},
\]
as desired.
\end{proof}

\begin{proof}[Proof of Proposition~\ref{P:interpolation-quasi-banach}]
By choosing $\varepsilon\in (0,1/4)$ to be sufficiently small, we ensure that 
\[
0<\frac{1}{p_1}-\frac{1+\varepsilon}{2p}+\frac{1}{2}<1, \quad 0<\frac{1}{p_2}-\frac{1+\varepsilon}{2p}+\frac{1}{2}<1.
\]
We then set
\[
\frac{1}{\widetilde{p}_1}:=\frac{1}{p_1}-\frac{1+\varepsilon}{2p}+\frac{1}{2}, 
\quad \frac{1}{\widetilde{p}_2}:=\frac{1}{p_2}-\frac{1+\varepsilon}{2p}+\frac{1}{2},
\quad \frac{1}{\widetilde{p}}:=\frac{1}{\widetilde{p}_1}+\frac{1}{\widetilde{p}_2}
\]
and observe that $1<\widetilde{p}<2$. Corollary~\ref{P:interpolation-banach} thus yields that there is $\delta>0$ such that
\[
\|T_j\|_{L^{\widetilde{p}_1}(\R^2)\times L^{\widetilde{p}_2}(\R^2) \rightarrow L^{\widetilde{p}}(\R^2)} \lesssim 2^{-j\delta} \|\Omega\|_{L^q(\S^1)}. 
\]
Additionally, since $1/p+1/q<2$, for a sufficiently small $\varepsilon>0$ we have 
\[
1<\frac{p}{1-p+\varepsilon}<\infty, \quad q>\left(\frac{p}{1-p+\varepsilon}\right)^{'}.
\]
It then follows from Lemma~\ref{pr:fiberCZ} that
\[
\|T_j\|_{L^1(\R^2) \times L^{\frac{p}{1-p+\varepsilon}}(\R^2) \rightarrow L^{\frac{p}{1+\varepsilon},\infty}(\R^2)} \lesssim j \|\Omega\|_{L^q(\S^1)},
\]
\[
\|T_j\|_{L^{\frac{p}{1-p+\varepsilon}}(\R^2)\times L^1(\R^2) \rightarrow L^{\frac{p}{1+\varepsilon},\infty}(\R^2)} \lesssim j \|\Omega\|_{L^q(\S^1)}.
\]
Finally, by lowering our $\varepsilon>0$ further, if necessary, we ensure that the point $(1/p_1,1/p_2)$ lies inside the triangle with vertices
\[
\left(\frac{1}{\widetilde{p}_1}, \frac{1}{\widetilde{p}_2}\right), \quad
\left(1,\frac{1+\varepsilon-p}{p}\right), \quad \left(\frac{1+\varepsilon-p}{p},1\right). 
\]
Applying the bilinear version of the Marcinkiewicz interpolation theorem~\cite{GLLZ}, we obtain~\eqref{E:decay-quasibanach}. 
\end{proof}

\section{Proof of Proposition~\ref{P:counterexample}: a counterexample}\label{S:counterexample}

In this section, we construct an odd function $\Omega$ on $\S^1$ confirming the validity of Proposition~\ref{P:counterexample}.

\begin{proof}[Proof of Proposition~\ref{P:counterexample}]
Let $\Omega$ be the odd function on $\S^1$ given by
\begin{equation}\label{E:omega}
\Omega(\theta_1,\theta_2)=|\theta_1-\theta_2|^{-\frac{1}{q}} \left(\log \frac{1}{|\theta_1-\theta_2|}\right)^{-\frac{\alpha}{q}} \chi_{(-\frac{1}{2},\frac{1}{2})} (\theta_1-\theta_2) \operatorname{sgn}(\theta_1), \quad (\theta_1,\theta_2)\in \S^1, 
\end{equation}
where $\alpha>1$. We easily verify that $\Omega \in L^q(\S^1)$. 

We first show that the operator $T_\Omega$ is unbounded from $L^{p_1}(\R^2) \times L^{p_2}(\R^2)$ into $L^p(\R^2)$ if $\frac{1}{p}+\frac{1}{q}>2$. Given a positive integer $N$ greater than $10$, we define the test functions
\begin{equation}\label{E:fg}
f_N(u,v)=\chi_{(0,\frac{1}{2})}(u+v) \chi_{(-N,N)}(v), \quad
g_N(u,v)=\chi_{(0,\frac{1}{2})}(u+v) \chi_{(-N,N)}(u).
\end{equation}
We observe that
\begin{equation}\label{E:omega-f-g}
\|f_N\|_{L^{p_1}(\R^2)} \approx N^{\frac{1}{p_1}},
\quad \|g_N\|_{L^{p_2}(\R^2)} \approx N^{\frac{1}{p_2}}. 
\end{equation}
For $x\in (\frac{N}{2},N)$, $y\in (\frac{N}{2},N)$, we have
\begin{equation}\label{E:tfg}
T_\Omega(f_N,g_N)(x,y)
=\int_{\R^2} f_N(x-s,y) g_N(x,y-t) \frac{\Omega((s,t)/|(s,t)|)}{|(s,t)|^2}\,ds dt.
\end{equation}
We note that no principal value is needed here as the integrand is nonnegative. Indeed, using the support properties of $f_N$, we see that the integration in $s$ is restricted to the set where $s>x+y-1/2>0$, and the function $\Omega((s,t)/|(s,t)|)$ is nonnegative for $s>0$. Therefore, we estimate $T_\Omega(f_N,g_N)(x,y)$ from below by a multiple of
\begin{align*}
&\int_{x+y-\frac{1}{2}}^{x+y} \int_{x+y-\frac{1}{2}}^{x+y}  N^{\frac{1}{q}-2} \left(\log \frac{N}{|s-t|}\right)^{-\frac{\alpha}{q}}\,ds dt \\
&\gtrsim \int_{x+y-\frac{1}{2}}^{x+y-\frac{3}{8}} \int_{x+y-\frac{1}{4}}^{x+y}  N^{\frac{1}{q}-2} \left(\log N\right)^{-\frac{\alpha}{q}}\,ds dt
\approx N^{\frac{1}{q}-2} (\log N)^{-\frac{\alpha}{q}}.
\end{align*}
Consequently,
\begin{equation}\label{E:TN}
\|T_\Omega(f_N,g_N)\|_{L^p(\R^2)} \gtrsim N^{\frac{2}{p}+\frac{1}{q}-2} (\log N)^{-\frac{\alpha}{q}}. 
\end{equation}
A combination of~\eqref{E:omega-f-g} and~\eqref{E:TN} shows the failure of the $L^{p_1}(\R^2) \times L^{p_2}(\R^2) \rightarrow L^p(\R^2)$ boundedness of $T_\Omega$ when $\frac{1}{p}+\frac{1}{q}>2$. 

We next construct a more delicate example to show that $T_\Omega$ is unbounded from $L^{p_1}(\R^2) \times L^{p_2}(\R^2)$ into $L^p(\R^2)$ also in the limiting case $\frac{1}{p}+\frac{1}{q} = 2$.  We will make use of the same function $\Omega$ as before and of the sequence of test functions
\begin{equation}\label{E:fn}
f_N(u,v)=|u+v|^{-\frac{1}{p_1}} \left(\log \frac{1}{|u+v|}\right)^{-\frac{\alpha}{p_1}} \chi_{(0,\frac{1}{2})}(u+v) \chi_{(-N,N)}(v),
\end{equation}
\begin{equation}\label{E:gn}
g_N(u,v)=|u+v|^{-\frac{1}{p_2}} \left(\log \frac{1}{|u+v|}\right)^{-\frac{\alpha}{p_2}} \chi_{(0,\frac{1}{2})}(u+v) \chi_{(-N,N)}(u).
\end{equation}
Here, again, $\alpha>1$ and $N$ is a positive integer greater than $10$. 
We easily verify that
\[
\|f_N\|_{L^{p_1}(\R^2)} \approx N^{\frac{1}{p_1}}, \quad
\|g_N\|_{L^{p_2}(\R^2)} \approx N^{\frac{1}{p_2}}.
\]

Assume that $x \in (-N,N)$, $y\in (-N,N)$ are such that $x+y>4$. As previously, we have the explicit expression~\eqref{E:tfg} and we observe that the integrand in~\eqref{E:tfg} is nonnegative. 
Using the change of variables $u=x+y-s$, $v=x+y-t$, the integral in~\eqref{E:tfg} becomes
\begin{equation}\label{E:integral}
\int_{\R^2} f_N(u-y,y) g_N(x,v-x) \frac{\Omega((x+y-u,x+y-v)/|(x+y-u,x+y-v)|)}{|(x+y-u,x+y-v)|^2}\,du dv.
\end{equation}
We use the support properties of the functions $f_N$ and $g_N$ to observe that the integration in~\eqref{E:integral} is restricted to the set where $u\in (0,1/2)$ and $v\in (0,1/2)$. In this region, we have
\[
|x+y-u| \approx |x+y-v| \approx x+y,
\]
and thus~\eqref{E:integral} is bounded from below by a multiple of 
\begin{equation}\label{E:log}
(x+y)^{\frac{1}{q}-2}\int_0^{\frac{1}{2}} \int_0^{\frac{1}{2}} u^{-\frac{1}{p_1}} \left(\log \frac{1}{u}\right)^{-\frac{\alpha}{p_1}}
v^{-\frac{1}{p_2}} \left(\log \frac{1}{v}\right)^{-\frac{\alpha}{p_2}}
|u-v|^{-\frac{1}{q}} \left(\log \frac{x+y}{|u-v|}\right)^{-\frac{\alpha}{q}}\,du dv.
\end{equation}

We set 
\[
A=\{(u,v)\in (0,1/2)^2:~2u \leq v \leq 4u\}. 
\]
We bound the integral in~\eqref{E:log} from below by the integral over the smaller region $A$ and observe that 
if $(u,v)\in A$ then $u \approx v \approx |u-v|$. Then~\eqref{E:log} is bounded from below by a multiple of
\begin{align*}
&(x+y)^{\frac{1}{q}-2}\int_0^{\frac{1}{8}} \int_{2u}^{4u} u^{-\frac{1}{p}-\frac{1}{q}} \left(\log \frac{x+y}{u}\right)^{-\frac{\alpha}{p}-\frac{\alpha}{q}}\,dv du\\
&\gtrsim (x+y)^{-\frac{1}{p}} \int_0^{\frac{1}{8}} u^{-1} \left(\log \frac{x+y}{u}\right)^{-2\alpha}\,du
\gtrsim (x+y)^{-\frac{1}{p}} (\log(x+y))^{1-2\alpha}. 
\end{align*}
This yields
\[
\|T_\Omega(f_N,g_N)\|_{L^p(\R^2)} \gtrsim N^{\frac{1}{p}} (\log N)^{\frac{1}{p}+1-2\alpha}. 
\]
Choosing $\alpha>1$ such that $\frac{1}{p}+1-2\alpha>0$ contradicts the $L^{p_1}(\R^2) \times L^{p_2}(\R^2) \rightarrow L^p(\R^2)$ boundedness of $T_\Omega$. 
\end{proof}

\begin{remark}
The function $\Omega$ constructed above is supported in a neighborhood of the diagonal $\theta_1=\theta_2$ on $\S^1$. This neighborhood can be made arbitrarily small by replacing the characteristic function of $(-1/2,1/2)$ in~\eqref{E:omega} by the characteristic function of an interval $(-\gamma,\gamma)$ for some small $\gamma>0$, and then replacing the characteristic function of $(0,1/2)$ in~\eqref{E:fg},~\eqref{E:fn} and~\eqref{E:gn} by the characteristic function of $(0,\gamma)$. 

Additionally, we can construct a similar counterexample on a neighborhood of any line $\theta_1=\beta \theta_2$ on $\S^1$, where $\beta \in \R \setminus \{0\}$. We only discuss the (most interesting) limiting case $1/p+1/q=2$. In this situation, the counterexample is constructed for small $\gamma>0$, by setting 
\begin{equation}\label{E:omega-2}
\Omega(\theta_1,\theta_2)=|\theta_1-\beta \theta_2|^{-\frac{1}{q}} \left(\log \frac{1}{|\theta_1-\beta \theta_2|}\right)^{-\frac{\alpha}{q}} \chi_{(-\gamma,\gamma)} (\theta_1-\beta \theta_2) \operatorname{sgn}(\theta_1),
\end{equation}
\begin{equation}\label{E:fn-2}
f_N(u,v)=|u+\beta v|^{-\frac{1}{p_1}} \left(\log \frac{1}{|u+\beta v|}\right)^{-\frac{\alpha}{p_1}} \chi_{(0,\gamma)}(u+\beta v) \chi_{(-N,N)}(\beta v),
\end{equation}
\begin{equation}\label{E:gn-2}
g_N(u,v)=|u+\beta v|^{-\frac{1}{p_2}} \left(\log \frac{1}{|u+\beta v|}\right)^{-\frac{\alpha}{p_2}} \chi_{(0,\gamma)}(u+\beta v) \chi_{(-N,N)}(u),
\end{equation}
where $\alpha>1$ and $N \geq 10$ is an integer. Indeed,
using the change of variables $u=x+\beta y-s$, $v=x+\beta y-\beta t$, the integral in~\eqref{E:tfg} defining $T_\Omega(f_N,g_N)(x,y)$ becomes (a multiple of)
\begin{align*}
\int_{\R^2} &f_N(u-\beta y,y) g_N(x,\beta^{-1}(v-x)) \\
&\times \frac{\Omega((x+\beta y-u,\beta^{-1}(x+\beta y-v))/|(x+\beta y-u,\beta^{-1}(x+\beta y-v))|)}{|(x+\beta y-u,\beta^{-1}(x+\beta y-v))|^2}\,du dv.
\end{align*}
Assuming that $x\in (-N,N)$, $y\in (-N/|\beta|, N/|\beta|)$, $x+\beta y>4/\gamma$, this is bounded from below by a multiple of 
\[
(x+\beta y)^{-\frac{1}{p}} (\log(x+\beta y))^{1-2\alpha},
\]
and the counterexample follows analogously as in the case $\beta =1$. 

The fact that counterexamples can be constructed away from the diagonal $\theta_1=\theta_2$ is in sharp contrast with the case of the one-dimensional Coifman-Meyer operators 
\begin{equation}\label{E:coifman-meyer}
\widetilde{T}_\Omega(f,g)(x)=\int_{\R^2} f(x-s) g(x-t) \frac{\Omega((s,t)/|(s,t)|)}{|(s,t)|^2}\,ds dt.
\end{equation}
The operator~\eqref{E:coifman-meyer}
is bounded in the full quasi-Banach range of exponents $1<p_1,p_2<\infty$, $0<p<\infty$, $\frac{1}{p}=\frac{1}{p_1}+\frac{1}{p_2}$ whenever $\Omega$ is a mean zero function belonging to $L^q(\S^1)$ for some $q>1$ and supported away from the diagonal $\theta_1=\theta_2$, see~\cite{HLS}. 
\end{remark}


\end{document}